\newtheorem{theorem}{Theorem}[section]
\newtheorem*{theorem*}{Theorem}
\newtheorem{lemma}[theorem]{Lemma}
\newtheorem{proposition}[theorem]{Proposition}
\newtheorem{corollary}[theorem]{Corollary}
\newtheorem{definition}[theorem]{Definition}
\newtheorem*{definition*}{Definition}
\newtheorem*{remark}{Remark}
\newcommand{\E}{\mathbb{E}}
\newcommand{\N}{\mathbb{N}}
\renewcommand{\P}{\mathbb{P}}
\newcommand{\R}{\mathbb{R}}
\newcommand{\Z}{\mathbb{Z}}
\newcommand{\1}{1\hspace{-1.3mm}1}
\newcommand{\Var}{\operatorname{Var}}
\newcommand{\Cov}{\operatorname{Cov}}
\definecolor{myblue}{rgb}{0,0.5,1}
\definecolor{mypink}{rgb}{1,0,1}
\definecolor{mygreen}{rgb}{0,0.6,0}
\def\parsept#1#2#3{%
    \def\nospace##1{\zap@space##1 \@empty}%
    \def\rawparsept(##1,##2){%
        \edef#1{\nospace{##1}}%
        \edef#2{\nospace{##2}}%
    }%
    \expandafter\rawparsept#3%
}
\begin{document}

\title{Front evolution of the Fredrickson-Andersen one spin facilitated model%\footnote{This work has been supported by the ERC Starting Grant 680275 MALIG and by the ANR projects LSD (ANR-15-CE40-0020), MALIN (ANR-16-CE93-0003) and PPPP (ANR-16-CE40-0016)}
}

\author{Oriane Blondel\footnote{Univ Lyon, CNRS, Université Claude Bernard Lyon 1, UMR5208, Institut  Camille Jordan, F-69622 Villeurbanne, France. {\it blondel@math.univ-lyon1.fr}}, \, Aurelia Deshayes\footnote{Laboratoire de Probabilités, Statistiques et Modélisation, UMR8001, Université Paris Diderot,  Sorbonne Paris Cité, CNRS, F-75013 Paris, France. {\it deshayes@lpsm.paris}} \, and Cristina Toninelli\footnote{Laboratoire de Probabilités, Statistiques et Modélisation, UMR8001, Université Paris Diderot,  Sorbonne Paris Cité, CNRS, F-75013 Paris, France. {\it cristina.toninelli@upmc.fr}}}

%\thanks{This work has been supported by the ERC Starting Grant 680275 ``MALIG'', ANR ``LSD'', ANR ``MALIN'' and ANR-16-CE40-0016 ``PPPP''}

%\subjclass[2010]{Primary {60K35}, secondary 60J27}

%\keywords{Kinetically constrained models, invariant measure, coupling, contact process}

\maketitle

\begin{abstract}
The Fredrickson-Andersen one spin facilitated model (FA-1f) on $\mathbb Z$ belongs to the class of kinetically constrained spin models (KCM).
Each site refreshes with rate one its occupation variable to empty (respectively occupied) with probability $q$ (respectively $p=1-q$),
provided at least one nearest neighbor is empty. 
Here, we study the non equilibrium dynamics of FA-1f started from a configuration entirely occupied on  the left half-line and focus on the evolution of the front, namely the position of the leftmost zero.  We prove, for $q$ larger than a threshold $\bar q<1$, a law of large numbers and a central limit theorem for the front, as well as the convergence to an invariant measure of the law of the process seen from the front.

\end{abstract}

\noindent\textit{Keywords :} Kinetically constrained models, invariant measure, coupling, contact process.

\noindent\textit{AMS 2010 subject classifications:} Primary 60K35, secondary 60J27.

\section{Introduction}
Fredrickson-Andersen one spin facilitated model (FA-1f) belongs to the class of kinetically constrained spin models (KCM).
KCM are interacting particle systems on $\mathbb Z^d$ which have been introduced in physics literature in the '80s (see \cite{RS} for a review) to model the liquid glass transition, 
a major open problem in condensed matter physics.
A configuration of a KCM is given by assigning to each vertex $x\in\mathbb Z^d$ an occupation variable 
$\sigma(x)\in\{0,1\}$, which corresponds to an empty or occupied site, respectively. 
The evolution is given by a Markovian stochastic dynamics of Glauber type. With rate one, each vertex updates its occupation variable to occupied or to empty with probability $p\in[0,1]$ and $q=1-p$, respectively, if the configuration satisfies a certain local constraint.
For the FA-1f model the constraint requires at least one empty nearest neighbor.
Since the constraint to update the configuration at $x$ does not depend on $\sigma(x)$, the dynamics satisfies detailed balance w.r.t.~Bernoulli product measure at density $p$. A key issue is to analyze the large time evolution when we start from a distribution different
from the equilibrium Bernoulli measure. Note that, due to the presence of the constraints, FA-1f dynamics is not attractive, so powerful tools like monotone coupling and censoring inequalities cannot be applied.
Furthermore, convergence to equilibrium is not uniform on the initial condition since completely occupied configurations are blocked under the dynamics. Therefore the study of the large time dynamics is particularly challenging.
In~\cite{BCMRT13} a convergence to equilibrium was proven when the starting distribution is such that the mean distance between nearest empty sites is uniformly bounded and the equilibrium vacancy density $q$ is larger than a threshold $1/2$. 
In~\cite{MV} the result was extended to initial configurations 
with at least one zero, provided $q$ is sufficiently near to one.

Here, we consider FA-1f on $\mathbb Z$ starting from a configuration which has a zero at the origin and is completely occupied in the left half line and we study the evolution of the front, namely the position of the leftmost zero. 
A law of large numbers and a central limit theorem (Theorem~\ref{thm:limit}) for the front, as well as the convergence to an invariant measure of the law of the process seen from the front (Theorem~\ref{thm:coupling}) are proven. Our results hold for $q$ sufficiently large, more precisely for $q>\bar q$ where $\bar q$ is related  to the critical parameter in the threshold contact process. We stress that, even though we are in one dimension, proving the ballistic motion of the front is non trivial. Indeed, due to the non attractiveness  of the dynamics, the classic tool of sub-additivity~\cite{Dur80} cannot be used. Obviously, all the following results will also be true for the position of the rightmost zero in the FA-1f starting from a configuration which has a zero at the origin and is completely occupied in the right half-line.  

The motion of the front has been analyzed in \cite{front_east,GLM15} for another one dimensional KCM, the East model, for which the constraint requires the site at the right of $x$ to be empty: ergodicity of the measure seen from the front, law of large numbers, central limit theorem and cutoff results have been established. A key tool for the East model introduced in \cite{AD} and used in \cite{front_east,GLM15} for the study of the front, is the construction of a distinguished zero, a sort of moving boundary which induces local relaxation to equilibrium. This construction relies heavily on the oriented nature of the East constraint and cannot be extended to FA-1f and to generic KCM. Another consequence of the orientation is that the cutoff  result in~\cite{GLM15} follows immediately from the central limit theorem for the front, while for FA-1f it would involve a more complex argument.

A sketch of the main step of our proof follows.
Our first key result is to prove relaxation to equilibrium far behind the front (Theorem~\ref{thm:decorrelation}).
In order to establish this result, we couple FA-1f dynamics with a sequence of threshold contact processes (Lemma~\ref{couplage1}) where zeros flip to ones at rate $p$ without any constraint, and ones flip to zeros at rate $q$ if and only if there is at least one nearest neighbor zero. The first contact process starts with a zero at the position of the front (namely at the origin), then if the contact process dies we restart a new one from the last killed zero. The threshold contact process is attractive, and it is well known that for $q$ above a threshold $\bar q<1$  the front of the process conditioned on survival moves ballistically. Due to the fact that there is no constraint for the contact process for the move $0\to 1$, we can couple FA-1f and contact trajectories in such a way that FA-1f configurations contain more zeros than contact process configurations. Then, we can use the well-known behavior of the contact process (ballistic motion of the front, shape theorem for the coupled zone) to guarantee enough zeros behind the front of FA-1f. This will be the work in Section~\ref{section:couplingfapc}. The construction is illustrated by the first simulation in Figure~\ref{fig:simu}; we can see on the second one that the ballistic behavior of the FA-1f front seems still valid for some $q\leq \bar q$ but in this regime, the contact process does not give us any useful information. Afterward, we apply the techniques of~\cite{BCMRT13} (Corollary~\ref{cor:relaxation}) to prove relaxation to equilibrium using these zeros in Section~\ref{section:decorrelation}.
Once this result is established, we construct a coupling inspired by \cite{front_east,GLM15} to prove ergodicity of the process behind the front, namely convergence to the unique invariant measure for the process seen from the front (Section~\ref{section:coupling}).
This convergence result allows us to analyze the increments of the front (Section~\ref{section:hypotheses}) and to deduce a law of large numbers. Finally, to study the fluctuations of the front, we generalize the strategy of~\cite{GLM15} which is in turn based on the result of Bolthausen~\cite{bolt} which allows to establish a central limit theorem for random variables which are not stationary but satisfy a proper mixing condition (Theorem~\ref{thm:TCLgal}).

\begin{figure}[hbt]
{\centering
\includegraphics[width=0.45\textwidth,height=0.3\textwidth]{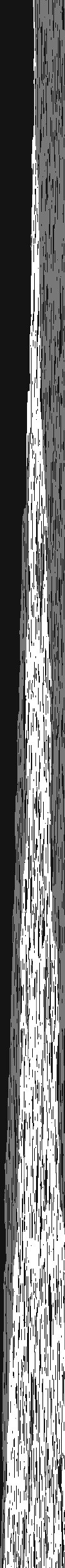}
\includegraphics[width=0.45\textwidth,height=0.3\textwidth]{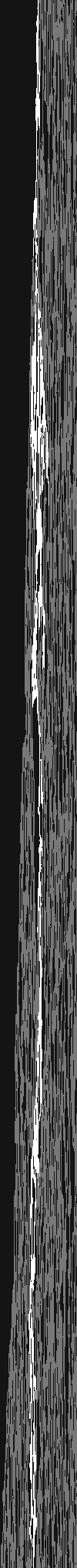}
\par}
\caption{Simulation of FA-1f dynamics (gray points) coupled with restart threshold contact processes (white points). The first one is for $q>\bar q$ and the second one for $q<\bar q$.}
\label{fig:simu}
\end{figure}

\section{Models and main results}
\subsection{The FA-1f process}
Let $\Omega=\{0,1\}^{\Z}$ be the space of configurations and \[
LO_0=\{\sigma\in\Omega:\forall x<0, \sigma(x)=1, \sigma(0)=0\}
\]
be the subspace of configurations with a leftmost zero at the origin. 
For a configuration $\sigma$ for which there exists $x\in\Z$ such that for every $y<x$, $\sigma(y)=1$ and $\sigma(x)=0$, we call $x$ the \emph{front} of configuration $\sigma$ and we denote it by $X(\sigma)$. %\sigma$ such that the front $X(\sigma)=\inf \{x\in\Z: \sigma(x)=0\}$ is well defined. 
For $\Lambda\subset \Z$ and $\sigma\in\Omega$, we denote by $\sigma_{\Lambda}$ the restriction of $\sigma$ to the set $\Lambda$. 
For $\sigma\in\Omega$ and $x\in\Z$, let $\sigma^x$ be the configuration $\sigma$ flipped at site $x$. We denote by $\theta_y$ the space shift operator of vector $y$: $\theta_y\sigma(x)=\sigma(x+y)$. We denote by $\delta^y$ the configuration such that $\delta^y(x)=1$ for all $x\neq y$ and $\delta^y(y)=0$. 

The FA-1f dynamics on $\Omega$ are described by a Markov process with the following generator: for any local function $f$ and any $\sigma\in\Omega$,
\begin{equation}\label{genFA}
\mathcal L f (\sigma)=\sum_{x\in \Z} r(x,\sigma)\left(f(\sigma^x)-f(\sigma)\right)
\end{equation}
where the rate $r(x,\sigma)=c(x,\sigma)(q\sigma(x)+p(1-\sigma(x)))$ is the product between a constraint $c(x,\sigma)$ and a flip rate $q\sigma(x)+p(1-\sigma(x))$. The constraint $c(x,\sigma)=1-\sigma(x-1)\sigma(x+1)$ requires at least one empty neighbor to allow the flip. The parameter $p$ is the rate to update to 1 and the parameter $q=1-p$ is the rate to update to 0. Let $\sigma_t$ be the configuration at time $t$ starting from $\sigma$.  
When there is no confusion, $X(t)$ denotes the front of the configuration $\sigma_t$.

In the following, all constants may depend on $q$.

It is easy to verify that the FA-1f process is reversible w.r.t.\ $\mu:=\mathrm{Ber}(p)^{\otimes \Z}$.

\subsection{An auxiliary process: the threshold contact process}\label{section_pc}
We introduce a threshold contact process where the $0$'s are the infected points and the $1$'s are the healthy ones. Its dynamics on $\Omega$ is given by a Markov process with the following generator: for any local function $f$ and any $\eta\in\Omega$,
 \[
 \mathcal L' f (\eta)=\sum_{x\in \Z} r'(x,\eta)(f(\eta^x)-f(\eta))\]
 where $r'(x,\eta)=c(x,\eta)q\eta(x)+ p(1-\eta(x))$. 
In this model, the constraint only applies to a flip $1\rightarrow 0$. Namely, a $1$ particle can flip to $0$ only if one of its neighbor is $0$; we interpret the state 0 as an infection that propagates by contact. The flip from 0 to 1 is a spontaneous recovery. Let $\eta_t$ be the configuration at time $t$ starting from $\eta$.

We define the extinction time of the threshold contact process by
\begin{align}
\label{extinctiontimePC}
\tau (\eta_{\cdot})=\inf\{t>0, \forall x\in\Z,~\eta_t(x)=1 \},
\end{align}
that is the first time when the threshold contact process has no more zero (\textit{i.e.} infected site). This state is absorbing for the process.
If $\eta_0$ has a finite number of infected sites, $\tau (\eta_{\cdot})$ can be finite. For the FA-1f process, the corresponding extinction time is always infinite because a single zero can not disappear. If the event $\{\tau(\eta_{\cdot})=+\infty\}$ occurs we say that the contact process survives.

To ensure that the threshold contact process survives with positive probability, we need to suppose that $\frac{q}{p}>\lambda^{\scriptscriptstyle{\text{TCP}}}_c(\Z)$ where the critical parameter of the threshold contact process $\lambda^{\scriptscriptstyle{\text{TCP}}}_c(\Z)$ has an approximate value of $1.74$ (\textit{cf.}~\cite{BD88}). In the following we will suppose a stronger condition which is $\frac{q}{p}>2\lambda_c(\Z)$ where $\lambda_c(\Z)$ is the critical parameter of the classical contact process and has an approximate value of $1.65$ (\textit{cf.}~\cite{brower}). This hypothesis allows us to use all the classical contact process estimates (\textit{cf.} Appendix~\ref{annexe_cp}) instead of having to reestablish them %which are not clearly proved 
for the threshold contact process in the intermediate regime $\lambda\in\left(\lambda^{\scriptscriptstyle{\text{TCP}}}_c(\Z),2\lambda_c(\Z)\right]$. So, in this paper, we will consider $q>\overline{q}$ where
\[
\overline{q}=\frac{2\lambda_c(\Z)}{1+2\lambda_c(\Z)}.
\]
This corresponds approximately to taking $q\gtrsim 0.76$ rather than $q\gtrsim 0.63$.

\subsection{Main results}

Now we have the tools to enounce precisely the theorems that we will prove in this paper. The first one is the ergodicity of the process seen from the front. It will be proved thanks to a major coupling in Section~\ref{section:coupling}.
\begin{theorem}
\label{thm:coupling}
Let $q>\overline{q}$. The process seen from the front has a unique invariant measure $\nu$ and, starting from every $\sigma\in LO_0$, it converges in distribution to $\nu$ in the following sense: there exist $d^*>0$ and $c>0$ (independent of $\sigma$) such that for $t$ large enough
\[
\|\tilde{\mu}_t^{\sigma}-\nu\|_{[0,d^*t]}\leq e^{-ce^{(\log t)^{1/4}}},
\]
where $\tilde{\mu}_t^\sigma$ is the distribution of the configuration seen from the front at time $t$ starting from $\sigma$, i.e.\@ $\theta_{X(\sigma_t)}\sigma_t$, and $\|\pi-\pi'\|_\Lambda$ denotes the total variation distance between the marginals of $\pi$ and $\pi'$ on $\Lambda$.
\end{theorem}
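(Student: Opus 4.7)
The plan is to establish the quantitative convergence by coupling two copies $(\sigma_t)$ and $(\sigma'_t)$ of the FA-1f dynamics started from arbitrary initial conditions $\sigma, \sigma'\in LO_0$, and then bounding the probability that their configurations seen from the respective fronts disagree on $[0,d^*t]$. Taking $\sigma'$ distributed according to any candidate invariant measure immediately forces uniqueness of $\nu$; existence then follows either by tightness of the laws $\tilde\mu_t^\sigma$ on finite windows or by a Cauchy-type argument applied to the corresponding marginals.

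The coupling is built in two stages. The first relies crucially on Theorem~\ref{thm:decorrelation}: at time $t/2$, each of $\sigma_{t/2}$ and $\sigma'_{t/2}$ is close in total variation to $\mu=\mathrm{Ber}(p)^{\otimes\mathbb Z}$ on a window $W$ of length of order $d^*t$ placed at a suitable macroscopic distance behind its front. Since $\mu$ is a product measure, the optimal coupling on $W$ arranges that $\sigma_{t/2}\equiv\sigma'_{t/2}$ on $W$ with error bounded by the relaxation rate. The second stage runs both processes for $s\geq t/2$ using the same graphical representation: agreement on $W$ is then propagated, with high probability, both outwards in space up to the current fronts and forwards in time, so that $\theta_{X(\sigma_t)}\sigma_t$ and $\theta_{X(\sigma'_t)}\sigma'_t$ end up coinciding on the required window.

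To reach the stretched-exponential rate $e^{-ce^{(\log t)^{1/4}}}$, I would iterate the above scheme on a geometrically increasing sequence of time scales, each iteration extending the coupled window and composing the failure probabilities; optimization over $O((\log t)^{1/4})$ such iterations produces exactly the claimed form. This bootstrap is modelled on the one used in \cite{GLM15} for the East process, but with the input from Theorem~\ref{thm:decorrelation} replacing the distinguished-zero relaxation estimate used there.

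The main obstacle, as the authors already emphasize in the introduction, is that FA-1f lacks the oriented distinguished-zero construction that drives the East-model coupling. The analogue here must rely on the contact-process comparison of Section~\ref{section:couplingfapc}, which guarantees a positive density of zeros in every mesoscopic region between $W$ and the front; these zeros play the role of synchronization points allowing agreement to be transported, via purely local symmetric moves, from the equilibrium region all the way up to the front. Quantifying the probability that the graphical coupling propagates this agreement without defect (and controlling the joint fluctuations of the two fronts during the $t/2$ propagation phase, using the ballistic estimate for the front furnished by the supercritical contact process) is where the bulk of the technical work sits, and is exactly where the hypothesis $q>\bar q$ enters.
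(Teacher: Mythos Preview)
Your overall architecture is right: couple two copies from arbitrary $\sigma,\sigma'\in LO_0$, use Theorem~\ref{thm:decorrelation} to match them on a window $W$ far behind the front, and then try to extend the matching up to the front. But the second stage as you describe it does not work, and the missing idea is precisely the heart of the paper's argument.

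Running the two processes with the same graphical representation after matching on $W$ does \emph{not} propagate agreement toward the front. The window $W$ sits at macroscopic distance (of order $\overline v\Delta_1$) behind the front, and on the interval between the front and $W$ the two configurations are a priori completely unrelated; under the basic coupling this discrepancy simply persists (and its boundary eats into $W$ at speed at most $\overline v$). The zeros guaranteed by the contact-process comparison do not by themselves ``transport agreement'' across that gap: they only ensure that the dynamics is not locally frozen, not that the two copies coalesce. What is needed is a genuine mechanism to force matching on the near-front interval $[1,x^*]$ for some $x^*$ at the left edge of $W$, and this is where the paper introduces the key trick you are missing. One locates a \emph{common} zero $x^*$ of the two (already matched on $W$) configurations, then conditions on the rare event (probability $e^{-2\Delta_2}$) that the Poisson clocks at $x^*$ and at the origin do not ring during a short time $\Delta_2$. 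On that event the dynamics on $[1,x^*-1]$ is an autonomous FA-1f process with \emph{zero boundary condition on both sides}, and the finite-volume relaxation estimate (Corollary~\ref{cor:relaxation}, equation~\eqref{eq:relaxation_finitevolume}) lets a maximal coupling match the two copies there with error $e^{-c\sqrt{\Delta_2}}$.

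Because this near-front matching step succeeds only with probability $\sim e^{-2\Delta_2}$, a single attempt is useless; the paper repeats the pair (Step~1: rematch on $W$; Step~2: attempt the near-front trick) $N\asymp t/(\Delta_1+\Delta_2)$ times, obtaining the recursion $p_{n+1}\le Ce^{-c\sqrt{\Delta_1}}+p_n(1-\tfrac12 e^{-2\Delta_2})$. The specific choices $\Delta_1=e^{(\log t)^{1/4}}$ and $\Delta_2=(\log t)^{3/4}$ balance the two terms and produce the rate $e^{-ce^{(\log t)^{1/4}}}$. This is not an iteration over geometrically increasing time scales extending the coupled window, as you suggest; it is many independent retries of a low-probability event on a \emph{fixed} pair of time scales. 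The East-model scheme from \cite{GLM15} that you invoke relies on the oriented distinguished zero to push equilibrium from the front outward, which is exactly the direction opposite to what you need here and has no analogue in the symmetric FA-1f dynamics.
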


\begin{remark} For every $\alpha >0$, the velocity of convergence $e^{-e^{c(\log t)^{1/4}}}$ is less good than $e^{-t^{\alpha}}$ (which was the velocity obtained in the East case by~\cite{GLM15} for some $\alpha<1/2$) but it is better than $e^{-(\log t)^{1/\alpha}}$ (and in particular better than any polynomial velocity).
\end{remark}

The second one is a law of large numbers and a central limit theorem for the front. 
\begin{theorem}\label{thm:limit}
Let $q>\overline{q}$. There exists $s=s(q)$ and $v=v(q)$ such that for all $\sigma \in LO_0$
\begin{eqnarray}
\label{eq:lgn}
\frac{X(\sigma_t)}{t} & \xrightarrow[t\to\infty]{} & v ~~~~~\P_{\sigma}-\text{almost surely}, \\
\label{eq:tlc}
\frac{X(\sigma_t)-vt}{\sqrt{t}} & \xrightarrow[t\to\infty]{d} & \mathcal{N}(0,s^2) ~~~~~ w.r.t.~\P_{\sigma},
\end{eqnarray}
where $v=p\cdot\nu[\tilde{\sigma}(1)=0]-q$ is negative.
\end{theorem}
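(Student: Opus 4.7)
The plan is to decompose $X(t)$ as a martingale plus a drift integral, control the drift via the ergodic statement of Theorem~\ref{thm:coupling}, and then derive~\eqref{eq:lgn} from a martingale law of large numbers and~\eqref{eq:tlc} from Bolthausen's non-stationary CLT (Theorem~\ref{thm:TCLgal}).

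Every jump of $X$ has modulus $1$: the site $X(t)-1$ flips to $0$ at rate $q$ (its zero neighbor at $X(t)$ already satisfies the constraint), sending the front one step left, while $X(t)$ itself can flip to $1$ only when $\sigma_t(X(t)+1)=0$, at rate $p$, after which the new leftmost zero sits at $X(t)+1$. Writing $\tilde\sigma_s:=\theta_{X(s)}\sigma_s$ and $g(\tilde\sigma):=p\,\1\{\tilde\sigma(1)=0\}-q$, Dynkin's formula gives
\[
X(t)=M(t)+\int_0^t g(\tilde\sigma_s)\,ds,
\]
with $M$ a càdlàg martingale of unit jumps and predictable quadratic variation bounded by $(p+q)t$. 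The martingale LLN yields $M(t)/t\to 0$ a.s. For the drift, I would slice $[0,t]$ into blocks of length $T$, apply the Markov property at each block's left endpoint, and use Theorem~\ref{thm:coupling} to substitute $\nu$ for the conditional law of the configuration seen from the front, at TV cost $\exp(-ce^{(\log s)^{1/4}})$. A standard blocking plus Borel--Cantelli argument then gives $\tfrac{1}{t}\int_0^t g(\tilde\sigma_s)\,ds\to\nu(g)=p\,\nu[\tilde\sigma(1)=0]-q=:v$ almost surely, which proves~\eqref{eq:lgn}. Strict negativity $v\le p-q<0$ is immediate from $\nu[\tilde\sigma(1)=0]\le 1$ and $q>\bar q>1/2$.

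For~\eqref{eq:tlc}, I would form the centered unit-time increments $\Xi_k:=X(k+1)-X(k)-v$ and apply Bolthausen's theorem (Theorem~\ref{thm:TCLgal}) to the sequence $(\Xi_k)_k$. This requires (i) uniform $L^{2+\delta}$ bounds on the $\Xi_k$, and (ii) a quantitative decorrelation estimate between the $\sigma$-algebras generated by $\Xi_k$ and $\Xi_{k+\ell}$ for large $\ell$. Ingredient (i) is straightforward because $|\Xi_k|$ is pathwise dominated by a Poisson variable of parameter $p+q$ (the total jump count in one unit of time) plus $|v|$. The main obstacle is (ii): I would couple two instances of the process at time $k$ via Theorem~\ref{thm:coupling} so that the configurations seen from the front at time $k+\ell$ agree on a large window in total variation with doubly-exponentially small error, which translates into a covariance bound $|\Cov(f(\Xi_k),h(\Xi_{k+\ell}))|\le\exp(-ce^{(\log\ell)^{1/4}})$ for bounded $f,h$. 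This decay is more than summable in $\ell$ and comfortably verifies the mixing hypothesis of Theorem~\ref{thm:TCLgal}. The asymptotic variance $s^2$ is then identified as the limit of $\Var(X(n))/n$, which exists by the resulting asymptotic stationarity of $(\Xi_k)$ under $\nu$.
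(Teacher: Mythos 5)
Your overall strategy is sound, and your route to the law of large numbers is genuinely different from the paper's: you write $X(t)=M(t)+\int_0^t g(\tilde{\sigma}_s)\,ds$ with $g(\tilde{\sigma})=p\1\{\tilde{\sigma}(1)=0\}-q$ and treat the drift as an additive functional of the environment seen from the front, whereas the paper works with the unit-time increments $\xi_n=X(\sigma_n)-X(\sigma_{n-1})$ and runs a second-moment/Borel--Cantelli argument on $\sum\xi_n$ (the paper only uses the identity $\frac{d}{dt}\E_\sigma[X(\sigma_t)]=p\,\tilde\mu^\sigma_t(\tilde{\sigma}(1)=0)-q$ to identify $v$). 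Your jump-rate analysis of the front and the identification $v=\nu(g)$ are correct, and the martingale part is harmless. The one place where you are too quick is the almost-sure convergence of $\frac1t\int_0^t g(\tilde{\sigma}_s)\,ds$: Theorem~\ref{thm:coupling} only gives convergence of the \emph{one-time marginals} to $\nu$ in total variation, which controls expectations but not almost-sure time averages. Your ``standard blocking plus Borel--Cantelli'' still needs a quantitative covariance bound $\Cov_\sigma(g(\tilde{\sigma}_s),g(\tilde{\sigma}_u))$ for $u-s$ large, obtained by the Markov property at time $s$ plus the uniformity in the initial configuration of Theorem~\ref{thm:coupling}; this is precisely the content of the paper's Lemmas~\ref{momentxinu}--\ref{covxinu}, and you should state it as part of the proof rather than fold it into ``standard''.

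For the CLT the substantive gap is in what you claim suffices to invoke Theorem~\ref{thm:TCLgal}. That theorem is not driven by uniform moments plus pairwise decorrelation of $\Xi_k$ and $\Xi_{k+\ell}$: its hypotheses also require comparison with the stationary law $\nu$, namely the single-time estimate \eqref{hyp_esp}, the two-time estimate \eqref{hyp_esp2}, and crucially the uniform \emph{block} estimate \eqref{hyp_espF}, i.e.\ $\sup_\sigma|\E_\sigma[F(\xi_k,\ldots,\xi_{k+n-1})]-\E_\nu[F(\xi_1,\ldots,\xi_n)]|\leq C\|F\|_\infty\Phi(k)$ for all bounded $F$ and all $n\lesssim k$; this is the paper's Lemma~\ref{momentgal} and is used in an essential way in the proof of Theorem~\ref{thm:TCLgal} (e.g.\ in the exponential-moment Lemma~\ref{lem:moment_expo_sn} and in Lemma~\ref{lem:combi}). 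A pairwise covariance bound alone would not ``comfortably verify'' these hypotheses. Relatedly, you gloss the step that converts the window statement of Theorem~\ref{thm:coupling} into statements about increments: $\xi_1$ started from $\sigma'$ depends in principle on all of $\sigma'$, so one must first replace $\sigma'$ by its truncation $\Phi_t(\sigma')$ (equal to $\theta_{X(\sigma')}\sigma'$ on $[0,d^*t]$ and to $1$ elsewhere) and pay a finite-speed-of-propagation error via Lemma~\ref{finitespeedpropagation}; this is why the condition $k\geq c^* n$ with $c^*=d^*/\overline{v}$ appears. None of this breaks your plan---the coupling you describe, combined with finite speed of propagation, does yield the block estimates---but these verifications are the actual work of Section~\ref{section:hypotheses}, and as written your proposal asserts rather than proves them.
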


\subsection{Graphical construction and basic coupling}\label{graphical}

We briefly recall here the graphical construction for the FA-1f and contact process, which allows to construct the two processes on the same probability space and to compare them pointwise. Troughout the text, $\N$ denotes the set of positive integers. Let $\mathcal{C}=\left(B_{x,n}, E_{x,n}\right)_{x\in\Z,n\in\N}$ be a collection of independent random variables, where for all $(x,n)\in\Z\times\N$, $B_{x,n}\sim\mathrm{Ber}(p)$, $E_{x,n}\sim\mathrm{Exp}(1)$. These variables are interpreted as follows: with each site $x\in\Z$ we associate a sequence of exponential clock rings given by the $\sum_{k=1}^nE_{x,k},\ n\in\N$, and with the clock ring at time $\sum_{k=1}^nE_{x,k}$ we associate the Bernoulli variable $B_{x,n}$. Starting from configurations $\sigma,\eta\in\Omega$, we construct a FA-1f process $(\sigma_t)_{t\geq 0}$ and a (threshold) contact process $(\eta_t)_{t\geq 0}$ using the same collection $\mathcal{C}$. When a clock rings at $x$, we update each process at this site to the associated Bernoulli variable, provided the constraint of the process is satisfied (for instance, if the Bernoulli variable is $1$, the contact process automatically updates, while the FA-1f needs at least one empty neighbor). We call $(\sigma_t,\eta_t)_{t\geq 0}$ \emph{the basic coupling started from $(\sigma,\eta)$ using $\mathcal{C}$.} We denote by $\P,\E$ the associated probability and expectation. This probability space allows us to construct the processes with any initial configuration simultaneously. We will denote by $\P_{\sigma,\eta}$ (respectively $\P_{\sigma}$, $\P_{\eta}$) the associated probability to the initial configuration $(\sigma,\eta)$ (respectively the projected probabilities).

The following property will be our main tool to guarantee a minimal quantity of zeros in the FA-1f process.
\begin{lemma}\label{order}
If $\sigma\leq\eta$ (pointwise), with the above construction, we have a.s. 
\begin{equation*}
\forall t\geq 0,\quad\sigma_t\leq\eta_t.
\end{equation*}
\end{lemma}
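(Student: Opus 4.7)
The plan is a direct verification that the pointwise order $\sigma_t \leq \eta_t$ is preserved at every update of the graphical construction. Between clock rings nothing happens, so I only need to examine what occurs at a single ring at site $x$ and time $t$ with Bernoulli value $B = B_{x,n}$, under the inductive hypothesis $\sigma_{t^-} \leq \eta_{t^-}$.

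The key observation is that the constraint $c(y, \omega) = 1 - \omega(y-1)\omega(y+1)$ is monotone in added zeros: $\sigma_{t^-} \leq \eta_{t^-}$ implies $c(x, \sigma_{t^-}) \geq c(x, \eta_{t^-})$. The case analysis is then short. If $B = 1$, the TCP updates unconditionally, so $\eta_t(x) = 1 \geq \sigma_t(x)$ regardless of what FA-1f does. If $B = 0$ and $c(x, \eta_{t^-}) = 1$, then $c(x, \sigma_{t^-}) = 1$ as well, so both processes are allowed to update and $\sigma_t(x) = \eta_t(x) = 0$. If $B = 0$ and $c(x, \eta_{t^-}) = 0$, then the TCP does nothing and $\eta_t(x) = \eta_{t^-}(x)$; the FA-1f process either does nothing, in which case $\sigma_t(x) = \sigma_{t^-}(x) \leq \eta_{t^-}(x) = \eta_t(x)$, or it flips to $0$, in which case $\sigma_t(x) = 0 \leq \eta_t(x)$. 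At every other site $y \neq x$, nothing changes at time $t$, so the inductive bound persists coordinatewise.

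To turn this local check into a global statement I would invoke the Harris construction: for any bounded $\Lambda \subset \Z$ and any $T > 0$, the subset of space-time clock rings in $\mathcal{C}$ that can influence the joint configuration on $\Lambda$ up to time $T$ almost surely decomposes into finite clusters, so the coupled evolution on $\Lambda$ up to time $T$ is the result of finitely many successive updates, at each of which the case analysis above applies. Sending $\Lambda \uparrow \Z$ and $T \uparrow \infty$ yields the lemma. There is no serious obstacle here; this is a standard attractiveness check, and its role in the paper is to serve as the bridge that lets contact-process survival estimates be transferred into lower bounds on the density of zeros for FA-1f, as used in Section~\ref{section:couplingfapc}.
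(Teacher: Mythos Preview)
Your proposal is correct and is precisely the explicit verification that the paper's one-line proof (``It is not difficult to check that every possible transition preserves the order'') leaves to the reader. The case analysis and the remark on localizing via the Harris construction are both sound; there is nothing to add.
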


%namely, the FA-1f process has always more 0's particles than the contact process.

\begin{proof}
It is not difficult to check that every possible transition preserves the order. 
\end{proof}

It will also be useful to define a (space) shift operation on the collection $\mathcal{C}$ by 
\[
\theta_y\mathcal{C}=\left(B_{y+x,n}, E_{y+x,n}\right)_{x\in\Z,n\in\N}.
\]

To quantify the amount of zeros we will introduce the following event. A $0$-gap is an interval without zeros. 

\begin{definition}\label{eventH} Let $L,M,l$ be three positive numbers. 
\begin{align*}
\mathcal{H}(L,L+M,l)&=\{\text{There is no 0-gap of length $l$ in the box }[L,L+M]\}\\
&=\{\sigma:\forall y\in[L ,L+M-l+1],~ \exists z\in [y, y+l-1],~ \sigma(z)=0\}.
\end{align*}
\end{definition}
If $l\leq M+1$, it is also equivalent to say that there is at least one zero in the box and the maximal distance between two zeros in the box $[L,L+M]$ is less than $l$.

\subsection{Finite speed propagation}%AT MOST LINEAR GROWTH
Classically, this type of graphical construction implies finite speed of propagation in the following sense. For $x,y\in\Z$ and $t>0$, we denote by 
\begin{align*}
F(x,y,t)&=\{\text{before time }t\text{ there is a sequence of successive rings linking }x\text{ to }y\},\\
\tilde{F}(x,y,t)&=\{\exists~ z\text{ between }x\text{ and }y\text{ s.t.\ }F(x,z,t)\cap F(y,z,t)\}.
\end{align*}
Above, a sequence of successive rings linking $x$ and $y$ means that (if e.g.\ $x<y$) there is a clock ring at site $x$, and then at site $x+1$ and so on until $y$ is reached. Standard results on Poisson point processes imply the following.
\begin{lemma}\label{finitespeedpropagation}
There exists a constant $\overline{v}$ (which is bigger than 1) such that, for every $t$ and $x,y\in\Z$ such that $|x-y|\geq \overline{v} t$ then
\begin{align*}
\P(F(x,y,t))\leq \P({\tilde{F}}(x,y,t))\leq e^{-|x-y|}.
\end{align*} 
\end{lemma}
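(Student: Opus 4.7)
The first inequality $\P(F(x,y,t)) \le \P(\tilde F(x,y,t))$ will follow immediately from the inclusion $F(x,y,t) \subseteq \tilde F(x,y,t)$: take the meeting point $z = y$ in the definition of $\tilde F$ and note that $F(y,y,t)$ is the sure event (a trivial empty sequence). The substance of the lemma is the second inequality, which I plan to prove by a routine Poisson large-deviation estimate.

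The first step will be to observe that, by the memorylessness of the independent Poisson clocks, the waiting time for a successive chain of rings from $x$ to $z$ is exactly a sum of $|x-z|+1$ i.i.d.\ Exp$(1)$ random variables. In particular, writing $N_t$ for a Poisson$(t)$ variable, one has $\P(F(x,z,t)) = \P(N_t \ge |x-z|+1)$. To control $\tilde F(x,y,t)$, I will assume WLOG $x < y$, set $n = |x-y|$, and apply a union bound over the meeting point $z \in \{x, \ldots, y\}$. Since $F(x,z,t)$ and $F(y,z,t)$ share only the clock at $z$, the crude estimate
\begin{equation*}
\P\bigl(F(x,z,t) \cap F(y,z,t)\bigr) \le \min\bigl(\P(F(x,z,t)),\, \P(F(y,z,t))\bigr)
\end{equation*}
will be sufficient. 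For every $z \in [x,y]$, at least one of the two legs has length $\ge n/2$, so the minimum is bounded by $\P(N_t \ge \lceil n/2 \rceil + 1)$.

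The final step will be the Chernoff bound $\P(N_t \ge k) \le e^{-t}(et/k)^k$. Applied with $k = \lceil n/2\rceil + 1$ and the hypothesis $n \ge \overline v \, t$, it yields at most $(2e/\overline v)^{n/2+1}$ per meeting point, so that summing the at most $n+1$ terms gives
\begin{equation*}
\P(\tilde F(x,y,t)) \le (n+1)(2e/\overline v)^{n/2+1},
\end{equation*}
which is dominated by $e^{-n}$ for every $n \ge 0$ as soon as $\overline v$ is chosen as a sufficiently large absolute constant (for instance $\overline v \ge 2e^5$), in particular larger than one. No real obstacle is anticipated; the only mild subtlety is the dependence between $F(x,z,t)$ and $F(y,z,t)$ through the shared clock at $z$, and the trivial $\min$ inequality bypasses any need for delicate decoupling.
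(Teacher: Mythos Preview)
Your argument is correct and is precisely the kind of standard Poisson large-deviation computation the paper has in mind: the paper does not actually prove this lemma but simply states that ``standard results on Poisson point processes imply the following.'' Your Chernoff bound with a union bound over the meeting point is the natural way to make this precise, and the choice $\overline{v}\ge 2e^5$ indeed suffices.

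One tiny remark: you justify $F(x,y,t)\subseteq\tilde F(x,y,t)$ by declaring $F(y,y,t)$ to be the sure event, while later your formula $\P(F(x,z,t))=\P(N_t\ge |x-z|+1)$ implicitly treats $F(y,y,t)$ as ``there is a ring at $y$ before time $t$.'' This inconsistency is harmless, since under either convention $F(x,y,t)\subseteq F(y,y,t)$ (a successive chain from $x$ to $y$ ends with a ring at $y$), and the inclusion $F(x,y,t)\subseteq \tilde F(x,y,t)$ follows just the same.
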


This has immediate implications for the maximal velocity of the front in the FA-1f process and for the propagation of the contact process.

\begin{corollary}
For all $\sigma\in LO_0$, $c\geq \overline{v}$,
\begin{align}
\P_\sigma(|X(\sigma_t)|\geq ct)&\leq e^{-ct},\label{atmostlinearfa}\\
\text{and}\quad\P_{\delta^0}(\exists y\  \colon|y|\geq ct\text{ and } \eta_t(y)=0) &\leq e^{-c t}.\label{atmostlinearpc}
\end{align}
\end{corollary}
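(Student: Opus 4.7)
The plan is to derive both bounds from the same basic observation: in both the FA-1f and the contact process, the transition $1 \to 0$ at a site requires a $0$ already present at one of its neighbours. Hence a $0$ appearing at a site initially carrying a $1$ must be the terminal point of a chain of successive $0$-creations, which in turn forces a sequence of consecutive clock rings --- exactly the event $F$ of Lemma~\ref{finitespeedpropagation}.

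I would first handle~\eqref{atmostlinearpc}, which is the cleanest instance of this principle. Starting from $\delta^0$, the unique initial zero sits at the origin, so the event $\{\eta_t(y) = 0\}$ for some $y$ with $|y| \geq ct$ forces $F(0,y,t)$ to occur. Since $c \geq \overline{v}$, Lemma~\ref{finitespeedpropagation} gives $\P(F(0,y,t)) \leq e^{-|y|}$, and summing the geometric tail $\sum_{y:\,|y|\geq ct} e^{-|y|}$ yields the claim (the leftover multiplicative constant is absorbed by a slight weakening of~$c$, or simply folded into an $O(e^{-ct})$ bound).

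For~\eqref{atmostlinearfa}, I would split $\{|X(\sigma_t)|\geq ct\}$ according to the sign of $X(\sigma_t)$. The case $X(\sigma_t) \leq -ct$ is essentially identical to the CP argument: since $\sigma \in LO_0$ has only ones to the left of the origin, a zero appearing at $y \leq -ct$ must come from a chain of $0$-creations propagating leftward from~$0$, and the sum $\sum_{y\leq -ct}\P(F(0,y,t))$ controls everything. For $X(\sigma_t) \geq ct$, I would exploit the fact that the front changes by $\pm 1$ only: since the front at $f$ always satisfies $\sigma(f-1)=1$ by definition, the only way to remove the frontal zero is a clock ring at $f$ whose constraint is met by a zero at $f+1$. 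Consequently, the front reaching position $\geq ct$ forces the occurrence of clock rings at $0,1,2,\dots,\lceil ct \rceil -1$ in strictly increasing chronological order, i.e.\ an event of the form $F(0,y,t)$ with $y\geq \lceil ct\rceil - 1$. Applying Lemma~\ref{finitespeedpropagation} and union bounding finishes the argument.

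The main point requiring care is the rightward case of~\eqref{atmostlinearfa}: one must verify that the successive rightward displacements of the front genuinely assemble into a single chronologically ordered chain of consecutive clock rings compatible with the definition of $F$, rather than a more general superposition of short chains. Once this encoding is set up, the remaining estimates reduce to routine geometric sums.
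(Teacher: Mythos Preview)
Your argument is correct and is precisely the kind of reasoning the paper has in mind: the corollary is stated without proof, merely as ``immediate implications'' of Lemma~\ref{finitespeedpropagation}, and your chain-of-$0$-creations argument is the natural way to make this precise. Your identification of the rightward FA-1f case as the only subtle point is accurate, and your resolution (record the first hitting times $T_k$ of each level $k$; at time $T_k$ there is a ring at $k-1$; the $T_k$ are increasing) is exactly right.

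Two minor remarks. First, the union bounds over all $y$ are unnecessary: since both the CP infection front and the FA-1f front move by $\pm 1$, the event $\{\exists\, y:|y|\geq ct,\ \eta_t(y)=0\}$ (resp.\ $\{|X(\sigma_t)|\geq ct\}$) is already contained in $F(0,\lceil ct\rceil,t)\cup F(0,-\lceil ct\rceil,t)$, so a single application of the lemma per side suffices. Second, as you note, one picks up a harmless multiplicative constant; the paper's statement with a bare $e^{-ct}$ is not meant literally (throughout the paper these bounds are only ever used as $O(e^{-ct})$, cf.\ the proofs of Corollary~\ref{atleastlinearFA} and Theorem~\ref{thm:decorrelation}).
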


\section{Relaxation in FA-1f}

In this section, we collect results about the relaxation to equilibrium (\textit{i.e.}\ to $\mu=\mathrm{Ber}(p)^{\otimes\Z}$) of the FA-1f process. These can be deduced from the proofs in \cite{BCMRT13} as explained below.

\begin{proposition}\label{relaxation_lemma}
Let $q>1/2$ and $t>0$. Let $K>0$, $f$ a bounded function with support contained in $[-K,K]$ such that $\mu(f)=0$. 
Define $\Lambda:=[-K-\overline{v}t,K+\overline{v}t]\cap\Z$. Decompose $\Lambda=\sqcup_{i=1}^n\Lambda_i$ with $\Lambda_i$ disjoint intervals in $\Z$ such that for all $i=1,\ldots,n$ $|\Lambda_1|\leq|\Lambda_i|\leq 2|\Lambda_1|$. Choose $\theta>1$ satisfying $\frac{\theta}{\theta+1}<q$. Then there exists $c=c(q,\theta)>0$ such that for all initial configuration $\sigma\in\mathcal{H}(-K-\overline{v}t,K+\overline{v}t,|\Lambda_1|/8)$ , % le 8 peut etre remplacé par un truc > 4
\begin{multline*}
|\E_\sigma[f(\sigma_t)]|\leq c\|f\|_\infty\left(\exp\left(-c^{-1}t+c|\Lambda|e^{-\frac{t}{c|\Lambda_1|}}\right)+tn|\Lambda|\theta^{-|\Lambda_1|/4}\left(c+\theta^{|\Lambda_1|/8}\right)\right.\\
\left.+\ ne^{-q|\Lambda_1|}+|\Lambda|e^{-t/3}\vphantom{e^{-\frac{t}{c|\Lambda_1|}}}
\right).
\end{multline*}
%
%There exist $c<\infty$, $\delta>0$ such that for any $\sigma\in\lbrace 0,1\rbrace^{\Z}$ in which the maximal distance between zeros is not greater than $\delta t$ in $\Lambda:=[-K-100t,K+100t]\cap\Z$, we have for all $t\geq 0$
%\begin{equation}
%|\E_\sigma[f(\sigma_t)]|\leq c\|f\|_\infty e^{-t/c}.
%\end{equation}
\end{proposition}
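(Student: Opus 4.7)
The plan is to adapt the relaxation-to-equilibrium argument of \cite{BCMRT13} to a deterministic initial configuration satisfying the density condition $\sigma\in\mathcal{H}$. Three ingredients are combined: a reduction to a finite box via finite speed of propagation, a blockwise spectral-gap estimate for the FA-1f generator conditioned on having at least one zero per block, and a control on the probability that some block becomes entirely filled with $1$'s during $[0,t]$.

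First, applying Lemma~\ref{finitespeedpropagation} restricts attention to the finite box $\Lambda=[-K-\overline{v}t,K+\overline{v}t]\cap\Z$: outside an event of probability $O(|\Lambda|e^{-t/3})$, the value of $f(\sigma_t)$ coincides with that obtained by using only the clock rings inside $\Lambda\times[0,t]$. This accounts for the last term of the bound. Inside $\Lambda$, I would use the decomposition $\Lambda=\sqcup_{i=1}^n\Lambda_i$ and the central input from \cite{BCMRT13}: the FA-1f generator restricted to $\Lambda_i$ and conditioned on having at least one zero has a spectral gap bounded below by a quantity of order $\theta^{-|\Lambda_i|/4}$ provided $q>\theta/(\theta+1)$, established via the bisection/capacity scheme of that paper. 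Combined with a regularization step that converts the deterministic initial condition into a distribution absolutely continuous with respect to $\mu$ after a short time, this yields the first error term $\exp(-c^{-1}t+c|\Lambda|e^{-t/(c|\Lambda_1|)})$, where the inner exponential encodes the cost of controlling the initial Radon--Nikodym density on the $n\leq|\Lambda|/|\Lambda_1|$ blocks in parallel.

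The remaining two terms come from the price of this conditioning. The equilibrium probability that $\Lambda_i$ is entirely filled with $1$'s is at most $p^{|\Lambda_i|}\leq e^{-q|\Lambda_1|}$, which after a union bound over blocks contributes the term $ne^{-q|\Lambda_1|}$. The probability that some $\Lambda_i$ runs out of zeros at some time in $[0,t]$ is controlled by combining the blockwise Poincaré inequality with a union bound over time and blocks, yielding the mixed term $tn|\Lambda|\theta^{-|\Lambda_1|/4}(c+\theta^{|\Lambda_1|/8})$; the hypothesis $\sigma\in\mathcal{H}(\cdot,\cdot,|\Lambda_1|/8)$ is precisely what guarantees that each $\Lambda_i$ initially contains at least eight zeros, ensuring that the conditioning is sharp from the start.

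The main technical obstacle is that FA-1f is not attractive, so there is no monotone coupling with a stationary version and one cannot simply compare $\sigma_t$ to a $\mu$-distributed configuration; following \cite{BCMRT13}, this is circumvented by running the dynamics on configurations with at least one zero per block and bounding the hitting probability of ``empty'' configurations by capacity arguments. The only genuine modification with respect to \cite{BCMRT13} is to allow blocks of uneven sizes $|\Lambda_i|\in[|\Lambda_1|,2|\Lambda_1|]$, which is harmless since all block-dependent estimates are uniform in the size up to a multiplicative constant of $2$, and to check that the derivation depends on $\sigma$ only through the event $\mathcal{H}$, as stated.
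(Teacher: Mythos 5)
Your overall skeleton (finite speed of propagation to pass to the finite box $\Lambda$, then invoking the machinery of \cite{BCMRT13} on the blocks $\Lambda_i$) is the same as the paper's, but there is a genuine gap at the one point where this proposition actually requires an argument beyond citing \cite{BCMRT13}: the control, uniformly over $s\in[0,t]$, of the probability that some block $\Lambda_i$ loses its zeros, i.e.\ $\P_{\sigma_\Lambda}(\sigma^\Lambda_s\notin\mathcal{A})$ where $\mathcal{A}$ is the event of at least two zeros per block. You propose to handle this ``by combining the blockwise Poincar\'e inequality with a union bound over time and blocks'', but a Poincar\'e inequality bounds the $L^2(\mu)$-relaxation of mean-zero functions started from (or compared to) equilibrium; it gives no control on a hitting probability started from a \emph{deterministic} configuration, and since FA-1f is not attractive you cannot transfer equilibrium estimates to the deterministic start by monotonicity. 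The paper's mechanism is different: it uses the drift estimate of Proposition~4.1 of \cite{BCMRT13}, namely that $\xi^x(\sigma^\Lambda_s)$, the distance from $x$ to the nearest zero, is compared with a random walk drifted towards $0$, giving $\E_\sigma[\theta^{\xi^x(\sigma^\Lambda_s)}]\leq \theta^{\xi^x(\sigma)}e^{-\lambda s}+c$ for all $s$. Splitting each $\Lambda_i$ into two halves centered at $x_i^\pm$, the failure of $\mathcal{A}$ forces $\xi^{x_i^\pm}\geq|\Lambda_1|/4$ for some $i$, and Markov's inequality applied to $\theta^{\xi^{x_i^\pm}}$ yields $\sup_{s\leq t}\P(\sigma^\Lambda_s\notin\mathcal{A})\leq 2n\,\theta^{-|\Lambda_1|/4}(\theta^{|\Lambda_1|/8}+c)$. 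This is exactly where the hypothesis $\sigma\in\mathcal{H}(\cdot,\cdot,|\Lambda_1|/8)$ enters: not merely as ``at least eight zeros per block'', but as the bound $\xi^{x}(\sigma)\leq|\Lambda_1|/8$ on the initial distance to a zero, which produces the factor $\theta^{|\Lambda_1|/8}$ in the statement. Your proposal never produces this factor and gives no workable substitute for the drift argument.

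A second, related inaccuracy: you assert that the FA-1f generator on $\Lambda_i$ conditioned on having a zero has spectral gap of order $\theta^{-|\Lambda_i|/4}$. That is not what \cite{BCMRT13} proves and it is inconsistent with the shape of the bound you are trying to establish: if the gap were exponentially small in $|\Lambda_1|$, the leading term could not decay like $\exp(-c^{-1}t)$ with a rate independent of $|\Lambda_1|$. In \cite{BCMRT13} (and in the paper, which simply quotes its Proposition~5.1) the relaxation rate on the good event is of order one for $q>1/2$; the factors $\theta^{-|\Lambda_1|/4}$ and $\theta^{|\Lambda_1|/8}$ come solely from the probability of exiting $\mathcal{A}$, multiplied by $t|\Lambda|$, and not from any spectral gap. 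The finite-speed reduction and the terms $ne^{-q|\Lambda_1|}$, $|\Lambda|e^{-t/3}$ are handled essentially as you say, and allowing block sizes in $[|\Lambda_1|,2|\Lambda_1|]$ is indeed harmless, but as written the proposal is missing the key estimate that makes the proposition true for an arbitrary configuration in $\mathcal{H}(-K-\overline{v}t,K+\overline{v}t,|\Lambda_1|/8)$.
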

A similar result holds for the FA-1f dynamics in finite volume.
\begin{proposition}\label{relaxation_finitevolume}
Let $q>1/2$ and $t>0$. Let $K>0$, $f$ a bounded function with support contained in $\Lambda:=[-K,K]$ such that $\mu(f)=0$. 
Decompose $\Lambda=\sqcup_{i=1}^n\Lambda_i$ with $\Lambda_i$ disjoint intervals in $\Z$ such that for all $i=1,\ldots,n$ $|\Lambda_1|\leq|\Lambda_i|\leq 2|\Lambda_1|$. Choose $\theta>1$ satisfying $\frac{\theta}{\theta+1}<q$. Then there exists $c=c(q,\theta)>0$ such that for all initial configuration $\sigma\in\mathcal{H}(-K,K,|\Lambda_1|/8)$,
\begin{multline*}
|\E_\sigma[f(\sigma^\Lambda_t)]|\leq c\|f\|_\infty\left(\exp\left(-c^{-1}t+c|\Lambda|e^{-\frac{t}{c|\Lambda_1|}}\right)+tn|\Lambda|\theta^{-|\Lambda_1|/4}\left(c+\theta^{|\Lambda_1|/8}\right)\right.\\
\left.+\ ne^{-q|\Lambda_1|}+|\Lambda|e^{-t/3}\vphantom{e^{-\frac{t}{c|\Lambda_1|}}}
\right),
\end{multline*}
where $(\sigma^\Lambda_t)_{t\geq 0}$ denotes the FA-1f process in $\Lambda$ with zero boundary condition.

%There exist $c<\infty$, $\delta>0$ such that for any $\sigma\in\lbrace 0,1\rbrace^{\Z}$ in which the maximal distance between zeros is not greater than $\delta t$ in $\Lambda:=[-K-100t,K+100t]\cap\Z$, we have for all $t\geq 0$
%\begin{equation}
%|\E_\sigma[f(\sigma_t)]|\leq c\|f\|_\infty e^{-t/c}.
%\end{equation}
\end{proposition}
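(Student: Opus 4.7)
The plan is to follow the same scheme as in Proposition~\ref{relaxation_lemma}, which itself adapts the block-dynamics / auxiliary-chain strategy of \cite{BCMRT13}. The statement is in fact structurally simpler than Proposition~\ref{relaxation_lemma} because we are already working in finite volume, so the finite-speed-of-propagation step that expands the support of $f$ to $[-K-\overline{v}t,K+\overline{v}t]$ is not needed. Moreover, the zero boundary condition on $\partial\Lambda$ is favourable: it provides a permanent facilitating zero at each endpoint, so the extreme blocks of the decomposition can always equilibrate, and imposing a zero outside $\Lambda$ never destroys an existing zero inside $\Lambda$.

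First I would introduce an auxiliary Markov chain on $\{0,1\}^\Lambda$ which, at rate $1$, resamples the configuration in each block $\Lambda_i$ from its equilibrium conditional law, provided the two neighbouring blocks currently contain at least one zero (with the boundary playing that role for $\Lambda_1$ and $\Lambda_n$). A Poincaré-type inequality for this auxiliary chain, proved as in \cite{BCMRT13}, produces the leading term $\exp(-c^{-1}t + c|\Lambda|e^{-t/(c|\Lambda_1|)})$. Then, using a graphical coupling argument, one compares the finite-volume FA-1f dynamics with this auxiliary chain on the event that every block keeps a zero throughout $[0,t]$.

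The key technical step is the persistence-of-zeros estimate. Starting from $\sigma\in\mathcal{H}(-K,K,|\Lambda_1|/8)$, each refined sub-block of length $|\Lambda_1|/8$ already contains a zero, which one must maintain during $[0,t]$. This is handled by the censoring/entropy argument of \cite{BCMRT13} with the choice of $\theta>1$ satisfying $\theta/(\theta+1)<q$, yielding the bound $tn|\Lambda|\theta^{-|\Lambda_1|/4}(c+\theta^{|\Lambda_1|/8})$. Comparing the invariant measure of the auxiliary chain with $\mu$ and exploiting $\mu(f)=0$ introduces the correction term $ne^{-q|\Lambda_1|}$, which reflects the Bernoulli-type cost that a single block under $\mu$ could be entirely filled with ones. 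The residual $|\Lambda|e^{-t/3}$ absorbs the short-time regime in which the block chain has not yet mixed.

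The main obstacle is typically the persistence estimate, but this is precisely the argument already carried out for the infinite-volume Proposition~\ref{relaxation_lemma}; since the zero boundary condition only creates additional zeros, the estimate transfers verbatim. All four terms in the announced bound therefore arise exactly as in the infinite-volume proof, finishing the argument.
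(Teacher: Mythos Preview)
Your proposal is correct and follows essentially the same approach as the paper: the paper's proof is the one-liner ``the only change with respect to the proof of Proposition~\ref{relaxation_lemma} is that we don't need the finite-speed-of-propagation step \eqref{eq:relaxFSP}'', which is precisely the structural simplification you identify. Your unpacking of the \cite{BCMRT13} machinery is accurate in outline, though the persistence-of-zeros bound there (their Proposition~4.1) is a drift/supermartingale argument on $\xi^x$, the distance to the nearest zero, rather than a censoring or entropy argument.
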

\begin{proof}[Proof of Proposition~\ref{relaxation_lemma}]
The proof of this result is essentially contained in \cite{BCMRT13}. We reproduce here the arguments that we need.

First, let $(\sigma^\Lambda_t)_{t\geq 0}$ denote the FA-1f process restricted to $\Lambda=[-K-\overline{v}t,K+\overline{v}t]\cap\Z$ with empty boundary condition and started from $\sigma_\Lambda$. By finite speed of propagation, we have, for $f$ bounded with support in $[-K,K]$
 \begin{equation}\label{eq:relaxFSP}
|\E_\sigma[f(\sigma_t)]-\E_{\sigma_\Lambda}[f(\sigma^\Lambda_t)]|\leq 4\|f\|_\infty e^{-t}.
\end{equation}

Also, Proposition 5.1 in \cite{BCMRT13} ensures that for some $c=c(q)<\infty$,
\begin{multline*}
|\E_{\sigma_\Lambda}[f(\sigma^\Lambda_t)]|\leq c\|f\|_\infty\left(ne^{-q|\Lambda_1|}+t|\Lambda|\sup_{s\in[0,t]}\P_{\sigma_\Lambda}(\sigma^\Lambda_s\notin\mathcal{A})\right.\\
+\left.|\Lambda|\exp(-t/3)+\exp\left(-t/c+c|\Lambda|e^{-\frac{t}{c|\Lambda_1|}}\right)\vphantom{\frac{|\Lambda|}{\varepsilon t}}\right),
\end{multline*}
where $\mathcal{A}=\bigcap_{i=1}^n\left\{\sigma\in\lbrace 0,1\rbrace ^\Lambda\ \colon\ \sum_{x\in\Lambda_i}(1-\sigma(x))\geq 2\right\}$ is the event that there are at least two zeros in each $\Lambda_i$.
%Let us choose $\varepsilon=\varepsilon(K,q)$ such that $100 ce^{-1/c\varepsilon}\leq 1/2c$. This ensures that all the exponential terms in the above formula converge. 

It remains to control $\sup_{s\in[0,t]}\P_{\sigma_\Lambda}(\sigma^\Lambda_s\notin\mathcal{A})$. Here we need to recall another result from \cite{BCMRT13}. For $x\in\Lambda$, $\sigma\in\lbrace 0,1\rbrace^\Lambda$, let $\xi^x(\sigma)$ be the minimal distance from $x$ to an empty site in $\sigma$:
\begin{equation*}
\xi^x(\sigma)=\min\lbrace |x-y|\ \colon\ y\in\Lambda\cup\partial\Lambda\text{ s.t. }\sigma(y)=0\rbrace,
\end{equation*}
with the convention that $\sigma(y)=0$ for $y\in\partial\Lambda$ (recall that $\sigma^\Lambda$ is defined as the FA-1f process in $\Lambda$ with empty boundary conditions). Thanks to a comparison of $\xi^x$ with a random walk drifted towards $0$, Proposition 4.1 in \cite{BCMRT13} establishes that for all $x\in\Lambda$, $\theta\geq 1$ such that $q\in (\theta/(\theta+1),1]$, $\sigma\in\lbrace 0,1\rbrace^\Lambda$, $t\geq 0$,
\begin{equation}\label{eq:driftxi}
\E_\sigma\left[\theta^{\xi^x(\sigma^\Lambda_t)}\right]\leq \theta^{\xi^x(\sigma)}e^{-\lambda t}+\frac{q}{q(\theta+1)-\theta},
\end{equation}
where $\lambda=\frac{\theta^2-1}{\theta}\left(q-\frac{\theta}{\theta+1}\right)>0$.

Partition each $\Lambda_i$ into two intervals $\Lambda_i^+,\Lambda_i^-$ of length at least $|\Lambda_1|/2$, centered respectively in $x_i^+,x_i^-$. If $\sigma\notin\mathcal{A}$, necessarily there exists $i\in\{1,\ldots,n\}$ such that either $\sigma_{|\Lambda^+_i}\equiv 1$ or $\sigma_{|\Lambda^-_i}\equiv 1$. In particular, either $\xi^{x_i^+}(\sigma)$ or $\xi^{x_i^-}(\sigma)$ is larger than $|\Lambda_1|/4$. Therefore, for all $s\leq t$,
\begin{align*}
\P_{\sigma_\Lambda}(\sigma^\Lambda_s\notin\mathcal{A})& \leq \sum_{i=1}^n\left(\P_{\sigma_\Lambda}(\xi^{x_i^+}(\sigma^\Lambda_s)\geq |\Lambda_1|/4)+\P_{\sigma_\Lambda}(\xi^{x_i^-}(\sigma^\Lambda_s)\geq |\Lambda_1|/4)\right)\\
&\leq  \sum_{i=1}^n\theta^{-|\Lambda_1|/4}\left(\E_{\sigma_\Lambda}\left[\theta^{\xi^{x_i^+}(\sigma^\Lambda_s)}\right]+\E_{\sigma_\Lambda}\left[\theta^{\xi^{x_i^-}(\sigma^\Lambda_s)}\right]\right)\\
&\leq 2n\theta^{-|\Lambda_1|/4}\left(\theta^{|\Lambda_1|/8}+c\right)
\end{align*}
for some constant $c$ depending on $q,\theta$. The last inequality comes from \eqref{eq:driftxi} and the assumption on the maximal distance between two zeros in $\sigma$.
\end{proof}

\begin{proof}[Proof of Proposition~\ref{relaxation_finitevolume}]
The only change with respect to the above proof is that we don't need the finite volume propagation step \eqref{eq:relaxFSP}.
\end{proof}

%We now give a few applications of this result under different assumptions on the respective behaviors of $K$ and $t$.
\begin{corollary}\label{cor:relaxation}Let $q>1/2$ and $t>0$. Let $K>0$, $f$ a bounded function with support contained in $[-K,K]$ such that $\mu(f)=0$. If $K\leq e^{t^\alpha}$ with $\alpha\in (0,1/2)$, there exists $c'=c'(\alpha,q)>0$ such that,
\begin{itemize}
%\item If $K\leq\alpha t$ with $\alpha>0$, there exist $\beta=\beta(\alpha,q)>0$ and $c'=c'(\alpha,q)>0$ such that, if $\sigma\in\mathcal{H}(-K-\overline{v}t,K+\overline{v}t,\beta t)$,
%\begin{equation}
%|\E_\sigma[f(\sigma_t)]|\leq \frac{1}{c'}\|f\|_\infty e^{-c't}.
%\end{equation}
%\item If $K\leq t^\alpha$ with $\alpha>1$, there exist $\beta=\beta(\alpha,q)>0$ and $c'=c'(\alpha,q)>0$ such that, if $\sigma\in\mathcal{H}(-K-\overline{v}t,K+\overline{v}t,\beta t/\log t)$,
%\begin{equation}
%|\E_\sigma[f(\sigma_t)]|\leq \frac{1}{c'}\|f\|_\infty e^{-c'\frac{t}{\log t}}.
%\end{equation}
\item  if $\sigma\in\mathcal{H}(-K-\overline{v}t,K+\overline{v}t,\sqrt{t})$,
\begin{equation}\label{eq:relaxation}
|\E_\sigma[f(\sigma_t)]|\leq \frac{1}{c'}\|f\|_\infty e^{-c'\sqrt{t}};
\end{equation}
\item if $\sigma\in\mathcal{H}(-K,K,\sqrt{t})$,
\begin{equation}
|\E_\sigma[f(\sigma^{[-K,K]}_t)]|\leq \frac{1}{c'}\|f\|_\infty e^{-c'\sqrt{t}},\label{eq:relaxation_finitevolume}
\end{equation}
where $(\sigma^{[-K,K]}_t)_{t\geq 0}$ denotes the FA-1f process in ${[-K,K]}$ with zero boundary condition.
\end{itemize}
%The same holds for $|\E_\sigma[f(\sigma^\Lambda_t)]|$ under the hypotheses of Proposition~\ref{relaxation_finitevolume}.
\end{corollary}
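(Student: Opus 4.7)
The plan is to deduce Corollary~\ref{cor:relaxation} directly from Propositions~\ref{relaxation_lemma} and~\ref{relaxation_finitevolume} by calibrating the block length $|\Lambda_1|$ to be of order $\sqrt{t}$ and checking that each of the four summands in the resulting upper bound decays at least as fast as $e^{-c'\sqrt{t}}$.

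For~\eqref{eq:relaxation}, I would apply Proposition~\ref{relaxation_lemma} with $|\Lambda_1|=\lceil 8\sqrt{t}\rceil$, decomposing $\Lambda=[-K-\overline{v}t,K+\overline{v}t]\cap\Z$ into consecutive intervals of length between $|\Lambda_1|$ and $2|\Lambda_1|$. The hypothesis $\sigma\in\mathcal{H}(\cdot,\cdot,\sqrt{t})$ immediately implies $\sigma\in\mathcal{H}(\cdot,\cdot,|\Lambda_1|/8)$, since a configuration with no 0-gap of length $\sqrt{t}$ has no 0-gap of any length $\geq\sqrt{t}$, in particular of length $|\Lambda_1|/8$. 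I fix once and for all $\theta=\theta(q)>1$ with $\theta/(\theta+1)<q$, which exists because $q>1/2$. Under the assumption $K\leq e^{t^\alpha}$ with $\alpha<1/2$, one has $|\Lambda|\leq 2e^{t^\alpha}+2\overline{v}t+O(1)$ and $n\leq|\Lambda|/|\Lambda_1|$, so all prefactors are bounded by $\mathrm{poly}(t)\,e^{Ct^\alpha}$.

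It then remains to bound each of the four terms in Proposition~\ref{relaxation_lemma}. The first term, $\exp(-c^{-1}t+c|\Lambda|e^{-t/(c|\Lambda_1|)})$, is controlled by noting that $t/(c|\Lambda_1|)\geq\sqrt{t}/(9c)$ dominates $\log|\Lambda|\lesssim t^\alpha$ when $\alpha<1/2$; hence the inner correction $c|\Lambda|e^{-t/(c|\Lambda_1|)}$ is eventually bounded by $1$ and the whole term is at most $e^{-t/(2c)}$ for large $t$. The second term, $tn|\Lambda|\theta^{-|\Lambda_1|/4}(c+\theta^{|\Lambda_1|/8})$, is bounded by $\mathrm{poly}(t)\,e^{2t^\alpha}\theta^{-\sqrt{t}}$, which is at most $e^{-(\log\theta)\sqrt{t}/2}$ for $t$ large. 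The third term, $ne^{-q|\Lambda_1|}$, is bounded by $e^{t^\alpha-8q\sqrt{t}}\leq e^{-4q\sqrt{t}}$. Finally, $|\Lambda|e^{-t/3}\leq e^{t^\alpha-t/3}\leq e^{-t/4}$. Summing these four contributions yields~\eqref{eq:relaxation} with a constant $c'=c'(\alpha,q)>0$.

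The finite-volume bound~\eqref{eq:relaxation_finitevolume} follows from the same calibration applied to Proposition~\ref{relaxation_finitevolume}; the only difference is that $|\Lambda|\leq 2K+1\leq 2e^{t^\alpha}+1$, which can only improve the estimates above. I do not expect any real obstacle, since the argument is purely a parameter calibration. The one qualitative point to emphasize is the role of the assumption $\alpha<1/2$: it is precisely what guarantees that $\sqrt{t}$ dominates $\log|\Lambda|\leq t^\alpha$, which is in turn needed to neutralize the $c|\Lambda|e^{-t/(c|\Lambda_1|)}$ correction inside the first term and to absorb the polynomial/exponential prefactors appearing in the other three terms into the final $e^{-c'\sqrt{t}}$ rate.
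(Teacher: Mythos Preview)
Your proposal is correct and follows exactly the paper's approach: the paper's proof is the single sentence ``It suffices to choose $|\Lambda_1|=8\sqrt{t}$ and apply Propositions~\ref{relaxation_lemma} and~\ref{relaxation_finitevolume},'' and you have simply supplied the routine term-by-term verification that the paper omits. Your bounds on each of the four summands are accurate, and your remark that $\alpha<1/2$ is precisely what makes $t^\alpha=o(\sqrt t)$ and hence allows the prefactors to be absorbed is the right diagnosis.
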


The assumption $K\leq e^{t^\alpha}$ with $\alpha<1/2$ represents in fact the largest support we can consider for $f$ such that the estimate in Proposition~\ref{relaxation_lemma} is useful. Indeed, it does not give a vanishing estimate for $K=e^{\sqrt{t}}$.

\begin{proof}
It suffices to choose $|\Lambda_1|=8\sqrt{t}$ and apply Propositions~\ref{relaxation_lemma} and~\ref{relaxation_finitevolume}.%appropriately the size of $\Lambda_1$.
%\begin{itemize}
%\item If $K\leq\alpha t$ with $\alpha>0$, we let $|\Lambda_1|=4\beta t$ with $c(\alpha+100)e^{-(c\beta)^{-1}}=(2c)^{-1}$.
%\item If $K\leq t^\alpha$ with $\alpha>1$, we let $|\Lambda_1|=4\beta\frac{t}{\log t}$ with $\beta=(2c(\alpha-1))^{-1}$.
%\item If $K\leq e^{t^\alpha}$ with $\alpha\in (0,1/2)$, we let $|\Lambda_1|=4\sqrt{t}$.\qedhere
%\end{itemize}
\end{proof}
%\begin{corollary}\label{cor:relaxation_finitevolume}
%Let $K>0$, $f$ a bounded function with support contained in $\Lambda:=[-K,K]$ such that $\mu(f)=0$. If $K\leq e^{t^\alpha}$ with $\alpha\in (0,1/2)$ then there exists $c'=c'(\alpha,q)>0$ such that for all initial configuration $\sigma\in\mathcal{H}(-K,K,\sqrt{t})$,
%\[
%|\E_\sigma[f(\sigma^\Lambda_t)]|\leq \frac{1}{c'}\|f\|_\infty e^{-c'\sqrt{t}},
%\]
%where $(\sigma^\Lambda_t)_{t\geq 0}$ denotes the FA-1f process in $\Lambda$ with zero boundary condition.
%\end{corollary}

\section{Coupling between FA-1f and contact process}\label{section:couplingfapc}

We wish to exploit the comparison result between the FA-1f and contact processes (see Lemma~\ref{order}) to guarantee a sufficient number of zeros for the FA-1f dynamics. To this purpose, since the contact process can die, we will need first to do a restart argument.

%We want to compare the two processes at any time but the contact process can vanish so we will use a restart argument. %Let $\delta^x\in\Omega$ be defined by $\delta^x(x)=0$ and $\delta^x(y)=1$ if $y\neq x$.

\subsection{Restart argument}

\begin{lemma}\label{couplage1} Let $q>\overline{q}$. For any $\sigma\in LO_0$, there exist a process $(\sigma_t,\eta_t)_{t\geq 0}$ taking values in $\Omega^2$ and two random variables $T$ and $Y$ taking respectively their values in $\R^+$ and $\Z$ such that 
\begin{enumerate}
\item $(\sigma_t)_{t>0}$ is an FA-1f process starting from $\sigma\in LO_0$,
\item $\forall t>0,\forall x\in\Z, \sigma_t(x)\leq \eta_t(x)$,
\item $\left(\eta_{T+t}(Y+\cdot)\right)_{t>0}$ is a surviving threshold contact process starting from $\delta^0$. 
\end{enumerate}
Moreover, $T$ and $|Y|$ have exponentially decaying tail probabilities.
\end{lemma}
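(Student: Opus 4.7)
The plan is to iterate the basic coupling and restart the contact process at the FA-1f front whenever the former goes extinct, until we catch a surviving attempt.

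\textbf{Construction.} Since $\sigma\in LO_0$ satisfies $\sigma\leq\delta^0$, the basic coupling of Section~\ref{graphical} started from $(\sigma,\delta^0)$ produces $(\sigma_t,\eta_t)_{t\geq0}$ with $\sigma_t\leq\eta_t$ for all $t$ by Lemma~\ref{order}; the marginal $\eta_t$ is a threshold contact process starting from $\delta^0$. If its extinction time $\tau_1$ equals $+\infty$, we are done with $T=0$ and $Y=0$. Otherwise, $\sigma_{\tau_1}$ still admits a front $Y_1:=X(\sigma_{\tau_1})$, because by finite speed of propagation the $LO$ structure far to the left of the origin is preserved by the FA-1f dynamics, and the relation $\sigma_{\tau_1}\leq\delta^{Y_1}$ is automatic ($\sigma_{\tau_1}$ is $1$ strictly left of $Y_1$ and $0$ at $Y_1$). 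Reset $\eta_{\tau_1}:=\delta^{Y_1}$ and continue the basic coupling using the clocks of $\mathcal{C}$ after $\tau_1$. By the strong Markov property of the Poisson clocks, the restarted $\eta$ is, conditionally on $Y_1$, a threshold contact process from $\delta^{Y_1}$ independent of the past, while Lemma~\ref{order} ensures that $\sigma_t\leq\eta_t$ persists beyond $\tau_1$. Iterating, we obtain stopping times $0=:\tau_0<\tau_1<\tau_2<\cdots$ and positions $Y_1,Y_2,\ldots$; we set $T:=\tau_N$ and $Y:=Y_N$, where $N$ is the index of the first attempt that survives.

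\textbf{Exponential tails.} Each attempt survives independently with probability $\rho:=\P_{\delta^0}(\tau(\eta_\cdot)=+\infty)>0$, positive because $q>\bar q$ gives $q/p>\lambda^{\scriptscriptstyle{\text{TCP}}}_c(\Z)$; hence $N$ is geometric with parameter $\rho$. For $T$: each failed increment $\tau_{i+1}-\tau_i$ is distributed as the extinction time of a contact process from $\delta^0$ conditional on $\{\tau<+\infty\}$, which has exponentially decaying tail by the classical supercritical estimates recalled in Appendix~\ref{annexe_cp}. A geometric sum of independent random variables with exponential tails still has an exponential tail, so $T$ does. For $|Y|$, finite speed of propagation~\eqref{atmostlinearfa} applied to the front between two consecutive restarts gives $|Y_{i+1}-Y_i|\leq\bar v(\tau_{i+1}-\tau_i)$ outside an event of probability at most $e^{-\bar v(\tau_{i+1}-\tau_i)}$, so $|Y|\leq\bar v T$ modulo an exponentially small error, which is enough.

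\textbf{Main obstacle.} The delicate point is organising the restarts so that the successive attempts behave as genuinely i.i.d.\ Bernoulli trials for the survival event. This requires (i) that each $\tau_i$ is a stopping time for the filtration generated by $\mathcal{C}$ so that the strong Markov property applies, and (ii) that the reset $\eta_{\tau_i}\mapsto\delta^{Y_i}$ is compatible with the domination $\sigma\leq\eta$, which is exactly why we reset at the front $Y_i=X(\sigma_{\tau_i})$ rather than at an arbitrary zero. Once this independence is established, the desired exponential tails reduce to the well-known extinction-time estimate for the supercritical contact process from a single infection.
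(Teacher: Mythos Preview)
Your restart argument is correct and is the same strategy as the paper's. Two implementation differences are worth noting. First, the paper uses a fresh independent copy $\mathcal{C}^{(i)}$ of the clock collection for each attempt rather than invoking the strong Markov property on a single $\mathcal{C}$; this makes the i.i.d.\ structure of the survival indicators and of the conditioned extinction times completely explicit without any measurability discussion. Second, for the tail of $|Y|$ the paper does \emph{not} use finite speed of the FA-1f front: it instead observes that the front at each extinction time satisfies $X_i\le Z_i+1$, where $Z_i$ is the position of the last surviving zero of the $i$-th contact attempt, and then bounds $|Y|$ through $\sum_{i<L}(Z_i-X_{i-1})$, i.e.\ the spatial extent of the non-surviving contact processes, via \eqref{atmostlinearpc}. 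Your route---noting that $Y=X(\sigma_T)$ and combining the exponential tail of $T$ with \eqref{atmostlinearfa}---is more direct. One cosmetic fix: the bound ``probability at most $e^{-\bar v(\tau_{i+1}-\tau_i)}$'' is stated in terms of a random quantity; write instead, for some $a<1/\bar v$,
\[
\P(|Y|>t)\le \P(T>at)+\P\Bigl(\sup_{s\le at}|X(\sigma_s)|>t\Bigr)\le Ae^{-Bat}+2e^{-t},
\]
the last term coming from $\P(F(0,\pm\lceil t\rceil,at))$ and Lemma~\ref{finitespeedpropagation}.
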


\begin{proof} The idea is to couple a FA-1f process with a contact process and to restart the second one each time that it vanishes. Eventually, the contact process will survive (because $q> \bar q$) and the space-time point $(Y,T)$ corresponding to the origin of this surviving contact process is not very far from the origin. The procedure is illustrated by Figure~\ref{fig:couplingfapc}.

\begin{figure}[bt]
{\centering
\begin{tikzpicture}[scale=0.7]

\draw[->] (-8,-4)--(-7,-7);
\draw (-8,-4) node[above]{Front FA};

%les courbes
\draw[-] (0,0) to[out=-120,in=145] (-2,-1.5) to[out=-35,in=65] (-3,-3);
 \draw[-] (-3,-3) to[out=-115,in=90]  (-4,-4) to[out=-90,in=130] (-5,-6) to[out=-50,in=90]  (-6,-7);
\draw[-] (-6,-7) to[out=-90,in=65] (-8,-8.5) to[out=-115,in=65] (-10,-10);
\draw[dotted] (-10,-10) to[out=-115,in=45] (-11,-11);

\draw (0,0)--(8,0);
\draw[dashed] (-10,-10)--(8,-10) node[right]{$t$};

%Vanishing PC 1
\draw (0,0) to[out=-120,in=125] (-1,-3);
\draw (0,0) to[out=-70,in=70] (-1,-3);
\fill[opacity=0.5,gray] (0,0) to[out=-120,in=125] (-1,-3) to[out=70,in=-70] (0,0);
\draw[<->](8.5,0)--(8.5,-3) node[midway, left]{$U_1$};
\draw[->] (2,-4) --(0.5,-2) ;
\draw (2,-4) node[right]{vanishing contact};
\draw (3,-4.8) node[right]{processes};

%Vanishing PC 2
 \draw (-3,-3) to[out=-115,in=140]    (-2.5,-7); %(-4,-4) to[out=-90,in=130] (-5,-6) to[out=-50,in=90]
  \draw (-3,-3) to[out=-70,in=70]  (-2.5,-7); %(-4,-4) to[out=-90,in=130] (-5,-6) to[out=-50,in=90]
\fill[opacity=0.5,gray] (-3,-3) to[out=-115,in=140]    (-2.5,-7) to[out=70,in=-70] (-3,-3);
\draw[<->](8.5,-3)--(8.5,-7) node[midway, left]{$U_2$};
\draw[->] (2,-4) --(-1.5,-5) ;
\foreach \x in { (-2.5,-7)}
 {\draw[thick] \x circle(0.2);
 \fill[white] \x circle(0.2);}

%surviving PC
  \draw (-6,-7) to[out=-90,in=65]  (-8,-10); %(-8,-8.5) to[out=-115,in=65]
  \draw[dotted] (-8,-10) to[out=-115,in=45] (-9,-11);
    \draw (-6,-7) to[out=-90,in=65]  (-5,-10); %(-8,-8.5) to[out=-115,in=65]
\draw[dotted] (-5,-10) to[out=-115,in=45] (-4,-11);

\fill[opacity=0.5,gray] (-6,-7) to[out=-90,in=65]  (-8,-10) --(-5,-10) to[out=65,in=-90] (-6,-7);

\draw[->] (-3,-8.8) --(-5,-8.8);
\draw (-3,-8.8) node[right]{surviving contact process};

\draw[<->](9,0)--(9,-7) node[midway, right]{$T$};
\draw[dashed](-6,0)--(-6,-7);
\draw[<->] (-6,0)--(-0.25,0) node[near start, above]{$Y$};
\draw[<->] (-3,0.25)--(-0.25,0.25) node[midway, above]{$X_1$};
\draw[dashed,gray](-3,0.25)--(-3,-3);
\draw[<->] (-0.75,-3)--(0,-3) node[midway, below,scale=0.7]{$Z_1$};
\draw[dashed,gray](0,0)--(0,-3);

  \draw[thick] (-8,-10) circle(0.2) node[below right]{$X(\eta_t)$};
 \fill[white] (-8,-10) circle(0.2);

 \foreach \x in { (0,0),(-3,-3), (-6,-7)}
 {\draw[thick] \x circle(0.2);
 \fill[white] \x circle(0.2);}
 
  \draw[thick] (-10,-10) circle(0.2) node[above left]{$X(\sigma_t)$};
 \fill[white] (-10,-10) circle(0.2);
 
 \foreach \x in { (-1,-3)}
 {\draw[thick] \x circle(0.2);
 \fill[white] \x circle(0.2);}

\end{tikzpicture}
\par}
\caption{Restart procedure, coupling between FA-1f and surviving contact process.}
\label{fig:couplingfapc}
\end{figure}
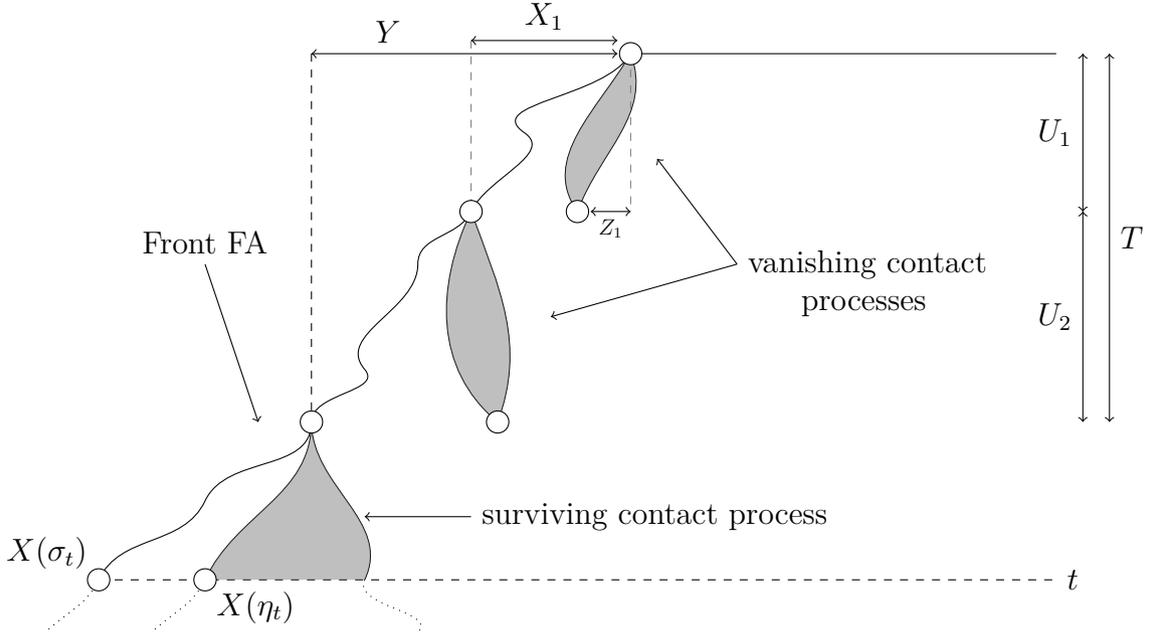

Let $(\mathcal{C}^{(i)})_{i\in\N}$ be a sequence of independent copies of the collection described in Section~\ref{graphical} and $\P$ their distribution. For $i\in\N$, let $\eta^{(i)}_\cdot=(\eta^{(i)}_t)_{t\geq 0}$ be the contact process started from $\delta^{0}$ constructed with $\mathcal{C}^{(i)}$. Further let $U_i=\tau (\eta^{(i)}_{\cdot})$ be the extinction time of $\eta^{(i)}_\cdot$ as defined in~\eqref{extinctiontimePC}. The random variables $(U_i)_{i\in\N}$ are independent and identically distributed and, by our choice of $q$, we have $\P(U_1=\infty)>0$. Let \[L=\min\{i\in\N, U_i=\infty\}.\] The random variable $L$ has geometric distribution. Moreover, conditionally on $\{L=l\}$, $(U_1,\ldots,U_{l-1})$ are i.i.d.\@ with the same distribution as $U_1$ conditioned on $U_1<\infty$. Let 
\[
T=\sum_{i=1}^{L-1} U_i,
\]
with $T=0$ if $L=1$.
Then $T$ has exponentially decaying tail probabilities. Indeed, from Estimate~\eqref{small_cluster} (\textit{cf.}\@ Appendix~\ref{annexe_cp}), we have for $t>0$ 
\begin{align*}\label{decexpoUi}
\P(t<U_i<\infty)=\P(t<\tau^{\{0\}}<\infty)\leq C_1\exp(-C_2t).
\end{align*} 
So, we can choose $\beta_1$ and $\beta_2$ such that $\E[e^{\beta_1 (L-1)}]<\infty$ and $\E\left[e^{\beta_2 U_1} \mid U_1<\infty\right]<e^{\beta_1}$. We have that
\begin{align*}
\E\left[e^{\beta_2 T}\right]=\E\left[\E\left[e^{\beta_2\sum_{i=1}^{L-1} U_i} |L\right]\right]=\E\left[\E\left[e^{\beta_2 U_1} \mid U_1<\infty\right]^{L-1}\right]\leq \E\left[e^{\beta_1 (L-1)}\right]<\infty.
\end{align*}

We construct recursively a sequence of processes $(\sigma^{[i]}_t,\eta^{[i]}_t)_{t\geq 0}$ and random variables $X_i\in\Z$ for $i\in\N$.
\begin{enumerate}
\item Let $(\sigma^{[1]}_t,\eta^{[1]}_t)_{t\geq 0}$ be the basic coupling started from $(\sigma,\delta^0)$ using $\mathcal{C}^{(1)}$. We define $X_1:=X(\sigma_{U_1})$ if $U_1<\infty$ and $0$ else.
\item Assuming $(\sigma^{[i]}_t,\eta^{[i]}_t)_{t\geq 0}$ and $X_i$ have been constructed, we define $(\sigma^{[i+1]}_t,\eta^{[i+1]}_t)_{t\geq 0}$ as follows.
\begin{itemize}
\item If $T_i:=\sum_{j=1}^iU_j=\infty$ then $(\sigma^{[i+1]}_t,\eta^{[i+1]}_t)_{t\geq 0}:=(\sigma^{[i]}_t,\eta^{[i]}_t)_{t\geq 0}$ and $X_{i+1}:=X_i$.
\item Else, let $(\sigma^{[i+1]}_t,\eta^{[i+1]}_t)=(\sigma^{[i]}_t,\eta^{[i]}_t)$ for $t<T_i$ and $(\sigma^{[i+1]}_{T_i+t},\eta^{[i+1]}_{T_i+t})_{t\geq 0}$ be the basic coupling started from $(\sigma^{[i]}_{T_i},\delta^{X_i})$ using $\theta_{X_i}\mathcal{C}^{(i+1)}$ the spatial translation by $X_i$ of the collection $\mathcal{C}^{(i+1)}$ (in particular $U_{i+1}$ is the extinction time of $(\eta^{[i+1]}_{T_i+t})_{t\geq 0}$). We choose $X_{i+1}:=X(\sigma^{[i+1]}_{T_i+U_{i+1}})$ if $U_{i+1}<\infty$ and $X_{i+1}:=X_i$ else.
\end{itemize}
\end{enumerate}

Since $L$ has geometric distribution, the algorithm fixates almost surely in finite time: for $i\geq L$, $(\sigma^{[i+1]}_t,\eta^{[i+1]}_t)_{t\geq 0}=(\sigma^{[i]}_t,\eta^{[i]}_t)_{t\geq 0}$. This allows to define $(\sigma_t,\eta_t)_{t\geq 0}$ as  $(\sigma^{[L]}_t,\eta^{[L]}_t)_{t\geq 0}$ and $Y=X_{L-1}$. Moreover, since the $U_i$ are stopping times, $(\sigma_t)_{t\geq 0}$ is a FA-1f process started from $\sigma$. We also have immediately that $\left(\eta_{T+t}(Y+\cdot)\right)_{t>0}$ is a surviving threshold contact process starting from $\delta^0$. Finally, Lemma~\ref{order} implies that $\sigma_t\leq\eta_t$ for all $t\geq 0$; indeed, by definition, $X_i$ is a zero of the configuration $\sigma^{[i]}_{T_i}$ and therefore $\sigma^{[i]}_{T_i}\leq\delta^{X_i}$ for $i\leq L-1$.

It remains to show that $Y$ has exponentially decaying tail probability. To that end, for $i\leq L-1$ let $Z_i$ be the position of the unique zero in $\eta_{T_i^-}$. We have that $X_{i}\leq Z_i+1$. Indeed, by definition of $Z_i$, $\eta_{U_i^-}(Z_i)=\sigma_{U_i^-}(Z_i)=0$. If $\sigma_{U_i^-}(Z_i-1)=\sigma_{U_i^-}(Z_i+1)=1$ then $\sigma_{U_i}(Z_i)=0$ because the FA-1f constraint is not satisfied and the zero at position $Z_i$ at time $U_i^-$ can not update. If $\sigma_{U_i^-}(Z_i-1)$ or $\sigma_{U_i^-}(Z_i+1)$ is equal to $0$, it is still equal to $0$ at time $U_i$ because there is already a Poisson clock ringing at position $Z_i$ at time $U_i$, so $X_{i}\leq Z_i+1$. Thus,
\[
Y= X_{L-1}\leq \sum_{i=1}^{L-1} (Z_i-X_{i-1}) +L,
\] 
with $X_0=0$.
Conditionally on the event $\{L=l\}$, the random variables $\{Z_i-X_{i-1}, i=1,\ldots, l-1\}$ are independent and have the same distribution: they represent the expansion of a non-surviving contact process. For $t>0$,
\begin{align*}
\P(T\leq at, |Y|>t,L\leq t/2)
%&\aureliacomments{\leq \P\left(\sum_{i=1}^{L-1} U_i\leq at,\left|\sum_{i=1}^{L-1} Z_i-X_{i-1}\right|+L>t, L\leq t/2\right)}\\
&\leq \P\left(\sum_{i=1}^{L-1} U_i\leq at,\left|\sum_{i=1}^{L-1} Z_i-X_{i-1}\right|>t/2\right)\\
&\leq \sum_{l=1}^\infty\P(L=l)\P\left(\sum_{i=1}^{l-1} U_i\leq at,\left|\sum_{i=1}^{l-1} Z_i-X_{i-1}\right|>t/2\big|L=l\right)\\
&\leq A\exp(-Bt)
\end{align*}
using the `at most linearity'~\eqref{atmostlinearpc} of the contact process with $a<\frac{1}{2\overline{v}}$. 
We conclude using the exponentially decaying tail of $L$ and $T$:
\begin{align*}
\P\left(|Y|>t\right)&\leq \P(T>at)+\P(L> t/2) +\P(T\leq at, |Y|>t,L\leq t/2)
\\
&\leq A\exp(-Bt).\qedhere
\end{align*}
\end{proof}

\subsection{Consequences of the coupling}
The first consequence of the previous coupling is the 'at least linear growth' of the front of the FA-1f process. 
\begin{corollary}\label{atleastlinearFA} Let $q>\overline{q}$. There exists $\underline{v}>0$ and $A,B>0$ such that for every $\sigma\in LO_0$ and $t>0$,
\[\P(X(\sigma_t)>-\underline{v}t)\leq A\exp(-Bt).\]
\end{corollary}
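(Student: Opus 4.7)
The plan is to transfer the known ballistic behavior of the leftmost infected site of a surviving threshold contact process to the FA-1f front via the coupling of Lemma~\ref{couplage1}. Given $\sigma\in LO_0$, let $(\sigma_t,\eta_t)_{t\geq 0}$, $T$ and $Y$ be the objects produced by that lemma. Since $\sigma_t\leq\eta_t$ pointwise, every zero of $\eta_t$ is also a zero of $\sigma_t$, so $X(\sigma_t)$ is at most the position of the leftmost zero of $\eta_t$. Denoting by $l_s$ the leftmost infected site of the surviving threshold contact process $\tilde\eta_s:=\eta_{T+s}(Y+\cdot)$, this yields for every $t\geq T$ the key inequality $X(\sigma_t)\leq Y+l_{t-T}$.

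The next ingredient I would invoke is a standard estimate on the surviving contact process: there exist $\underline v_{\mathrm{CP}}>0$ and $A_0,B_0>0$ such that $\P(l_s>-\underline v_{\mathrm{CP}}\,s)\leq A_0 e^{-B_0 s}$. The hypothesis $q>\overline q$ is tailored precisely for this purpose: it forces $q/p>2\lambda_c(\Z)$, which allows one to borrow the classical exponential bounds on the front location of the supercritical contact process (see Section~\ref{section_pc} and Appendix~\ref{annexe_cp}) rather than having to redevelop them for the threshold variant.

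It then suffices to combine these ingredients. I would fix $\epsilon\in\bigl(0,\underline v_{\mathrm{CP}}/(2(1+\underline v_{\mathrm{CP}}))\bigr)$ and decompose according to the three bad events $\{T>\epsilon t\}$, $\{|Y|>\epsilon t\}$, and $\{T\leq\epsilon t\}\cap\{l_{t-T}>-\underline v_{\mathrm{CP}}(t-T)\}$. On the intersection of their complements,
\[
X(\sigma_t)\leq Y+l_{t-T}\leq \epsilon t-\underline v_{\mathrm{CP}}(1-\epsilon)t\leq -\tfrac{\underline v_{\mathrm{CP}}}{2}\,t,
\]
so the claim will hold with $\underline v:=\underline v_{\mathrm{CP}}/2$. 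Each of the three bad probabilities is exponentially small in $t$: the first two by the tail bounds on $T$ and $|Y|$ furnished by Lemma~\ref{couplage1}, and the third by the contact process estimate applied at time $s=t-T\geq(1-\epsilon)t$. A union bound concludes.

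The only step I expect to require some care is the exponential (rather than merely almost sure) lower bound on $-l_s/s$ for the surviving threshold contact process — the remainder of the argument is really just bookkeeping around the coupling. This estimate is not specific to the present paper, but is exactly the type of classical large-deviation statement that motivated the reinforced assumption $q>\overline q$ rather than the weaker survival threshold $q/p>\lambda_c^{\scriptscriptstyle{\text{TCP}}}(\Z)$; I would import it directly from Appendix~\ref{annexe_cp}.
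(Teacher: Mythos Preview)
Your proposal is correct and follows essentially the same approach as the paper: both arguments invoke the coupling of Lemma~\ref{couplage1}, use $\sigma_t\leq\eta_t$ to bound $X(\sigma_t)$ by the leftmost zero $Y+l_{t-T}$ of the surviving contact process, and then combine the exponential tails of $T$ and $|Y|$ with the ballistic estimate~\eqref{atleastlinear} on $l_s$. The paper's bookkeeping is organized slightly differently (a single parameter $c$ with the constraint $(\underline v+c)/(1-c)<v_{cp}$ rather than your separate $\epsilon$-threshold decomposition), but the substance is the same, including the choice $\underline v=v_{cp}/2$.
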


\begin{proof} Denote by $v_{cp}$ the velocity of the contact process (see Theorem~\ref{DGestimates}). Choose $\underline{v}=v_{cp}/2$. Let $(\sigma_t,\eta_t)_{t\geq 0}$, $T$ and $Y$ be the objects defined in Lemma~\ref{couplage1}. We denote by $X(\eta_t)$ the position of the leftmost zero at time $t$ in a contact process started from $\delta^0$.
For every $t>0$, $c\in(0,1)$,
\begin{align*}
\P~\big(X(\sigma_t)>-\underline{v}t&, 0\leq T\leq ct, Y\leq ct\big)\\
&\leq \P\left(X(\eta_t)-Y>-(\underline{v}+c)t, (1-c)t\leq t-T\leq t\right)\\
&\leq  \P\left(X(\eta_{T+(t-T)}(Y+\cdot))>-\frac{\underline{v}+c}{1-c}(t-T), (1-c)t\leq t-T\leq t\right) 
\\
&\leq \sup_{u\in[(1-c)t,t]} \P\left(X(\eta_{T+u}(Y+\cdot))>-\frac{\underline{v}+c}{1-c}u\right)
\\
&\leq \sup_{u\in[(1-c)t,t]} A\exp(-B u)=A\exp(-B't)
% &\phantom{aaaaaaa}\leq \int_{u=(1-c)t}^t \P(X(\eta_{T+u}(Y+\cdot))>-v'u)d\P(T=t-u)\\
% &\phantom{aaaaaaa}\leq \int_{u=(1-c)t}^t A\exp(-Bu)d\P(T=t-u) \leq A'\exp(-B't)
\end{align*}
where we used that $\sigma_t\leq\eta_t$ (and therefore $X(\sigma_t)\leq X(\eta_t)$) in the first line.  We choose $c>0$ such that $\frac{\underline{v}+c}{1-c}< v_{cp}$ and apply~\eqref{atleastlinear} of Theorem~\ref{DGestimates} to the surviving contact process $\tilde{\eta}_u(\cdot)=\eta_{T+u}(Y+\cdot)$ and we bound the last probability. The fact that $T$ and $|Y|$ have exponentially decaying tails allows to conclude. 
\end{proof}

The second consequence of the coupling is to guarantee a minimal quantity of zeros around the origin in the FA-1f process.
\begin{corollary}\label{cor:zerozone}Let $q>\overline{q}$. There exist $c_1,c_2>0$ such that for any $\sigma\in LO_0$, $t>0$, if $\sigma(x)=0$ %and $l\gg \log t$ 
then we have
\[\P_{\sigma}(\sigma_t\notin\mathcal{H}(x-\underline{v} t ,x+\underline{v} t,l))\leq c_1 t\exp\left(-c_2 \left(t\wedge l\right)\right).\]
\end{corollary}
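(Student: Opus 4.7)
My plan is to reduce to a surviving contact process centered near $x$ (via a spatially shifted version of the restart argument in Lemma~\ref{couplage1}), then use the supercritical contact process's shape/coupled-zone theorem together with the fact that the upper invariant measure has exponentially small probability of all-ones windows.

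\textbf{Step 1 (shifted restart coupling).} Since $\sigma(x)=0$ we have $\sigma\leq\delta^x$ pointwise, so Lemma~\ref{order} yields a coupling with a threshold contact process started from $\delta^x$ satisfying $\sigma_t\leq\eta_t$ for all $t$. I would run the same iteration as in the proof of Lemma~\ref{couplage1} but anchored at $x$: at each extinction, restart from a zero of $\sigma^{[i]}_{T_i}$ sitting within distance one of the last infected site of the killed contact process (such a zero always exists by the argument $X_i\leq Z_i+1$ in the proof of Lemma~\ref{couplage1}). The same exponential-tail estimates on the number of restarts and on the displacement apply verbatim, producing random variables $T\in\R^+$ and $Y\in\Z$ with exponentially decaying tails and a coupling with $\sigma_t\leq\eta_t$ such that $\tilde\eta_s:=\eta_{T+s}(x+Y+\cdot)$, $s\geq 0$, is a surviving threshold contact process started from $\delta^0$. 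Thus $\sigma_{T+s}(y)\leq \tilde\eta_s(y-x-Y)$ for all $s,y$, and a $0$ of $\tilde\eta$ is a $0$ of $\sigma$.

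\textbf{Step 2 (the good event and the upper invariant measure).} Fix $c>0$ so small that $(\underline{v}+c)/(1-c)<v_{cp}$, where $v_{cp}$ is the velocity of the contact process given by Theorem~\ref{DGestimates}. On the event $G=\{T\leq ct\}\cap\{|Y|\leq ct\}$, which has probability at least $1-Ae^{-Bt}$ by the exponential tails of $T$ and $Y$, the interval $[x-\underline{v}t,x+\underline{v}t]-(x+Y)$ is contained in $[-(v_{cp}-\varepsilon)(t-T),(v_{cp}-\varepsilon)(t-T)]$ for some fixed $\varepsilon>0$ and $t-T\geq(1-c)t$. Inside this cone, the coupled-zone estimate from Theorem~\ref{DGestimates} couples $\tilde\eta_{t-T}$ with a configuration distributed according to the upper invariant measure $\bar\nu$ of the contact process, up to an exponentially small (in $t$) error. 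Under $\bar\nu$, the supercriticality assumption gives the standard bound $\bar\nu(\eta\equiv 1 \text{ on } I)\leq Ce^{-\alpha |I|}$ for every finite interval $I$. A union bound over the $O(t)$ length-$l$ subwindows of $[x-\underline{v}t,x+\underline{v}t]$ therefore controls the probability of a $0$-gap of length $l$ in $\tilde\eta_{t-T}$ by $Cte^{-\alpha l}$ (plus the coupling error).

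\textbf{Step 3 (combine).} Summing the three error sources — $\P(G^c)$, the coupled-zone error, and the union-bound estimate — yields
\[
\P_\sigma(\sigma_t\notin\mathcal{H}(x-\underline{v}t,x+\underline{v}t,l))\leq Ae^{-Bt}+A'e^{-B'(t-T)}+Cte^{-\alpha l}\leq c_1 t\exp(-c_2(t\wedge l)),
\]
after absorbing constants. The main obstacle is quantitative: extracting from the contact process toolbox in Theorem~\ref{DGestimates} a coupled-zone statement that is both (i) valid inside a cone of slope $v_{cp}-\varepsilon$ and (ii) exponentially fast in $t$, and then combining it with an exponential tail on $\bar\nu(\eta\equiv 1$ on $I)$. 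The latter is a classical consequence of supercriticality together with FKG and positivity of the density of infected sites, so the real work is just to collect the right statements from the appendix.
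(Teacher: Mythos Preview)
Your approach is essentially the paper's: a shifted restart coupling (exactly as you describe, and as the paper remarks at the end of its proof for $x\neq 0$), followed by the supercritical contact process coupled-zone estimate plus an exponential bound on all-ones windows, and a union bound over the $O(t)$ windows of length $l$. The paper packages the contact-process input as Corollary~\ref{zerosCP}, whose proof uses duality directly: $\P(\eta_t^{\Z}\equiv 1\text{ on }I_l)=\P(\tau^{I_l}\leq t)\leq C_1e^{-C_2l}$ by~\eqref{survival_initial}, and then~\eqref{coupled_region} transfers this to the surviving process $\eta_t^{\{0\}}$ inside the cone. Your detour through the upper invariant measure $\bar\nu$ is harmless (since $\P(\eta_t^{\Z}\equiv 1\text{ on }I)\leq\bar\nu(\eta\equiv 1\text{ on }I)=\P(\tau^I<\infty)$), but the ``FKG plus positive density'' justification you sketch is not the right tool here; it is the duality/survival estimate~\eqref{survival_initial} that gives the exponential decay.

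Two small cleanups: in Step~3 your bound $A'e^{-B'(t-T)}$ still contains the random $T$; on $G$ you have $t-T\geq(1-c)t$, so replace it by $A'e^{-B'(1-c)t}$. Also, the coupled-zone estimate~\eqref{coupled_region} is stated pointwise, so turning it into a statement about a whole window costs a union bound over $O(t)$ sites (an extra factor $t$), which is exactly what the paper records and is absorbed in the final $c_1t$.
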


\begin{proof} Let us do the proof for $x=0$. We use again the coupling from Lemma~\ref{couplage1}. For $t>0$, $\alpha\leq v_{cp}-\underline{v}$,
\begin{align*}
\P_{\sigma}&\left(\sigma_t\notin\mathcal{H}(-\underline{v} t ,\underline{v} t,l),T\leq t/2,|Y|\leq \alpha t\right)\\
%&\phantom{aaaa}\aureliacomments{\leq\P\left(\exists I_l\subset [-\underline{v} t ,+\underline{v} t ],~ |I_l|=l\text{ and }\eta_t(I_l)\equiv 1,  T\leq t/2,|Y-x|\leq \alpha t/2\right)}\\
&\phantom{aaaa}\leq\P\left(\exists y\in [Y-(\underline{v}+\alpha)t ,Y+(\underline{v}+\alpha)t-l+1]\text{ s.t. }\eta_t([y,y+l-1])\equiv 1,  T\leq t/2\right)\\
%&\phantom{aaaa}\aureliacomments{\leq\P\left(\exists I_l\subset [Y-\underline{v} U ,Y+\underline{v} U ],~ |I_l|=l\text{ and }\eta_{T+U}(I_l)\equiv 1,  U\geq t/2\right)}\\
&\phantom{aaaa}\leq \sum_{y=-(\underline{v}+\alpha)t}^{(\underline{v}+\alpha)t} \P\left(\forall z\in [y,y+l-1],\eta_{T+(t-T)}(Y+z)=1, t/2\leq t-T\leq t\right)\\
&\phantom{aaaa}\leq 2{v}_{cp} t \int_{u=t/2}^t A\exp(-B(u\wedge l))d\P(T=u-t),\\
&\phantom{aaaa}\leq A't \exp(-B'(t\wedge l)),
\end{align*}
where we applied the Corollary~\ref{zerosCP} to the surviving contact process $\tilde{\eta}_u(\cdot)=\eta_{T+u}(Y+\cdot)$ with $\underline{v}\leq v_{cp}$. The fact that $T$ and $|Y|$ have exponentially decaying tails allows to conclude. If $x\neq 0$ we can do the same proof but we have to start the coupling from the point $x$ and to restart from the closest zero (instead of the front) when the contact process dies.
\end{proof}

\subsection{Zeros lemma}

In the following lemma we use repeatedly Corollary~\ref{cor:zerozone} to control the probability that, at time~$s$, at distance~$L$ from the front and over a distance~$M$, we have no 0-gap larger than~$l$. 

\begin{lemma}\label{zeros_lemma} Let $q>\overline{q}$. Let $s,l,M,L>0$ and $\sigma\in LO_0$.
\begin{enumerate}
\item If $L+M\leq 2\underline{v} s$ then there exists $c>0$ depending only on $p$ such that 
\[
\P_\sigma\left(\theta_{X(s)}\sigma_s\notin \mathcal{H}(L, L+M, l)\right)\leq  {(L+M)^2}\exp \left(-c\left(L \wedge l\right) \right).
\]
\item If $L+M \geq 2\underline{v} s$ and $\sigma\in \mathcal{H}(0, L+M, 2\underline{v} s )$ then there exists $c>0$ such that 
\[
\P_\sigma\left(\theta_{X(s)}\sigma_s\notin \mathcal{H}(L, L+M, l)\right)\leq \frac{ {s^2}}{L}\exp \left(-c\left( L\wedge l\right) \right)+M {s}\exp(-c( s\wedge l)).\]
\end{enumerate}
\end{lemma}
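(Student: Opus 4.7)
My plan is to bound the event $\{\theta_{X(s)}\sigma_s\notin\mathcal{H}(L,L+M,l)\}$ by a union bound over positions $y\in[X(s)+L,\,X(s)+L+M-l+1]$ where a 0-gap of length $l$ could sit, controlling each such event via Corollary~\ref{cor:zerozone} applied at well-chosen initial zeros and the front-position estimates~\eqref{atmostlinearfa} and Corollary~\ref{atleastlinearFA}.

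In Case~1 ($L+M\leq 2\underline{v}s$) the window has length at most $2\underline{v}s$, which matches the radius $\underline{v}s$ of the zero-zone grown at time $s$ from a single initial zero. For each potential position $y$ I would estimate
\[
\P\bigl(\sigma_s([y,y+l-1])\equiv 1,\ y-X(s)\in[L,L+M-l+1]\bigr)
\]
using the zero at $0$ (always present in $\sigma\in LO_0$) and Corollary~\ref{cor:zerozone} when $y\in[-\underline{v}s,\underline{v}s]$; when $y$ lies outside this range the constraint $y-X(s)\geq L$ forces $X(s)$ far from its typical location, which is exponentially unlikely by \eqref{atmostlinearfa} and Corollary~\ref{atleastlinearFA}. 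The $(L+M)^2$ prefactor is meant to absorb the union bounds on $y$ and on the front location.

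In Case~2 ($L+M\geq 2\underline{v}s$ and $\sigma\in\mathcal{H}(0,L+M,2\underline{v}s)$) the extra hypothesis provides initial zeros $0=x_0<x_1<\cdots<x_k$ in $[0,L+M]$ with consecutive spacings $\leq 2\underline{v}s$. Applying Corollary~\ref{cor:zerozone} to each $x_i$, the zero-zones $[x_i-\underline{v}s,\,x_i+\underline{v}s]$ overlap and jointly cover $[0,L+M]$ at time $s$; a union bound over the $O((L+M)/\underline{v}s)$ such zeros yields the second summand $Ms\exp(-c(s\wedge l))$. For the portion of the shifted window with negative absolute coordinate (close to the front) I would argue as in Case~1, using the zero at $0$ together with the front estimates, producing the first summand $(s^2/L)\exp(-c(L\wedge l))$.

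The main obstacle is obtaining the sharp $L\wedge l$ exponent. Corollary~\ref{cor:zerozone} by itself only gives $\exp(-c(s\wedge l))$, whereas we need to replace $s$ by $L$ whenever $L<s$. The gain in $L$ should come from finite-speed propagation from the front (Lemma~\ref{finitespeedpropagation}): for a 0-gap to appear at distance $L$ from the front, either the gap itself (size $l$) is atypical, or the front is atypically close to the gap (distance $L$), so we pay the minimum of the two. Making this dichotomy quantitative and reconciling the resulting prefactors with the stated $(L+M)^2$ in Case~1 and $s^2/L + Ms$ in Case~2 looks like the delicate bookkeeping step of the proof.
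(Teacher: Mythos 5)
There is a genuine gap, and it is the central idea of the paper's proof that is missing: you must use the front's \emph{own} positions at intermediate times as sources of zeros. In Case~1 you rely only on the initial zero at the origin plus the front-speed bounds, but that cannot work. Corollary~\ref{cor:zerozone} anchored at $0$ only controls the region $[-\underline{v}s,\underline{v}s]$ at time $s$, while the window $[X(s)+L,X(s)+L+M]$ sits next to the front, whose position is only known to lie in $[-\overline{v}s,-\underline{v}s]$. The stretch between $X(s)+L$ and $-\underline{v}s$ can have width of order $(\overline{v}-\underline{v})s$, and a 0-gap there is in no way tied to an atypical front location: $X(s)\leq -\underline{v}s-L$ is a perfectly typical event, so your claim that $y<-\underline{v}s$ together with $y-X(s)\geq L$ ``forces $X(s)$ far from its typical location'' is false. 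For the same reason your proposed dichotomy for the $L\wedge l$ exponent (``either the gap is atypical or the front is atypically close'') does not get off the ground: the window is \emph{by definition} at distance $L$ from the front, so there is nothing improbable to charge to the second alternative, and finite-speed propagation alone does not convert this into a factor $e^{-cL}$.

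The paper instead introduces times $s_i=i\Delta'$ with $\Delta=\frac{L}{\overline{v}-\underline{v}}\wedge s$, $\Delta'=\frac{s-\Delta}{n}$, observes that $X(s_i)$ is a zero of $\sigma_{s_i}$, and applies Corollary~\ref{cor:zerozone} via the Markov property at time $s_i$ to show that $[X(s_i)-\underline{v}(s-s_i),X(s_i)+\underline{v}(s-s_i)]$ has no gap larger than $l/2$ at time $s$, with error $C(s-s_i)e^{-c((s-s_i)\wedge l)}$. The speed bounds \eqref{eq:trajfront1}--\eqref{eq:trajfront2} (from \eqref{atmostlinearfa} and Corollary~\ref{atleastlinearFA} over the short lags $\Delta,\Delta'$) then guarantee that these intervals overlap and cover $[X(s)+L,X(s)+L+M]$ (in Case~2, together with the intervals $[x_i-\underline{v}s,x_i+\underline{v}s]$ grown from the initial zeros, which is the one part of your plan that matches the paper). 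The $L\wedge l$ exponent comes precisely from the choice $\Delta\sim L/(\overline{v}-\underline{v})$ for the last lag: both the zero-zone grown from $X(s-\Delta)$ and the front-displacement controls over time $\Delta$ cost only $e^{-c(L\wedge l)}$, and the prefactors $(L+M)^2$ and $s^2/L$ record the union over the $\sim (\Delta+k\Delta')/\Delta$, resp.\ $\sim s/\Delta\sim s/L$, intermediate times — a bookkeeping structure your single-anchor union bound over $y$ cannot reproduce.
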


\begin{proof} The strategy is the following: we consider a number of zeros which we know are present in the dynamics, either because they are present in the initial configuration or because they are well-chosen intermediate positions occupied by the front. For each of these zeros, we use Corollary~\ref{cor:zerozone} to guarantee that at time $s$, a given interval around them contains no gap larger than $l/2$. We then control that w.h.p.\@ the different intervals thus obtained cover $[X(s)+L,X(s)+L+M]$, and therefore $\theta_{X(s)}\sigma_s\in \mathcal{H}(L, L+M, l)$. The technique is illustrated by Figure~\ref{fig:zeroslemma}.

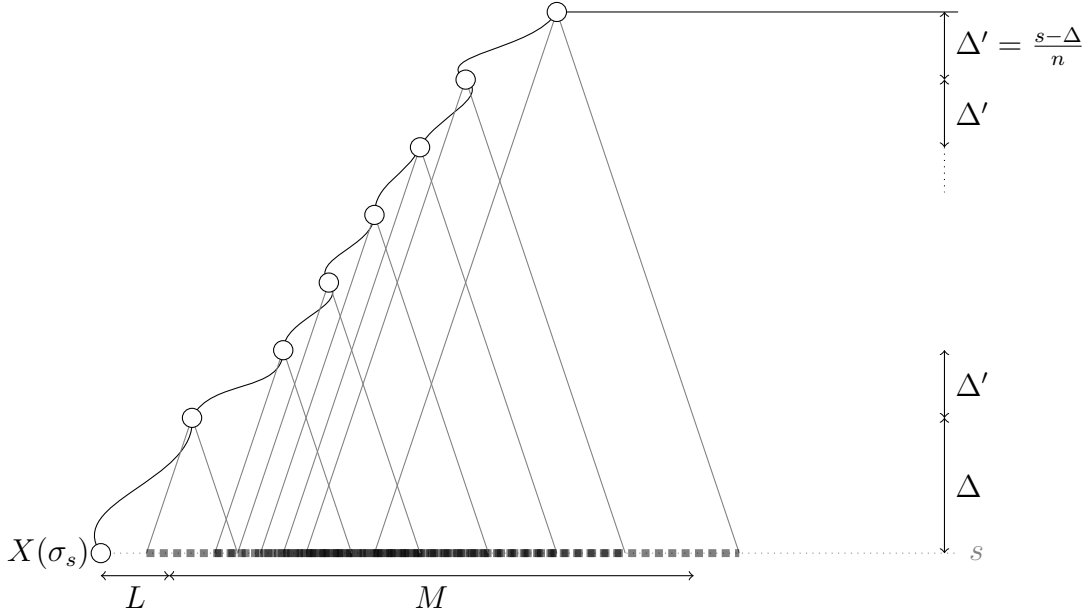
\begin{figure}[bt]
{\centering
\begin{tikzpicture}[scale=0.6]

%les courbes
\draw[-] (0,0) to[out=-120,in=145] (-2,-1.5) to[out=-35,in=65] (-3,-3) to[out=-115,in=90]  (-4,-4.5) to[out=-90,in=130] (-5,-6) to[out=-50,in=90]  (-6,-7.5) to[out=-90,in=65] (-8,-9) to[out=-85,in=125] (-10,-12);

%Les lignes et les temps
\draw (0,0)--(8.8,0);
\draw[dotted,gray] (-10,-12)--(8.8,-12) node[right]{$s$};
\draw[<->](8.5,0)--(8.5,-1.5) node[midway, right]{$\Delta'=\frac{s-\Delta}{n}$};
\draw[<->](8.5,-1.5)--(8.5,-3) node[midway, right]{$\Delta'$};
\draw[<->](8.5,-7.5)--(8.5,-9) node[midway, right]{$\Delta'$};
\draw[dotted](8.5,-3)--(8.5,-4);
\draw[<->](8.5,-9)--(8.5,-12) node[midway, right]{$\Delta$};

  \draw(-10,-12)  node[left]{$X(\sigma_s)$};
  \draw[<->] (-10,-12.5)--(-8.5,-12.5) node[below, midway]{$L$};
    \draw[<->] (-8.5,-12.5)--(3,-12.5) node[below, midway]{$M$};

%LES FRONTS
\foreach \pt in { (-10,-12)}  { 
  \draw[thick] \pt circle(0.2); 
  \fill[white] \pt circle(0.2);
}

\foreach \pt in { (0,0)} {
  \parsept{\x}{\y}{\pt}
	\draw[gray] \pt -- (\x-4-\y/3, -12);
	\draw[gray] \pt -- (\x+4+\y/3, -12);
	\draw[thick] \pt circle(0.2); \fill[white] \pt circle(0.2);
	\draw[dotted, line width=3pt, opacity=0.5] (\x-4-\y/3, -12)--(\x+4+\y/3, -12);
}

\foreach \pt in { (-2,-1.5)} {
  \parsept{\x}{\y}{\pt}
	\draw[gray] \pt -- (\x-4-\y/3, -12);
	\draw[gray] \pt -- (\x+4+\y/3, -12);
	\draw[thick] \pt circle(0.2); \fill[white] \pt circle(0.2);
	\draw[dotted, line width=3pt, opacity=0.5] (\x-4-\y/3, -12)--(\x+4+\y/3, -12);
}

\foreach \pt in {(-3,-3), (-4,-4.5), (-5,-6),(-6,-7.5)} {
  \parsept{\x}{\y}{\pt}
	\draw[gray] \pt -- (\x-4-\y/3, -12);
	\draw[gray] \pt -- (\x+4+\y/3, -12);
	\draw[thick] \pt circle(0.2); \fill[white] \pt circle(0.2);
	\draw[dotted, line width=3pt, opacity=0.5] (\x-4-\y/3, -12)--(\x+4+\y/3, -12);
}

\foreach \pt in { (-8,-9)} {
  \parsept{\x}{\y}{\pt}
	\draw[gray] \pt -- (\x-4-\y/3, -12);
	\draw[gray] \pt -- (\x+4+\y/3, -12);
	\draw[dotted, line width=3pt, opacity=0.5] (\x-4-\y/3, -12)--(\x+4+\y/3, -12);
	\draw[thick] \pt circle(0.2); \fill[white] \pt circle(0.2);
}

\end{tikzpicture}

\par}
\caption{There is no 0-gap bigger than $\ell$ on the interval $[L,L+M]$ seen from the front.}
\label{fig:zeroslemma}
\end{figure}

Let us define the intermediate times we consider. Let
\begin{eqnarray*}
\Delta&=& \frac{L}{\overline{v}-\underline{v}}\wedge s,\\
n&=&\left\lceil\frac{(s-\Delta)(\overline{v}-\underline{v})}{2\underline{v}\Delta }\right\rceil,\\
\Delta'&=&\frac{s-\Delta}{n},\\
s_i&=&i\Delta'\text{ for }i\in\{0,\ldots,n\}.
\end{eqnarray*}
For $i\in\{0,\ldots,n\}$, by Corollary~\ref{cor:zerozone} and the Markov property applied at time $s_i$, for some constants $C,c>0$
\begin{equation*}
\P_\sigma\left(\sigma_s\notin \mathcal{H}(X(s_i)-\underline{v}(s-s_i), X(s_i)+\underline{v}(s-s_i), l/2)\right)\leq C(s-s_i)e^{-c(s-s_i)\wedge l},
\end{equation*}
\emph{i.e.\@} $[X(s_i)-\underline{v}(s-s_i),X(s_i)+\underline{v}(s-s_i)]$ contains no gap larger than $l/2$ with high probability.

In case 2, we also need to use the zeros of the initial configuration. Let $0=:x_0< x_1 < \ldots< x_m$ be the ordered set of zeros located between $0$ and $L+M$ in the initial configuration $\sigma$. Then by Corollary~\ref{cor:zerozone}, for $i\in\{0,\ldots,m\}$,
\begin{equation*}
\P_\sigma\left(\sigma_s\notin \mathcal{H}(x_i-\underline{v}s, x_i+\underline{v}s, l/2)\right)\leq Cs e^{-c(s\wedge l)},
\end{equation*}
\emph{i.e.\@} $[x_i-\underline{v}s,x_i+\underline{v}s]$ contains no gap larger than $l/2$ with high probability.

The next step is to control the respective positions of the intervals we introduced to check that with high probability they cover $[X(s)+L,X(s)+L+M]$. Let us consider for all $i\in\{0,\ldots,n-1\}$ the events that 
\begin{eqnarray}
\underline{v}\Delta\leq &X(s_n)-X(s)&\leq \overline{v}\Delta\label{eq:trajfront1}\\
\underline{v}\Delta'\leq &X(s_{i})-X(s_{i+1})&\leq \overline{v}\Delta'.\label{eq:trajfront2}
%0\leq &X(s_{n-k})-X(s)&\geq \underline{v}(s-s_{n-k})= \underline{v}\left(\Delta+k\frac{s-\Delta}{n}\right)
\end{eqnarray}

Fix $k\in\{0,\ldots,n\}$. With our choice of $\Delta$ and $n$, if \eqref{eq:trajfront1} and \eqref{eq:trajfront2} hold for $i\in\{n-k,\ldots,n-1\}$, we have for $i\in\{n-k,\ldots,n-1\}$
\begin{align}
X(s_n)-\underline{v}\Delta &\leq X(s)+L,\\
\label{tintin}  X(s_{i})-\underline{v}(\Delta+(n-i)\Delta')&\leq X(s_{i+1})+\underline{v}(\Delta+(n-i-1)\Delta'),\\
X(s)+2\underline{v}\left(\Delta+k\Delta'\right)&\leq X(s_{n-k})+\underline{v}(s-s_{n-k}).
\end{align}
To derive~\eqref{tintin} we used that $\overline{v}\Delta'\leq \underline{v}(2\Delta+\Delta')$. 
These equations in turn imply that $\displaystyle \bigcup_{i=n-k}^n\left[X(s_i)-\underline{v}(s-s_i), X(s_i)+\underline{v}(s-s_i)\right]$ is a covering of $[X(s)+L,X(s)+2\underline{v}(\Delta+k\Delta')]$. 
Similarly, if $\sigma\in\mathcal{H}(0, L+M, 2\underline{v} s)$ then we know that $x_{i+1}-x_i\leq 2\underline{v} s $ for $i\in\{0,\ldots,m-1\}$ and $x_m\geq L+M-2\underline{v}s$. 
Consequently, if \eqref{eq:trajfront1} and \eqref{eq:trajfront2} hold for all $i\in\{0,\ldots,n\}$ then we have
\[
\bigcup_{i=0}^n\left[X(s_i)-\underline{v}(s-s_i), X(s_i)+\underline{v}(s-s_i)\right]\cup\bigcup_{j=1}^m[x_j-\overline{v}s,x_j+\overline{v}s]\supset[X(s)+L,L+M-\underline{v}s].
\]

\noindent\underline{Case 1:} $L+M\leq 2\underline{v}s$\\
Choose $k=\lceil \frac{L+M-2\underline{v}\Delta}{2\underline{v}\Delta'}\rceil$. Then $2\underline{v}(\Delta+k\Delta')\geq L+M$, and the above arguments and the bounds we have on the speed of the front yield
\begin{align*}
\P_\sigma\big(\theta_{X(s)}\sigma_s\notin& ~\mathcal{H}(L, L+M, l)\big) \\
\leq &\ \P_\sigma\left(\sigma_s\in\bigcup_{i=n-k}^n \mathcal{H}(X(s_i)-\underline{v}(s-s_i),X(s_i)+\underline{v}(s-s_i),l/2)^c\right)\\
&+\ \P_\sigma\left(\text{\eqref{eq:trajfront1} is not satisfied }\right)\\
&+\ \P_\sigma\left( \exists i\in\{n-k,\ldots,n-1\}\text{ for which \eqref{eq:trajfront2} is not satisfied}\right)\\
\leq  &\ k(k\Delta'+\Delta)c_1\exp \left(-c_2\left(\left(\Delta'+\Delta\right)\wedge l\right)\right)\\
&\quad+A\exp\left(-B \Delta\right)+kA\exp\left(-B \Delta'\right)\\
\leq &\ c_1'(L+M)^2\exp \left(-c_2'\left(L\wedge l\right) \right),
\end{align*}
where we used Corollary~\ref{atleastlinearFA} and Lemma~\ref{finitespeedpropagation}.\\

\noindent\underline{Case 2:} $L+M\geq 2\underline{v}s$\\
Our arguments imply that 
\begin{align*}
\P_\sigma\big(\theta_{X(s)}\sigma_s\notin& ~\mathcal{H}(L, L+M, l)\big)\\
 \leq &\ \P_\sigma\left(\sigma_s\in\bigcup_{i=0}^n \mathcal{H}(X(s_i)-\underline{v}(s-s_i),X(s_i)+\underline{v}(s-s_i),l/2)^c\right)\\
&+\ \P_\sigma\left(\sigma_s\in\bigcup_{i=1}^m \mathcal{H}(x_i-\underline{v}s,x_i+\underline{v}s,l/2)^c\right)\\
&+\ \P_\sigma\left(\text{\eqref{eq:trajfront1} is not satisfied }\right)\\
&+\ \P_\sigma\left( \exists i\in\{0,\ldots,n-1\}\text{ for which \eqref{eq:trajfront2} is not satisfied}\right)\\
\leq  &\ c_1'\frac{s^2}{L}\exp \left(-c_2'\left(L\wedge l\right) \right)+ Ms\exp(-c( s\wedge l)).\qedhere
\end{align*}
\end{proof}

\section{Relaxation far from the front}\label{section:decorrelation}
Now, we will use the bounds on the speed of the front (\eqref{atmostlinearfa} and Corollary \ref{atleastlinearFA}) and relaxation results (Corollary~\ref{cor:relaxation}) to prove a relaxation far from the front. To do that we need to decorrelate the front trajectory from the interval in which we want to relax.
\begin{theorem}
\label{thm:decorrelation}
Let $q>\overline{q}$ and $\sigma\in LO_0$. Let $\alpha<1/2$ and $\delta>0$. There exists $c>0$ such that for any $M\leq e^{\delta t^\alpha}$, any $f$ with support in $[0,M]$, $\mu(f)=0$ and $\|f\|_\infty\leq 1$, if $\sigma\in\mathcal{H}\left(\underline{v}t,M+(4\overline{v}-\underline{v})t,\sqrt{t}\right)$, then
%Let $L\leq Ct$, $M\leq e^{L^\alpha}$ with $\alpha<1/2$. Then there exist $\delta, c>0$ such that if $\sigma\in\mathcal{H}\left(0,M,2\underline{v}\sqrt{t}\right)$, for any $f$ with support in $[0,M]$, $\mu(f)=0$ and $\|f\|_\infty\leq 1$,
%\noteO{
%The estimate would be better with a stronger hypothesis on $M$. For instance, $M\lesssim L$ would give exponential decay, in some cases without condition on the initial configuration. But I don't think we need this result for the coupling.}
\begin{equation*}
\left|\E_{\sigma}\left[f\left(\theta_{X(\sigma_t)+ {3\overline{v}t}}\sigma_t\right)\right]\right|\leq e^{-c\sqrt{ {t}}}.
\end{equation*}
\end{theorem}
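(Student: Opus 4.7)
The plan is to combine the front speed bounds with finite speed of propagation in order to reduce the problem to fixed-shift relaxation estimates handled by Corollary~\ref{cor:relaxation}.

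\emph{Localization of the front.} By~\eqref{atmostlinearfa} and Corollary~\ref{atleastlinearFA}, outside an event of probability at most $2e^{-ct}$ one has $X(\sigma_t)\in[-\overline{v}t,-\underline{v}t]$, so that the random shift $Y:=X(\sigma_t)+3\overline{v}t$ lies in $I:=[2\overline{v}t,(3\overline{v}-\underline{v})t]$. The contribution coming from the complement of this event is absorbed in the final error.

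\emph{Decoupling via finite speed.} For each fixed $x\in I$ consider the ``front-dependence region'' $R^F:=[-2\overline{v}t,\overline{v}t]$ and the ``target-dependence region'' $R^T_x:=[x-\overline{v}t,x+M+\overline{v}t]$. Because $x\geq 2\overline{v}t$ these two intervals are essentially disjoint, separated by a gap whose width matches the $\overline{v}t$ threshold of Lemma~\ref{finitespeedpropagation}. Outside an event of probability $\leq Cte^{-\overline{v}t}$, one can therefore couple $\sigma_t$ with two auxiliary finite-volume FA-1f processes on $R^F$ and $R^T_x$ (with zero boundary conditions), built from disjoint portions of the graphical data, in such a way that the first agrees with $\sigma_t$ on $[-\overline{v}t,0]$ (and in particular determines $X(\sigma_t)$) while the second agrees with $\sigma_t$ on $[x,x+M]$ (and therefore determines $f(\theta_x\sigma_t)$). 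The two auxiliary processes are independent, so on this good event $\mathbf{1}_{Y=x}$ and $f(\theta_x\sigma_t)$ are independent as well.

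\emph{Fixed-shift relaxation and conclusion.} For any $x\in I$ one checks the inclusion
\[
[x-\overline{v}t,\,x+M+\overline{v}t]\subset[\underline{v}t,\,M+(4\overline{v}-\underline{v})t],
\]
so the gap hypothesis $\sigma\in\mathcal{H}(\underline{v}t,M+(4\overline{v}-\underline{v})t,\sqrt{t})$ yields $\sigma\in\mathcal{H}(x-\overline{v}t,x+M+\overline{v}t,\sqrt{t})$. Since $f\circ\theta_x$ has support in $[x,x+M]$ with $M\leq e^{\delta t^\alpha}$, $\alpha<1/2$, and $\mu$-mean zero, Corollary~\ref{cor:relaxation}~\eqref{eq:relaxation} applied to $f\circ\theta_x$ gives $|\E_\sigma[f(\theta_x\sigma_t)]|\leq \tfrac{1}{c'}e^{-c'\sqrt{t}}$. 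Writing
\[
\E_\sigma[f(\theta_Y\sigma_t)\mathbf{1}_E]=\sum_{x\in I}\E_\sigma[\mathbf{1}_{Y=x}f(\theta_x\sigma_t)]
\]
and invoking the decoupling, each summand is bounded by $\P(Y=x)|\E_\sigma[f(\theta_x\sigma_t)]|$ plus a coupling error $O(te^{-\overline{v}t})$; summing over $x\in I$ (of which there are $O(t)$) and combining with the localization error yields the desired bound $e^{-c\sqrt{t}}$.

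\emph{Main obstacle.} The delicate point is the decoupling step: making rigorous, via auxiliary finite-volume processes and finite speed of propagation, the statement that $X(\sigma_t)$ and $\sigma_t|_{[x,x+M]}$ depend, up to negligible probability, on disjoint portions of the graphical construction. The specific constants appearing in the statement (the shift $3\overline{v}t$ and the boundaries $\underline{v}t$, $M+(4\overline{v}-\underline{v})t$ of the gap hypothesis) are tuned exactly so that the separation between $R^F$ and $R^T_x$ exceeds the $\overline{v}t$ threshold of finite speed, and so that the gap hypothesis on the initial configuration covers $R^T_x$ uniformly in $x\in I$.
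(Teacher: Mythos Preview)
Your proposal follows the same three-step skeleton as the paper's proof: localize the front to $[-\overline{v}t,-\underline{v}t]$, decouple the front indicator from the shifted observable via finite speed of propagation, and then apply the fixed-shift relaxation estimate of Corollary~\ref{cor:relaxation}. The verification that the gap hypothesis on $\sigma$ covers the interval needed for Corollary~\ref{cor:relaxation}, uniformly over the admissible shifts, is exactly the computation the paper carries out.

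The only wrinkle is in your decoupling step. As you set it up, $R^F=[-2\overline{v}t,\overline{v}t]$ and $R^T_x=[x-\overline{v}t,x+M+\overline{v}t]$ have a gap of width $x-2\overline{v}t$, which vanishes at the left endpoint $x=2\overline{v}t$ of $I$; so the claim that the separation ``matches the $\overline{v}t$ threshold of Lemma~\ref{finitespeedpropagation}'' is not quite right, and in fact the two auxiliary finite-volume processes would share clocks on the overlap and fail to be independent. The paper sidesteps this by not introducing auxiliary processes at all: it simply observes that $\max_{u\leq t}|X(\sigma_u)|\leq\overline{v}t$ with high probability, and that on the complement of the event that a chain of clock rings links $[-\overline{v}t,\overline{v}t]$ to the point $3\overline{v}t-y$ (distance at least $\overline{v}t$), the indicator $\mathbf{1}_{X(\sigma_t)=-y}$ and $f(\theta_{3\overline{v}t-y}\sigma_t)$ are measurable with respect to disjoint portions of the graphical construction and hence independent. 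Your argument is easily repaired along the same lines: the relevant distance is between the front range $[-\overline{v}t,\overline{v}t]$ and the \emph{support} $[x,x+M]$, which is $x-\overline{v}t\geq\overline{v}t$, not between the enlarged cones $R^F$ and $R^T_x$.
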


\begin{remark}
 {The proof strategy of the equivalent theorem in \cite{front_east} combined with Lemma \ref{zeros_lemma} and Corollary~\ref{cor:relaxation} would also allow us to give a result of the type ``in the box $[L,L+M]$ seen from the front, the distribution is within $e^{-c\sqrt{L\wedge t}}$ of $\mu$ in total variation distance'', under suitable hypotheses on the initial configuration (depending on the respective regimes of $L,t,M$). A proper statement would however be too technical for the purpose of the present paper and we restrict to the above.} 
\end{remark}

\begin{proof}
% Let $s=\left(t-\frac{L}{3\overline{v}}\right)\vee 0$. 
Using Corollary~\ref{atleastlinearFA} and Equation~\eqref{atmostlinearfa} we can write that
\[
\E_{\sigma}\left[f\left(\theta_{X(\sigma_t)+ {3\overline{v}t}}\sigma_t\right)\right]= {\sum_{y=\underline{v}t}^{\overline{v}t} \E_{\sigma}\left[\1_{X(\sigma_t)=-y}f(\theta_{ {3\overline{v}t}-y}\sigma_{t})\right] +O\left(e^{-\gamma t}\right).}
\]
%where $\tilde{\sigma}_{t-s}$ denotes the configuration at time $t-s$ starting from $\sigma_s$.
By finite speed of propagation, we have with probability $1-O(e^{- {t}})$ 
\[
\max_{u\in[ {0},t]}|X(\sigma_u)|\leq  {\overline{v}t}.
\]
So, %since $L\geq 3\overline{v}(t-s)$ by construction, by Lemma~\ref{finitespeedpropagation}, 
 {for $y\leq \overline{v}t$}, the probability that there exists a sequence of successive clock rings between $ {3\overline{v}t}-y$ and $\displaystyle \max_{u\in[ {0},t]} |X(\sigma_u)|$ is less than $O(e^{- {t}})$. % if $y\in[\underline{v}(t-s),\overline{v}(t-s)]$. 
 On the event where there is no such sequence, $\1_{X(\sigma_t)=-y}$ and $f(\theta_{ {3\overline{v}t}-y}\sigma_{ {t}}$) are independent. Therefore,

\begin{multline}
\E_{\sigma}\left[f\left(\theta_{X(\sigma_t)+ {3\overline{v}t}}\sigma_t\right)\right]=\sum_{y=\underline {v} {t}}^{\overline{v} {t}}  {\P_{\sigma}\left(X(\sigma_t)=-y\right)\E_{\sigma}\left[f(\theta_{ {3\overline{v}t}-y}{\sigma}_{t})\right]} 
+O\left(e^{-\gamma {t}}+e^{- {t}}\right).
\end{multline}
In order to apply our relaxation result Corollary~\ref{cor:relaxation} on the interval 
\[\left[ {3\overline{v}t}-y,  {3\overline{v}t}-y+M\right],\]
 {we need to check that
\begin{align*}
3\overline{v}t-y-\overline{v}t&\geq \underline{v}t\\
3\overline{v}t-y+M+\overline{v}t&\leq M+(4\overline{v}-\underline{v})t,
\end{align*}
 which is clearly satisfied if $y\in[\underline{v}t,\overline{v}t]$. Therefore, our assumption on $\sigma$ and Corollary~\ref{cor:relaxation} imply that}
% let us consider the quantities
%\begin{align*}
%L'&=L-y-\overline{v}(t-s)\in\left[L/3,\left(\frac{2}{3}-\frac{\ \underline{v}\ }{\overline{v}}\right)L\right],\\
%M'&=M+2\overline{v}(t-s),\\
%l&=\sqrt{t-s},
%\end{align*}
%and the event representing the amount of zeros we need
%\[
%\mathcal{Z}=\left\{ \theta_{X(\sigma_s)}\sigma_s \in \mathcal{H}(L',L'+M',l)\right\}.
%\]
%Then, we can write
%\begin{multline}
%\E_{\sigma}\left[f\left(\theta_{X(\sigma_t)+ {3\overline{v}t}}\sigma_t\right)\right]=
%\sum_{y=\underline{v}t}^{\overline{v}t} \E_{\sigma}\left[\P_{\sigma_s}\left(X(\sigma_t)-X(\sigma_s)=-y\right)\1_{\mathcal{Z}}\E_{\sigma_s}\left[f(\theta_{X(\sigma_s)-y+L}\tilde{\sigma}_{t-s}\right]\right] \\
%+O\left(e^{-\gamma(t-s)}+e^{-(t-s)}+\P(\mathcal{Z}^c)\right).
%\end{multline}
%On the event $\mathcal{Z}$ the conditions required in \eqref{eq:relaxation} of Corollary~\ref{cor:relaxation} are satisfied and we have that
\[
 {\left|\E_{\sigma}\left[f(\theta_{ {3\overline{v}t}-y}{\sigma}_{t})\right]\right|\leq C e^{-c\sqrt{t}},}
\]
which in turn yields the desired result.
%Finally
%\[
%\E_{\sigma}\left[f(\theta_{X(\sigma_t)+L}\sigma_t)\right]=O\left(e^{-\gamma(t-s)}+e^{-(t-s)}+\P(\mathcal{Z}^c)+e^{-c'\sqrt{L}}\right).
%\]
%Now, we use Lemma~\ref{zeros_lemma} to control $\P(\mathcal{Z}^c)$. We have the three following cases:
%\begin{enumerate}
%\item if $L'+M'=L-y+M+\overline{v}(t-s)\leq 2\underline{v}s$, we are in the first case of Lemma~\ref{zeros_lemma} and 
%\[
%\P(\mathcal{Z}^c)\leq  {(L+M)^2}e^{-c(L\wedge l)}\leq e^{-c'\sqrt{L}};
%\]
%\item if $s\leq \sqrt{t}$ then $\sigma\in\mathcal{H}(0,L+M,2\underline{v}\sqrt{t})$ implies $\mathcal{Z}$ (using Corollary~\ref{cor:zerozone} for example);
%\item if $L'+M'=L-y+M+\overline{v}(t-s)\geq 2\underline{v}s $ and $s\geq\sqrt{t}$ then $\sigma\in\mathcal{H}(0,L+M,2\underline{v}\sqrt{t})$ implies that $\sigma\in\mathcal{H}(0,L+M,2\underline{v}s)$ and the second case of Lemma~\ref{zeros_lemma} yields that
%\[
%\P(\mathcal{Z}^c)\leq \frac{ {s^2}}{L}e^{-c(L\wedge l)}+M {s}e^{-c(s\wedge l)}\leq e^{-c'\sqrt{L}}.\qedhere
%\]
%\end{enumerate}
\end{proof}

\section{Invariant measure: proof of Theorem~\ref{thm:coupling}}\label{section:coupling}

\begin{proof}[Proof of Theorem~\ref{thm:coupling}] We start with two initial configurations $\sigma$ and $\sigma'$ in $LO_0$ and we prove that there exist $d^*>0$, $c>0$ (independent of $(\sigma, \sigma')$) such that for $t$ large enough
\begin{align}
\label{thmconfig}
\|\tilde{\mu}_t^{\sigma}-\tilde{\mu}_t^{\sigma'}\|_{[0,d^*t]}\leq e^{-ce^{(\log t)^{1/4}}},
\end{align}
where $\tilde{\mu}_t^{\sigma}$ is the distribution of the configuration seen from the front at time $t$, that is $\theta_{X(\sigma_t)}\sigma_t$.

Thanks to Theorem~\ref{thm:decorrelation} we know that far from the front the configurations starting respectively from $\sigma$ and $\sigma'$ will be close to a configuration sampled by the equilibrium measure, so they will be close tp one another (for the total variation distance). Following the strategy of~\cite{front_east,GLM15}, we construct a coupling where we use this property and where we wait until the configurations also coincide near to the front. Given $\epsilon>0$ and $t>0$, we fix the following quantities:

\begin{align*}
t_0&=(1-\epsilon )t\\
\Delta_1&=e^{(\log t)^{1/4}}\\
\Delta_2&=(\log t)^{3/4}\\
\Delta&=\Delta_1+\Delta_2.
\end{align*}

The 'perfect procedure' would be:
\begin{enumerate}
\item[Step 0.] Both configurations have a lot of zeros at time $t_0$
\item[Step 1.] Thanks to these zeros, they both closely match equilibrium far from the front after a time-lag $\Delta_1$. Thus, they match each other for the same conditions. 
\item[Step 2.] Then, during a time-lag $\Delta_2$, a very favorable event happens and the configurations coincide also near to the front.
\end{enumerate} 
Roughly speaking, the steps 0 and 1 are very likely and the step 2 has very small probability. So we will repeat step 2 a lot in order to make it succeed. In practice we also need to repeat step 1 because multiple tries of step 2 could destroy the assets of step 1. To do that, the time $t-t_0$ will be split in $N=\left\lfloor \frac{\epsilon t}{\Delta}\right\rfloor$ repetitions of steps 1 and 2, respectively of duration $\Delta_1$ and $\Delta_2$. For $n\in\{0,\ldots,N\}$, let $t_n=t_0+n\Delta$ (resp. $s_{n+1}=t_n+\Delta_1$) the instants at which each of the $N$ repetitions of step 1 (resp. step 2) begins. The repetition of the N steps is illustrated by Figure~\ref{fig:coupling} and the steps 1 and 2 are illustrated by Figure~\ref{fig:couplingzoom}.

\begin{figure}[p]
{\centering
\begin{tikzpicture}[scale=0.55]

%les courbes

\draw[-] (0,0) to[out=-120,in=145] (-2,-1.5) to[out=-35,in=65] (-3,-3) to[out=-115,in=90]  (-4,-4.5) to[out=-90,in=130] (-5,-6) to[out=-50,in=90]  (-6,-7.5) to[out=-90,in=65] (-8,-9) to[out=-85,in=125] (-10,-12.5) to[out=-55,in=95] (-11,-13.3); %to[out=-85,in=55] (-12.5,-15);

%Les lignes et les temps
\draw (0,0)--(8.8,0) node[midway, above]{$\sigma,\sigma'\in LO_0$}; %t=0

\draw[<->](8.5,0)--(8.5,-8) node[midway, right]{$(1-\varepsilon )t$} node[midway, left]{$t_0=$}; %ligne verticale

\draw[line width=8pt, dashed, gray, opacity=0.4] (-5,-8)--(7,-8) node[midway, above, opacity=1]{$\mathcal{Z}_0$ (cf Figure~\ref{fig:zeroslemma})}; %Z0

\draw[dashed](-6.1,-8)--(8.5,-8)node[right, scale=0.8]{$t_0$};;

\draw[<->](8.5,-8)--(8.5,-9) node[midway, right]{$\Delta$};
\draw[dotted](-7.2,-8.5)--(8.5,-8.5);
\draw[dashed](-7.8,-9)--(8.5,-9);

 \fill[white] (0,-8.5) circle(0.3);
 \draw[thick] (0,-8.5) circle(0.3) node[scale=0.7]{T};
 
\draw[<->](8.5,-9)--(8.5,-10) node[midway, right]{$\Delta$};
\draw[<->,dotted](8.5,-10)--(8.5,-11);% node[midway, right]{$\Delta$};
\draw[<->](8.5,-11)--(8.5,-12) node[midway, right]{$\Delta$};

\draw[dotted](-8,-9.5)--(8.5,-9.5);
  \fill[white] (0,-9.5) circle(0.3);
 \draw[thick] (0,-9.5) circle(0.3) node[scale=0.7]{T};
 
\draw[dashed](-8.3,-10)--(8.5,-10);
%BLANC entre -10 et -11

\draw[dashed](-9.5,-11)--(8.5,-11);
\draw[dotted](-9.8,-11.5)--(8.5,-11.5);
\fill[white] (0,-11.5) circle(0.3);
 \draw[thick] (0,-11.5) circle(0.3) node[scale=0.7]{S};

\draw[dashed](-10,-12)--(8.5,-12);
\draw[dotted](-9.8,-12.5)--(8.5,-12.5);
  \fill[white] (0,-12.5) circle(0.3);
 \draw[thick] (0,-12.5) circle(0.3) node[scale=0.7]{S};
%\draw[dotted](-9.9,-13)--(8.5,-13);
\draw[<->](8.5,-12)--(8.5,-13) node[midway, right]{$\Delta$};

\draw[dashed] (-10.7,-13)--(10,-13) node[right,scale=0.8]{$t_N$}; %t_N
\draw[dashed,line width=2pt] (-11,-13.3)--(11,-13.3) node[right,scale=0.9]{$t$}; %derniere ligne
 
%fronts
\draw[thick] (-11,-13.3) circle(0.2);% node[left]{$X(\sigma_t)$};
 \fill[white] (-11,-13.3) circle(0.2);
 \draw[thick] (0,0) circle(0.2);
  \fill[white] (0,0) circle(0.2);

\end{tikzpicture}

\par}
\caption{Coupling of the evolutions from distinct initial configurations: $N$ repetitions of the special coupling during a time lag $\Delta$. The labels T correspond to trials (Steps 1-2, detailed in Figure~\ref{fig:couplingzoom}), where the coupling attempts to match the two configurations. After the first success, the standard coupling maintains the matching up to time $t$; labels S refer to the first item in the coupling construction (see Figure~\ref{fig:deperdition}).} 
\label{fig:coupling}
\end{figure}
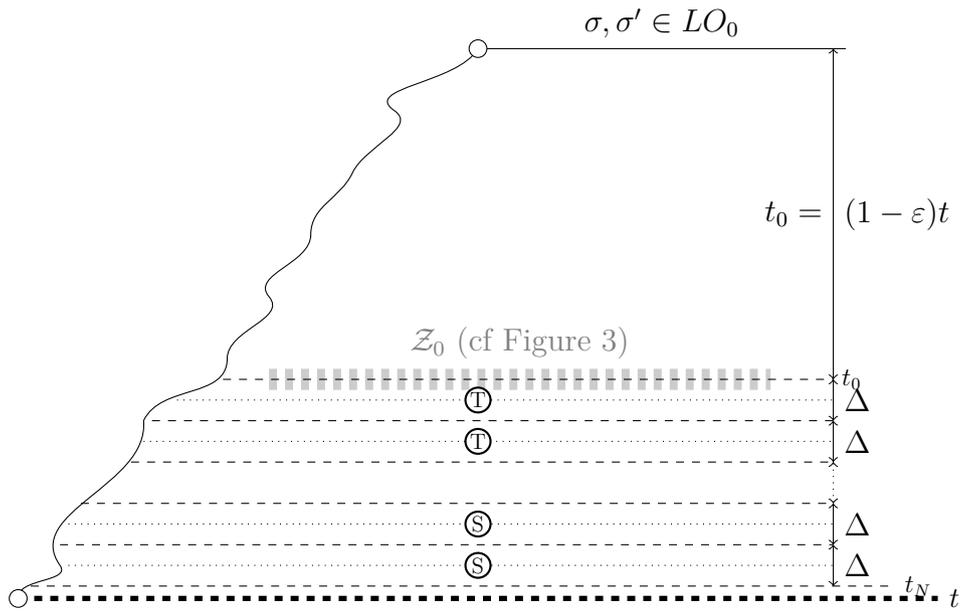

\begin{figure}[p]
{\centering

\begin{tikzpicture}[scale=0.7]
%LES LIGNES
\draw[-] (0,0)--(20,0);
\draw[-] (3,3)--(20,3);
\draw[-] (8,7)--(20,7);
%LES TEMPS
\draw (20.5,7) node[right]{$t_n=t_0+n\Delta$};
\draw (20.5,3) node[right]{$s_{n+1}=t_n+\Delta_1$};
\draw (20.5,0) node[right]{$t_{n+1}=s_{n+1}+\Delta_2$};
\draw[<->] (20.2,7)--(20.2,3) node[right, midway]{$\Delta_1= e^{(\log t)^{1/4}}\ll t^{\epsilon}$};
\draw[<->] (20.2,3)--(20.2,0) node[right, midway]{$\Delta_2= (\log t)^{3/4}$};
%\draw[<->] (20.4,7)--(20.4,0) node[right, midway]{$\Delta$};

%l'intervalle de zeros Zn (WCS)
\draw[line width=8pt, dashed, gray, opacity=0.6] (9,7)--(19.5,7) node[midway, above, opacity=1]{$\mathcal{Z}_n$} node[midway, below, opacity=1]{$\mathcal{H}(\sqrt{\Delta_1})$};
\draw[<->,gray] (8,7.5)--(9,7.5) node[gray,above,midway,scale=0.8]{$\underline{v} \Delta_1$};%$t_n^{\epsilon}$};   %D_1
\draw (19.5,7) node[above,scale=0.8]{$2\underline{v}t_n$};%${$v_{\min}t_n$};
\draw[dotted] (9,7)--(11,3);
\draw[dotted] (19.5,7)--(17.5,3)--(16,0);

%%%%%%%%%%%%   temps intermediaire

\fill[opacity=0.5,pattern=crosshatch] (3,2.8) rectangle (11,3.2);
\draw (5,3) node[below, opacity=1,scale=0.8]{indep coupling};
%\draw[line width=8pt, green, opacity=0.2] (3,3)--(11,3)  node[near start, below, opacity=1,scale=0.8]{indep coupling};

\fill[opacity=0.7,pattern=crosshatch] (17.5,2.8) rectangle (20,3.2);
%\draw[line width=8pt, green, opacity=0.2] (17.5,3)--(20,3);

%l'intervalle Lamnda_n 
%\draw[line width=8pt, red, opacity=0.2] (11,3)--(17.5,3) node[midway, above, opacity=1]{$\Lambda_n$} node[midway, below, opacity=1, scale=0.8]{maximal coupling};
\fill[gray!50,opacity=0.5,pattern=north east lines] (11,2.8) rectangle (17.5,3.2);
 \draw(14,3)  node[below, opacity=1, scale=0.8]{maximal coupling} node[above, opacity=1]{$\Lambda_n$}; 

%\draw (11,3) node[above,scale=0.8]{$D'_1=3\overline{v}\Delta_1$};

\draw[line width=8pt, dashed,  opacity=0.6] (11,3)--(12,3);
%\draw[<->, red] (12,3.5)--(14,3.5) node[above, near start, scale=0.6]{$v_{\max}\Delta_2$};

%l'intervalle  de zeros Z'n (SSC)
\draw[line width=8pt, dashed, gray, opacity=0.6] (5,3)--(11,3) node[midway, above, opacity=1]{$\mathcal{Z}'_n$} node[midway, below, opacity=1]{$\mathcal{H}(\sqrt{\Delta_2}/2)$};
\draw[gray,<->] (3.1,3.5)--(5,3.5) node[above,midway,gray,scale=0.8]{$ \sqrt{\Delta_2}/2$};%$\kappa\epsilon\log t$};  %D_2

\draw[<->, gray] (9,3.5)--(11,3.5) node[midway, above,scale=0.8]{$\overline{v}\Delta_1$}; 
\draw[<->, gray] (17.5,3.5)--(19.5,3.5) node[midway, above,scale=0.8]{$\overline{v}\Delta_1$};
\draw[<-, black] (17.5,2.8)--(18,2.2) node[black, below,scale=0.7]{$\geq 2\underline{v}s_{n+1}-(\overline{v}+\underline{v})\Delta_1$};
%\draw (17.5,3) node[below, scale=0.8]{$2\underline{v}s_{n+1}-2\overline{v}\Delta_1$};%$v_{\min}s_n$};

%%%%%%%%%%%%%    temps final

 \draw[line width=6pt, gray!90,opacity=0.5] (12,0)--(20,0);
 \draw (16,0) node[below,scale=0.8]{basic coupling};
 
 %\draw[line width=8pt, red, opacity=0.2] (3,0)--(12,0)  node[midway, below, opacity=1, scale=0.8]{maximal coupling}; %node[midway, above, opacity=1]{?}
 \fill[gray!50,opacity=0.5,pattern=north east lines] (3,-0.2) rectangle (12,0.2);
 
 \draw(7.5,0)  node[below, opacity=1, scale=0.8]{maximal coupling}; 
  
 %\draw[<->,red] (7,-1)--(12,-1) node[midway, below]{$e^{\Delta_2^\alpha}$};
  %\draw[<->,gray] (5,-1)--(7,-1) node[midway, below,scale=0.8]{$v_{\max}\Delta_2$};
%?\draw (7,0) node[above]{?};

  \draw[line width=6pt, gray!90,opacity=0.5] (0,0)--(3,0);
  \draw (1.5,0) node[below, scale=0.8]{basic coupling};
 
% \draw[red, dotted] (14,3)--(12,0); 
%  \draw[gray, dotted] (5,3)--(7,0); 
%ZERO COMMUN 
 \draw[line width=2pt] (12,3)--(12,0);
 \draw[thick] (12,3) circle(0.2);
 \fill[white] (12,3) circle(0.2);
  \draw[thick] (12,0) circle(0.2) node[below]{$x^*$};
 \fill[white] (12,0) circle(0.2);
 \draw[line width=2pt] (3,3)--(3,0);
 \draw[thick] (3,0) circle(0.2);
  \fill[white] (3,0) circle(0.2);

\draw[<->] (11,2.5)--(12,2.5) node[below, near start, scale=0.6]{$\le \frac{\sqrt{\Delta_2}}{2}$};

%%%%%%%% trajectoire du front

 \draw[-] (0,0) to[out=30,in=-145] (3,3); %to[out=-90,in=45] (-10,0);
 \draw[-] (3,3) to[out=90,in=-100] (8,7);% to[out=-60,in=90] (-7,8) to[out=-90,in=45] (-10,0);

%%%%%%%%   LES FRONTS
 \draw[thick] (0,0) circle(0.3);
 \fill[white] (0,0) circle(0.3);
 
 \draw[thick] (3,3) circle(0.3);
 \fill[white] (3,3) circle(0.3);
 
 \draw[thick] (8,7) circle(0.3);
 \fill[white] (8,7) circle(0.3);

\draw[<->] (0,-1)--(16,-1) node[midway, below]{$\geq I_{n+1}=2\underline{v}t_{n+1}-(\overline{v}+\underline{v}){\Delta}(n+1)$};

%%%%% LEGENDES

\fill[pattern=crosshatch] (0,-3) rectangle (1,-2.5);
\draw (1.1,-2.75) node[right]{independent coupling};

\fill[gray!90,opacity=0.5] (0,-4) rectangle (1,-3.5);
\draw (1.1,-3.75) node[right]{basic coupling};

\fill[gray!50,opacity=0.5,pattern=north east lines] (0,-5) rectangle (1,-4.5);
\draw (1.1,-4.75) node[right]{maximal coupling};

\draw[line width=8pt, dashed, gray, opacity=0.6] (0,-5.75)--(1,-5.75) node[opacity=1, black, right]{zeros property};

\end{tikzpicture}

\par}
\caption{Coupling of the evolutions from distinct initial configurations: Steps 1 and 2.}
\label{fig:couplingzoom}
\end{figure}
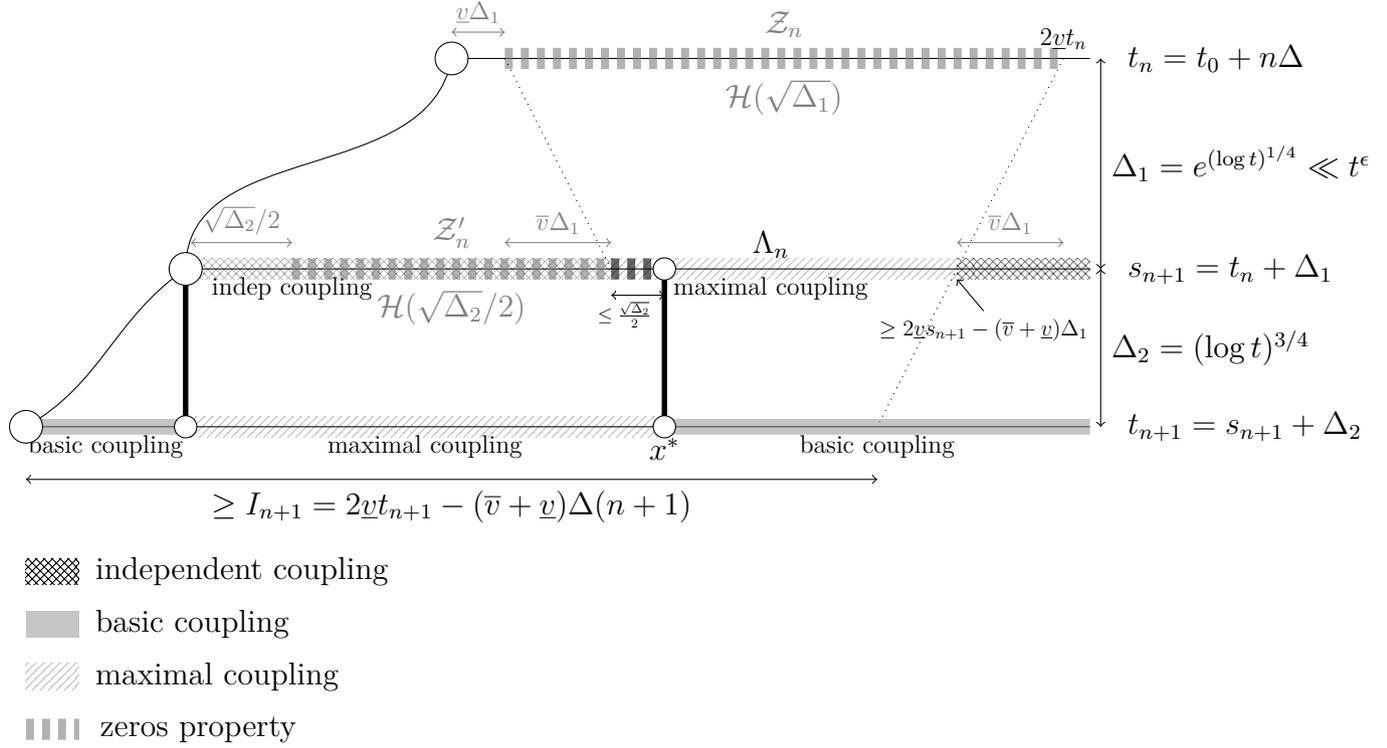

During the description of the precise procedure, we will use the basic coupling which consists in making the two configurations evolve according to the graphical representation using the same Poisson clocks and coin tosses (as we did in Section~\ref{graphical} between the FA-1f and contact processes). Note that whenever we use the basic coupling in our construction, we mean the basic coupling between two FA-1f processes with generator given by \eqref{genFA} (and not to processes ``seen from the front'').
We will also use a trickier coupling: the $\Lambda-$maximal coupling, denoted by $MC_{\Lambda}(\mu,\mu')$ where $(\mu,\mu')$ are two probability measures on $\Omega$ and $\Lambda$ is a finite box of $\Z$. It is defined as follows:
\begin{enumerate}
\item we sample $(\sigma,\sigma')_{|\Lambda\times\Lambda}$ according to the maximal coupling (which achieves the total variation distance see e.g.~\cite{levin}) of the marginals of $\mu$ and $\mu'$ on $\Omega_{\Lambda}$;
\item we sample $\sigma_{|\Z\setminus\Lambda}$ and $\sigma_{|\Z\setminus\Lambda}'$ independently according to their respective conditional distribution $\mu(\cdot|\sigma_{|\Lambda})$ and $\mu'(\cdot|\sigma'_{|\Lambda})$.
\end{enumerate}
We are now ready to recursively define a coupling $\mathcal{P}^{(n)}$ for $(\tilde{\mu}^{\sigma}_{t_n}, \tilde{\mu}^{\sigma'}_{t_n})$, with $n\in\{0,\ldots,N\}$, that is, a coupling $(\tilde{\sigma}_{t_n},\tilde{\sigma}'_{t_n})$ between the configurations \emph{seen from the fronts} at time $t_n$ (\textit{i.e.} $\tilde{\sigma}_{t_n}\sim\theta_{X(\sigma_{t_n})} \sigma_{t_n}$ and $\tilde{\sigma}'_{t_n}\sim\theta_{X(\sigma'_{t_n})} \sigma'_{t_n}$).

\begin{itemize}
\item \textbf{(Step 0)} %We choose product because we do not use any property of coupling for this step 
$\mathcal{P}^{(0)}$ is the trivial product coupling: to sample $(\tilde{\sigma}_{t_0},\tilde{\sigma}'_{t_0})$ we run the FA-1f dynamics starting from $(\sigma,\sigma')$ according to the product coupling and we take the configurations seen from the fronts at time $t_0$.
\item $\mathcal{P}^{(n)}\to\mathcal{P}^{(n+1)}$: we sample $(\tilde{\sigma}_{t_n},\tilde{\sigma}'_{t_n})$ according to $\mathcal{P}^{(n)}$:
\begin{enumerate}
\item %At the beginning of step 1+2 we check if we need to do it
If the configurations coincide on the interval $I_n=[1,d_n]$, where $d_n=2\underline{v}t_n-(\overline{v}+\underline{v})\Delta n$, then we let them evolve according to the basic coupling during a time lag $\Delta$: we obtain $(\tilde{\sigma}_{t_{n+1}},\tilde{\sigma}'_{t_{n+1}})$ by running the basic coupling started from $(\tilde{\sigma}_{t_n},\tilde{\sigma}'_{t_n})$ for a time $\Delta$, and then taking the configurations obtained as seen from the front. Using the basic coupling ensures that the equality of the configurations seen from the front is preserved w.h.p. on the interval $I_{n+1}=[0,d_{n+1}]$. By contrast, beyond distance  $d_n-(\overline{v}-\underline{v})\Delta$ from the front,  %,2\underline{v}t_{n+1}-2\overline{v}\Delta n]$ 
the information from beyond $I_n$ could have propagated, so we can not ensure equality. See Figure~\ref{fig:deperdition}.

\begin{figure}
\begin{center}
\includegraphics[scale = .5]{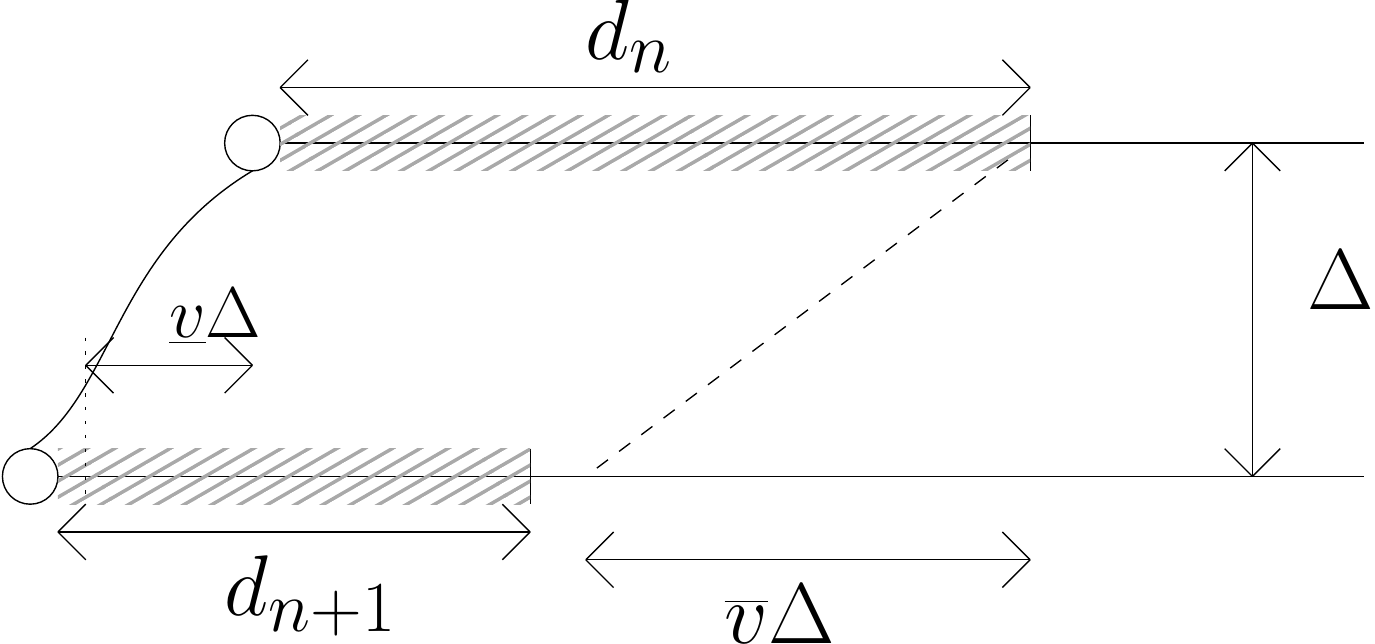}
\caption{With high probability, equality between two configurations in $LO_0$ in the dashed interval at initial time results in equality in the dashed interval after time $\Delta$. Indeed, discrepancies coming from the right travel at most at speed $\overline{v}$, and the front(s) move at least at speed $\underline{v}$.}
\label{fig:deperdition}
\end{center}
\end{figure}

\item If they do not coincide, we proceed in two steps. 
 \textbf{(Step 1)} We first choose $(\tilde{\sigma}_{s_{n+1}},\tilde{\sigma}'_{s_{n+1}})$ using the $\Lambda_n$-maximal coupling between the laws of the configurations \emph{seen from the front} after a time $\Delta_1$ starting from $\tilde{\sigma}_{t_n}$ and $\tilde{\sigma}'_{t_n}$, with $\Lambda_n:=[3\overline{v}\Delta_1,2\underline{v}s_{n+1}-(\overline{v}+\underline{v})\Delta_1]$. 
\begin{enumerate}
\item  If $\tilde{\sigma}_{s_{n+1}}$ and $\tilde{\sigma}'_{s_{n+1}}$ are not equal in the interval $\Lambda_n$, then we let them evolve for a time $\Delta_2$ via the basic coupling.
\item \textbf{(Step 2)} If instead they agree on $\Lambda_n$, then we search for the leftmost common zero of $\tilde{\sigma}_{s_{n+1}}$ and $\tilde{\sigma}'_{s_{n+1}}$ in $\Lambda_n$ and call $x^*$ its position. If there is no such zero, we define $x^*$ as the right boundary of $\Lambda_n$. Then we sample an independent Bernoulli random variable $\beta$ with $\P(\beta=1)=e^{-2\Delta_2}$. The event $\{\beta=1\}$ corresponds to the fact that the two Poisson clocks associated with $x^*$ and with the origin in the graphical construction do not ring during step 2. Using $(\tilde{\sigma}_{s_{n+1}},\tilde{\sigma}'_{s_{n+1}})$ as initial configurations, we sample two new configurations as follows.
 \begin{enumerate}
  \item If $\beta=1$ then we fix the configurations at $x^*$ as $\tilde{\sigma}_{s_{n+1}}(x^*),\tilde{\sigma}'_{s_{n+1}}(x^*)$. On $[1,x^*-1]$, we sample the two configurations using the maximal coupling for the FA-1f process run during time $\Delta_2$, with boundary conditions at $x^*$ being $\tilde{\sigma}_{s_{n+1}}(x^*)$ and $\tilde{\sigma}'_{s_{n+1}}(x^*)$ respectively, and $0$ at the origin. Finally, we sample the configurations on the right of $x^*$ and on the left of the origin using the basic coupling during time $\Delta_2$ with the same boundary conditions. Let $\xi_{n+1}$ be the (common) increment of the front during that time. See Figure~\ref{fig:couplingzoom}.
  %$\sigma(t_{n+1})_{x^*}=\sigma_{x^*}(s_{n+1})$ and 
%$\sigma'(t_{n+1})_{x^*}=\sigma'_{x^*}(s_{n+1})$. 
%The remaining part of the configurations at time $t_{n+1}$ is sampled using the basic coupling to the right of  $x^*$ and to the left of the origin; the maximal coupling for the FA process is used on the interval $[1,x^*-1]$ with boundary condition at $x^*$ equal to $\tilde{\sigma}_{s_{n+1}(x^*)$.
\item If $\beta=0$ then  we let evolve $(\tilde{\sigma}_{s_{n+1}},\tilde{\sigma}'_{s_{n+1}})$ for a time $\Delta_2$ via the basic coupling conditioned to have at least one ring either at $x^*$ or at the origin (or both).
  \end{enumerate}
Once procedure (a) or (b) is completed, we define $(\tilde{\sigma}_{t_{n+1}},\tilde{\sigma}'_{t_{n+1}})$ as the version seen from their fronts of the configurations we obtained.
\end{enumerate}
\end{enumerate}
\end{itemize}
The final coupling $\mathcal{P}^t_{\sigma,\sigma'}$ is obtained by sampling $(\tilde{\sigma}_{t_N},\tilde{\sigma}'_{t_N})$ according to $\mathcal{P}^{(N)}$, then applying the basic coupling during a time $t-t_N$ and then defining $(\tilde\sigma_t,\tilde\sigma'_t)$ as the configurations seen from the fronts at that time.

%%%%%%%%%%%%%%%%%%%%%%%%%%%%%%%%%%%%%%%%%%%%%%%%%%%%%%%%%%%%%%%%%%%%%%%%%%%%%%%%%%%%%%%%%%%%
 In the following, $\mathcal{P}$ denotes the joint distribution of all the variables used to define the coupling above. It will also be useful to introduce the following notation: for $A\subset LO_0\times LO_0$ measurable, $n=1,\ldots,N$, if $\mathcal{A}_n=\{(\tilde\sigma_{t_n},\tilde\sigma'_{t_n})\in A\}$, we write
\[
\mathcal{P}_{\mathcal{A}_n}(\cdot):=\sup_{(\eta,\eta')\in A}\mathcal{P}\left(\cdot\mid (\tilde\sigma_{t_n},\tilde\sigma'_{t_n})=(\eta,\eta')\right)
\]
and 
\[
\mathcal{P}_{\mathcal{A}_n,\beta=1}(\cdot):=\sup_{(\eta,\eta')\in A}\mathcal{P}\left(\cdot\mid (\tilde\sigma_{t_n},\tilde\sigma'_{t_n})=(\eta,\eta'),\beta=1\right).
\]
We define these quantities similarly if $\mathcal{A}_n=\{(\tilde\sigma_{s_n},\tilde\sigma'_{s_n})\in A\}$.

We can now study the probabilities of the events that we expect to see at the step $n$ of the previous procedure.

\begin{itemize}
\item At time $t_n$, we hope for enough zeros, that is, for the occurence of the event  
\begin{align*}
\mathcal{Z}_n&=\left\{\text{no 0-gap greater than }\sqrt{\Delta_1}\text{ on }[\underline{v}\Delta_1, 2\underline{v}t_n]\text{ behind the front at time }t_n\right\}\\
&:=\left\{\tilde{\sigma}_{t_n},\tilde{\sigma}'_{t_n}\in \mathcal{H}\left(\underline{v}\Delta_1,2\underline{v}t_n, \sqrt{\Delta_1}\right)\right\}.
\end{align*}
The distance $\sqrt{\Delta_1}$ is the maximal space that we allow between two zeros to relax to equilibrium at the next step; the distance $2\underline{v}t_n$ is the maximal distance where we can expect zeros without making any hypothesis on the initial configuration. By the first case of Lemma~\ref{zeros_lemma} we have 
\begin{align}
\label{firstzeros}
\mathcal{P}(\mathcal{Z}_n)%=\mathcal{P}^{(n)}(\mathcal{Z}_n)
\geq 1-2(2\underline{v}t_n)^2 e^{-c\sqrt{\Delta_1}}.
\end{align}
%%%%%%%%%%%%%%%%%%%%%%%%%%%%%%%%%%%%%%%%%%%
\item At time $s_{n+1}$, conditioning on the event $\mathcal{Z}_n$, we first can relax on the interval seen from the front $\Lambda_n=[3\overline{v}\Delta_1, 2\underline{v}s_{n+1}-(\overline{v}+\underline{v})\Delta_1]$ with error $O(e^{-c'\sqrt{\Delta_1}})$ in term of total variation by Theorem~\ref{thm:decorrelation}. %The distance $3\overline{v}\Delta_1$ is bigger that the sum of the maximal left deviation of the front, the distance without zeros at time $t_n$ and the distance which we loose where we try to relax on $\Lambda_n$. So, looking at distance $3\overline{v}\Delta_1$ we ensure that the configurations are close to equilibrium. With the same kind of computation, they are close to equilibrium on a distance at least $2\underline{v}s_{n+1}-2\overline{v}\Delta_1$ behind the front. 
Since the distributions of both configurations are close to $\mu$ on $\Lambda_n$, they are also close to each other, and denoting by $\mathcal Q_n =\{\tilde{\sigma}_{s_{n+1}}=\tilde{\sigma}_{s_{n+1}}'\text{ on }\Lambda_n\}$ and sampling the configurations according to the $\Lambda_n$-maximal coupling, we have 
\begin{align}
\label{firsteq}
\mathcal{P}_{\mathcal{Z}_n}(\mathcal{Q}_n^c)%\leq\sup_{\eta,\eta'\in\mathcal{H}\left(\underline{v}\Delta_1,2\underline{v}t_n, \sqrt{\Delta_1}\right)}\mathcal{P}\left(\mathcal Q_n^c\mid\tilde{\sigma}_{t_n}=\eta,\tilde{\sigma}'_{t_n}=\eta'\right)
% \le 2 \todo{...}
= O(e^{-c'\sqrt{\Delta_1}}).
\end{align}
Let %$x^*$ be the leftmost common zero of $\tilde{\sigma}(s_{n+1})$ and $\tilde{\sigma}'(s_{n+1})$ in $\Lambda_n$ (or the right boundary of $\Lambda_n$ if there is no such a zero and 
$\mathcal{B}_n$ the event corresponding to the fact that $x^*$ is at distance less than $\frac{\sqrt{\Delta_2}}{2}$ from the left boundary of $\Lambda_n$, that is
\[
\mathcal{B}_n:=\{x^*\leq 3\overline{v}\Delta_1+\frac{\sqrt{\Delta_2}}{2}\}.
\]
On $\mathcal{Q}_n$, $\mathcal{B}_n$ is implied by 
\[
\tilde{\mathcal{B}}_n:=\{(\tilde{\sigma}_{s_{n+1}})_{\mid [3\overline{v}\Delta_1,3\overline{v}\Delta_1+\frac{\sqrt{\Delta_2}}{2}]}\not\equiv 1\}.
\]
By Theorem~\ref{thm:decorrelation},
\begin{equation*}
\mathcal{P}_{\mathcal{Z}_n}(\tilde{\mathcal{B}}_n^c )\leq p^{\frac{\sqrt{\Delta_2}}{2}}+O(e^{-c'\sqrt{\Delta_1}}),
\end{equation*}
and therefore %Conditionning on to equilibrium we have 
\begin{align}
\label{rightmostzero}
\mathcal{P}_{\mathcal{Z}_n}\left(\mathcal{B}_n^{c}\cap\mathcal{Q}_n\right)\leq p^{\frac{\sqrt{\Delta_2}}{2}}+O(e^{-c'\sqrt{\Delta_1}}),
\end{align}
%with definitions of $\mathcal{P}_{\mathcal{Z}_n}(\tilde{\mathcal{B}}_n^c)$ and $\mathcal{P}_{\mathcal{Z}_n\cap\mathcal{Q}_n}\left(\mathcal{B}_n^{c}\right)$ similar to that in \eqref{firsteq}.
Then, we also use the zeros at time $t_n$ to generate zeros between the front and $\Lambda_n$ at time $s_{n+1}$. Actually the event $\mathcal{Z}_n$ implies that $\tilde{\sigma}_{t_n},\tilde{\sigma}'_{t_n}\in \mathcal{H}(0,3\overline{v}\Delta_1,2\underline{v}\Delta_1)$ because $\underline{v}\Delta_1+\sqrt{\Delta_1} \leq 2\underline{v}\Delta_1$. We consider the event 
\begin{align*}
\mathcal{Z}_n'&=\left\{\text{no $0$-gap greater than }\frac{\sqrt{\Delta_2}}{2}\text{ on }\left[\frac{\sqrt{\Delta_2}}{2}, 3\overline{v}\Delta_1\right]\text{ behind the front at time }s_{n+1}\right\}\\
&:=\left\{\tilde{\sigma}_{s_{n+1}},\tilde{\sigma}'_{s_{n+1}}\in \mathcal{H}\left(\frac{\sqrt{\Delta_2}}{2}, 3\overline{v}\Delta_1, \frac{\sqrt{\Delta_2}}{2}\right)\right\}.
\end{align*}
Applying the second case of Lemma~\ref{zeros_lemma} we have that:
\begin{align}
\label{secondzeros}
\mathcal{P}_{\mathcal{Z}_n}({\mathcal{Z}'_n}^c )\leq \frac{4\Delta_1^2}{\sqrt{\Delta_2}} e^{-c\frac{\sqrt{\Delta_2}}{2}}+6\overline{v}\Delta_1^2 e^{-c(\Delta_1\wedge \frac{\sqrt{\Delta_2}}{2})} =O(e^{-c'\sqrt{\Delta_2}}).
\end{align}
%%%%%%%%%%%%%%%%%%%%%%%%%%%%%%%%%%%%
\item  At time $t_{n+1}$ conditioning on the events $\mathcal{Z}_n'$, $\mathcal{Q}_n$, $\mathcal{B}_n$ and on the fixation of the zeros on $X(s_{n+1})$ and $x^*$ (i.e.\ $\beta=1$), we can relax to equilibrium on $[X(s_{n+1})+1, X(s_{n+1})+x^*-1]$, that is, on the interval $[1-\xi_{n+1}, x^*-1-\xi_{n+1}]$ seen from the front. So %, introducing the Bernoulli random variable $\beta$ corresponding to the fact that there is no clock rings at the front and at $x^*$ between time $s_{n+1}$, time $t_{n+1}$ and 
we have for all $y\in\Lambda_n$ at distance less than $\frac{\sqrt{\Delta_2}}{2}$ from the left boundary of $\Lambda_n$: 
\begin{align}
\label{secondeq}
\mathcal P_{\mathcal{Z}'_n,\mathcal{Q}_n,\mathcal{B}_n, x^*=y,\beta=1}\left(\tilde{\sigma}_{t_{n+1}}\neq\tilde{\sigma}_{t_{n+1}}'\text{ on }[1-\xi_{n+1}, x^*-1-\xi_{n+1}]\mid \right)
= O(e^{-c'\sqrt{\Delta_2}}),
\end{align}
applying~\eqref{eq:relaxation_finitevolume}. Furthermore, if the configurations coincide on $\Lambda_n$ at time $s_{n+1}$, then we can control the probability that they coincide on $\Lambda_n-\xi_{n+1}$ minus a sub interval of size $\overline{v}\Delta_2$ at time $t_{n+1}$ by the probability of the event $F(d_n,d_n-\overline{v}\Delta_2,\Delta_2)$ corresponding to the propagation of the information located beyond $\Lambda_n$. Indeed, by Lemma~\ref{finitespeedpropagation} and Corollary~\ref{atleastlinearFA}, we have (recall Figure~\ref{fig:deperdition}),
\begin{multline}
\label{degradation}
\mathcal P_{{\mathcal{Q}_n},{\beta=1}}\left(\tilde{\sigma}_{t_{n+1}}\neq\tilde{\sigma}_{t_{n+1}}'\text{ on }[ x^*+1-\xi_{n+1}, d_{n+1}]\right)
\leq \P\left(F(d_n,d_n-\overline{v}\Delta_2,\Delta_2)\right)\\
+\mathcal{P}(\xi_{n+1}\geq-\underline{v}\Delta_2)=O(e^{-c'{\Delta_2}}),
\end{multline}
where the first inequality comes from the fact that, if $\xi_n\leq-\underline{v}\Delta_2$, $d_n-\overline{v}\Delta_2-\xi_{n+1}\geq d_{n+1}$.
To conclude, notice that by construction
\begin{equation}\label{eq:trivial}
\mathcal P\left(\tilde{\sigma}_{t_{n+1}}\neq\tilde{\sigma}_{t_{n+1}}'\text{ on }[1, -\xi_{n+1}]\mid{\beta=1}\right)=0.
 \end{equation}
 Combining \eqref{secondeq}, \eqref{degradation} and \eqref{eq:trivial}, we get
 \begin{equation}\label{nosuccess}
\mathcal P_{\mathcal{Z}'_n,\mathcal{Q}_n,\mathcal{B}_n,\beta=1}\left(\tilde{\sigma}_{t_{n+1}}\neq\tilde{\sigma}_{t_{n+1}}'\text{ on }[1, d_{n+1}]\right)=O(e^{-c'\sqrt{\Delta_2}}).
\end{equation}
\end{itemize}

%%%%%%%%%%%%%%%%%%%%%%%%%%%%%%%%%%%

Let $\mathcal{M}_n$ be the event of matching in $I_n=[1,d_n]$ at time $t_n$:
\[
\mathcal M_n^c=\left\{\tilde{\sigma}_{t_n}\neq \tilde{\sigma}'_{t_n} \text{ on }I_n\right\},
\]
and denote by $p_n$ the probability that the coupling $\mathcal{P}^{(n)}$ is not successful, that is, $p_n=\mathcal{P}(\mathcal M_n^c)$. With the previous estimates in mind, we prove the following recursive relation: 
\begin{align}
\label{eq_rec}
p_{n+1}\leq Ce^{-c\Delta_1}+p_n(1- \frac{1}{2}e^{-2\Delta_2}).
\end{align}
Indeed, using the previously introduced event $\mathcal{Z}_n$, we have that
\begin{align}
\mathcal P\left(\mathcal{M}_{n+1}^c \right)
&\leq \mathcal P\left(\mathcal{M}_{n+1}^c \cap \mathcal{M}_{n}^c  \cap \mathcal{Z}_{n} \right)+\mathcal P\left(\mathcal{Z}_{n}^c\right) +\mathcal{P}(\mathcal{M}_{n+1}^c \cap \mathcal{M}_{n})\nonumber
\\
&\leq  \mathcal P\left(\mathcal{M}_{n+1}^c \,\big|\, \mathcal{M}_{n}^c  \cap \mathcal{Z}_{n} \right)\mathcal{P}(\mathcal{M}_{n}^c ) + \mathcal P\left(\mathcal{Z}_{n}^c\right) \nonumber
\\
&\quad+\P\left(F(d_n,d_n-\overline{v}\Delta,\Delta)\right)+\sup_{\omega\in LO_0}\P_{\omega}(X(\omega_\Delta)>-\underline{v}\Delta).\label{eqrecstep1}
\end{align}
The last two terms come from a reasoning similar to what we did in \eqref{degradation}. %correspond to the fact that some discrepancies have moved from outside of $I_n$ at time $t_n$ to inside of $I_{n+1}$ (seen from the front) at time $t_{n+1}$, destructing the matching. %Since $|d_{n+1}-d_n|\ge \overline{v}(t_{n+1}-t_n)$, b
By Lemma~\ref{finitespeedpropagation} and Corollary~\ref{atleastlinearFA}, their contribution is %we have $\mathcal{P}\left(F(d_n,d_{n+1},\Delta)\right)=
$O(e^{-c\Delta})$. The quantity $\mathcal P\left(\mathcal{Z}_{n}^c\right)$ has been controlled in~\eqref{firstzeros} so we now have to study 
 $\mathcal P\left(\mathcal{M}_{n+1}^c \,\big|\, \mathcal{M}_{n}^c  \cap \mathcal{Z}_{n} \right)$. %\le 
Now we condition by the favorable events at time $s_{n+1}$:
%\begin{multline*}
%\mathcal P\left(\mathcal{M}_{n+1}^c\mid \mathcal Z_n\cap\mathcal M_n^c \right)\leq \mathcal P_{\mathcal Z_n\cap\mathcal M_n^c}\left(\mathcal{M}_{n+1}^c \,\big|\, \mathcal{Z}'_{n}\cap\mathcal{Q}_n\cap\mathcal{B}_n \right)\\
%+\mathcal P_{\mathcal Z_n}\left(\mathcal{Z}_{n}'^c\right) +\mathcal P_{\mathcal Z_n}\left(\mathcal{Q}_{n}^c\right)+\mathcal P_{\mathcal Z_n\cap\mathcal Q_n}\left(\mathcal{B}_{n}^c \right).
%\end{multline*}
\begin{multline*}
\mathcal P\left(\mathcal{M}_{n+1}^c\mid \mathcal Z_n\cap\mathcal M_n^c \right)
\leq \mathcal P_{\mathcal Z_n\cap\mathcal M_n^c}\left(\mathcal{M}_{n+1}^c \,\big|\, \mathcal{Z}'_{n}\cap\mathcal{Q}_n\cap\mathcal{B}_n \right)
\\
+\mathcal P_{\mathcal{Z}_n}\left(\mathcal{Z}_{n}'^c \right) +\mathcal P_{\mathcal{Z}_n}\left(\mathcal{Q}_{n}^c \right)+\mathcal P_{\mathcal{Z}_n}\left(\mathcal{B}_{n}^c \cap\mathcal Q_n\right).
\end{multline*}
The last terms have been controlled by~\eqref{firsteq},~\eqref{rightmostzero} and~\eqref{secondzeros}.

Finally, {using~\eqref{nosuccess}}
\begin{align*}
\mathcal P&\left(\mathcal{M}_{n+1}^c \,\big|\, \mathcal{Z}'_{n}\cap\mathcal{Q}_n\cap\mathcal{B}_n \cap {\mathcal Z_n\cap\mathcal M_n^c}\right)\\
&\leq 1- \P(\beta=1)\left(1-\mathcal P\left(\mathcal M^c_{n+1} \,\big|\,\{\beta=1\}\cap\mathcal{Z}'_{n}\cap\mathcal{Q}_n
\cap\mathcal{B}_n\cap {\mathcal Z_n\cap\mathcal M_n^c}\right)\right)\\
&\leq 1-e^{-2\Delta_2}\frac{1}{2},
\end{align*}
for $t$ large enough. %and~\eqref{degradation} to control the term $\mathcal P_{\mathcal{M}_{n}^c  \cap \mathcal{Z}_{n}}(\mathcal M^c_{n+1}|\beta=1,\mathcal{Z}'_{n},\mathcal{B}_{n})$.
Combining with~\eqref{eqrecstep1} we obtain the desired recursivity~\eqref{eq_rec}. 
%\[p_{n+1}\leq Ce^{-c\Delta_1}+p_n(1- \frac{1}{2}e^{-2\Delta_2}).\]
So, we get 
\begin{align*}
p_N&\leq \left(1- \frac{1}{2}e^{-2\Delta_2}\right)^{N}+ Ce^{-c\Delta_1}e^{\Delta_2} %\\
%&=O(e^{-Ne^{-2\Delta_2}})+O(e^{-c'\Delta_1})\\
%&=(e^{-c'\Delta_1})=O(e^{-ce^{(\log t)^{1/4}}})
\end{align*}
%%%%%%%%%%%%%%%%%%%%%%%%%%%%%%%
and we conclude that $p_N=O(e^{-ce^{(\log t)^{1/4}}})$, replacing $\Delta_1,\Delta_2,N$ by their chosen values.

To conclude the proof of~\eqref{thmconfig}, we now compute the distance on which the matching occurs w.h.p. At time $t_N$ the coupling was successful on $I_N$ with probability $1-p_N$. Note that $d_N\geq (2\underline{v}-\epsilon(\overline{v}+\underline{v}))t-2\underline{v}\Delta$. Let $\epsilon=\frac{\underline{v}}{2(\overline{v}+\underline{v})}$ and $d^*=\underline{v}$. Then $d^*t\leq d_N$ and if we let the configurations evolve according to the basic coupling between time $t_N$ and time $t$ we have 
\begin{multline*}
\mathcal{P}\left(\tilde{\sigma}_{t}\neq \tilde{\sigma}'_{t} \text{ on }[0,d^*t] \right)
\leq p_N + \P\left(F(d_N,{d^*t},t-t_N)\right)+\sup_{\omega\in LO_0}\P\left(X(\omega_{t-t_N})\geq 0\right).
\end{multline*}
Since $d_N-d^*t\geq \underline{v}t/2-2\underline{v}\Delta\geq\overline{v}\Delta\geq \overline{v}(t-t_N)$, we have that $\P(F(d_N,d^*t,t-t_N))\leq e^{- {c}t}$ and we obtained the announced convergence.

The fact that the set of probability measures on $\Omega$ is compact gives us the existence of invariant measures. The uniqueness and convergence starting from any $\pi$ measure on $LO_0$ comes from~\eqref{thmconfig} and we conclude the proof of~Theorem~\ref{thm:coupling}.

\end{proof}

\section{Limit Theorems}

\subsection{Moments and covariances}\label{section:hypotheses}

Thanks to Theorem~\ref{thm:coupling} we can study the increments of the front. We consider $\sigma\in LO_0$. For $n\in\N$, we introduce the following increments
\begin{align*}
\xi_n=X(\sigma_{n})-X(\sigma_{{n-1}}),
\end{align*}
so that, for $t>0$, \[X(\sigma_t)=\sum_{n=1}^{\lfloor t\rfloor} \xi_{n}+ X(\sigma_t)-X(\sigma_{\lfloor t\rfloor}).\]
In order to prove a law of large numbers and a central limit theorem, we want to control the moments (Lemma~\ref{moment2xi}) and the covariances (Lemma~\ref{covxinu}) of these front increments. Lemma~\ref{momentxinu} proves the convergence of $\E_{\sigma}[\xi_n]$ to $\E_{\nu}[\xi_1]$, which leads to the fact that the covariance between $\xi_j$ and $\xi_n$ decays fast enough.

\begin{lemma}\label{moment2xi} For $f:\N\to\R$ such that $e^{-|x|}f(x)^2\in L^1$ we have 
\begin{align}\label{eq:moment2xi}
\sup_{\sigma\in LO_0}\E_{\sigma}\left[f(\xi_1)^2\right]=c(f)<\infty.
\end{align}
\end{lemma}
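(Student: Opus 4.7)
The plan is to reduce the lemma to the exponential tail bound on $\xi_1$ provided by finite speed of propagation. Since $\sigma \in LO_0$ implies $X(\sigma_0) = 0$, we have $\xi_1 = X(\sigma_1)$, and the at-most-linear growth estimate \eqref{atmostlinearfa} (applied at time $t = 1$) gives
\[
\P_\sigma(|\xi_1| \geq c) \leq e^{-c} \qquad \text{for every } c \geq \overline{v},
\]
uniformly in $\sigma \in LO_0$. This is the sole probabilistic input.

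From there, I would write
\[
\E_\sigma[f(\xi_1)^2] = \sum_{k} f(k)^2\, \P_\sigma(\xi_1 = k)
\]
and split according to whether $|k| \leq \overline{v}$ or $|k| > \overline{v}$. For the tail piece, the exponential estimate yields $\P_\sigma(\xi_1 = k) \leq e^{-|k|}$, so that part is bounded by $\|e^{-|\cdot|}f^2\|_{L^1}$. For the finite piece, I would use that the $L^1$-hypothesis $\sum_j e^{-|j|} f(j)^2 < \infty$ implies the pointwise bound $f(k)^2 \leq e^{|k|}\,\|e^{-|\cdot|}f^2\|_{L^1}$, so the finitely many terms $\{|k|\leq \overline{v}\}$ contribute at most $(2\lfloor\overline{v}\rfloor+1)\,e^{\overline{v}}\,\|e^{-|\cdot|}f^2\|_{L^1}$. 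Both bounds are independent of $\sigma$, giving the uniform estimate claimed.

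There is essentially no obstacle here: the exponential weight $e^{-|x|}$ in the hypothesis is tuned exactly to match the rate appearing in Lemma~\ref{finitespeedpropagation}, and the argument is a one-line Markov-type tail bound. The only mild care is to notice that the $L^1$-condition is strong enough to keep the finitely many small-$|k|$ terms under control uniformly (which it is, since integrability implies pointwise finiteness of $f^2$).
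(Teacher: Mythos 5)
Your proof is correct and follows essentially the same route as the paper: both arguments rest on the finite-speed-of-propagation estimate giving $\P_\sigma(\xi_1=x)\leq e^{-|x|}$ for $|x|\geq\overline{v}$, followed by splitting the sum into the finitely many terms with $|x|\leq\overline{v}$ and the exponentially damped tail, with bounds uniform in $\sigma$. The only cosmetic difference is that you deduce the tail bound from \eqref{atmostlinearfa} at $t=1$ rather than directly from Lemma~\ref{finitespeedpropagation}, which is the same ingredient.
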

\begin{remark} Consequently, we have the same result for $\E_{\sigma}[f(\xi_n)^2]$, for the expectation under $\nu$ and for the covariances between increments.\end{remark}

\begin{proof}The result follows from the finite speed of propagation. Indeed, by Lemma~\ref{finitespeedpropagation} we have that, for $\sigma\in LO_0$ and $|x|\geq \overline{v}$
\[
\P_{\sigma}(\xi_1=x)\leq \P(F(0,x,1))\leq e^{-|x|}.
\]
Then, for every $\sigma\in LO_0$, we have
\begin{align*}
\E_{\sigma}\left[f(\xi_1)^2\right]&\leq \max_{|x|\leq \overline{v}}f(x)^2+\sum_{|x|>\overline{v}} f(x)^2\P_{\sigma}(\xi_1=x)\\
&\leq c(f)<\infty.
\end{align*}
The right bound does not depend on $\sigma$, so we obtain the announced result.
\end{proof}

\begin{lemma}\label{momentxinu} There exists $\gamma>0$ such that for $f:\N\to\R$ with $e^{-|x|}f(x)^2\in L^1$,
%l'inegalite: \[|\E_{\sigma}(f(\xi_n))-\E_{\nu}(\xi_1)|\leq  \sqrt{c(f)}\left[ 4e^{-\frac{d^*t_{n-1}}{2}}+O(err(t_{n-1}))\right] \leq  \sqrt{c(f)}\left[ 4e^{-\frac{d^*t_{n-1}}{2}}+e^{-ce^{(\log t_{n-1})^{1/4}}}\right]\]
\[
\sup_{\sigma\in LO_0}\big|\E_{\sigma}\left[f(\xi_n)\right]-\E_{\nu}\left[f(\xi_1)\right]\big|
\leq C(f)e^{-\gamma e^{(\log n)^{1/4}}}.
\]
\end{lemma}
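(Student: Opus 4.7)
The plan is to reduce the statement to Theorem~\ref{thm:coupling} applied to a suitable local bounded observable on $LO_0$. Setting $g(\eta):=\E_\eta[f(\xi_1)]$, the Markov property gives $\E_\sigma[f(\xi_n)]=\E_\sigma[g(\tilde\sigma_{n-1})]$ and $\E_\nu[f(\xi_1)]=\E_\nu[g]$, so the problem reduces to bounding $|\E_\sigma[g(\tilde\sigma_{n-1})]-\E_\nu[g]|$. Since $g$ is neither bounded nor local, two preparatory approximation steps will be needed before Theorem~\ref{thm:coupling} can be applied.

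First I would truncate $f$ to $f_M:=f\mathbf{1}_{[-M,M]}$, controlling the error by the finite-speed-of-propagation tail $\P_\eta(\xi_1=x)\leq e^{-|x|}$ used in the proof of Lemma~\ref{moment2xi}: a Cauchy--Schwarz exactly as there makes the truncation error $O(C(f)e^{-M/2})$, uniformly over $\eta\in LO_0$ and under $\nu$. The hypothesis $e^{-|x|}f(x)^2\in L^1$ also yields the pointwise bound $|f(x)|\leq C(f)e^{|x|/2}$, so that $g_M(\eta):=\E_\eta[f_M(\xi_1)]$ satisfies $\|g_M\|_\infty\leq C(f)e^{M/2}$.

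Next I would localise $g_M$ using the basic coupling between two FA-1f processes started from $\eta$ and from $\eta^K$, where $\eta^K$ agrees with $\eta$ on $[-K,K]$ and equals $1$ elsewhere. By Lemma~\ref{finitespeedpropagation} applied at both endpoints of $[-K,K]$, on an event of probability $1-O(e^{-cK})$ the two processes coincide on a neighbourhood of the origin throughout $[0,1]$; in particular $\xi_1(\eta,\mathcal{C})=\xi_1(\eta^K,\mathcal{C})$. This produces $g_M^K(\eta):=g_M(\eta^K)$ with $|g_M(\eta)-g_M^K(\eta)|\leq C(f)e^{M/2-cK}$. Crucially, because every $\eta\in LO_0$ has $\eta_{[-K,-1]}\equiv 1$, the restriction of $g_M^K$ to $LO_0$ depends only on $\eta_{[0,K]}$---exactly the framework in which Theorem~\ref{thm:coupling} bounds the total variation distance.

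Finally, for $K\leq d^*(n-1)$, Theorem~\ref{thm:coupling} together with the observation that both $\tilde\mu^\sigma_{n-1}$ and $\nu$ are supported on $LO_0$ yields
\[
|\E_\sigma[g_M^K(\tilde\sigma_{n-1})]-\E_\nu[g_M^K]|\leq 2\|g_M^K\|_\infty\,\|\tilde\mu^\sigma_{n-1}-\nu\|_{[0,K]}\leq C(f)\,e^{M/2-ce^{(\log(n-1))^{1/4}}}.
\]
Combining the three error terms gives $|\E_\sigma[f(\xi_n)]-\E_\nu[f(\xi_1)]|\leq C(f)(e^{-M/2}+e^{M/2-cK}+e^{M/2-ce^{(\log(n-1))^{1/4}}})$, and the choice $M=K\sim e^{(\log n)^{1/4}}$ makes all three terms $O(e^{-\gamma e^{(\log n)^{1/4}}})$ for some $\gamma>0$, with small $n$ absorbed in the constant. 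The main obstacle is the tension between needing $M$ large (so $\xi_n\in[-M,M]$ with negligible error) and the price $\|f_M\|_\infty\leq C(f)e^{M/2}$ this forces into both the localisation and the coupling step; it is precisely the sub-doubly-exponential rate in Theorem~\ref{thm:coupling} that leaves enough room to balance these terms.
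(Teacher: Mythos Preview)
Your proposal is correct, but the paper's route is shorter because it avoids the truncation of $f$ altogether. The observation you are missing is that the function $g(\eta)=\E_\eta[f(\xi_1)]$ is \emph{already} uniformly bounded: by Cauchy--Schwarz and Lemma~\ref{moment2xi}, $|g(\eta)|\leq \sqrt{\E_\eta[f(\xi_1)^2]}\leq\sqrt{c(f)}$. Thus there is no need for the intermediate $f_M$ and no factor $e^{M/2}$ to carry around. The paper then localises $g$ to $[0,d^*(n-1)]$ by writing, under the basic coupling between $\sigma'$ and its truncation $\Phi_{n-1}(\sigma')$,
\[
\E_{\sigma'}[f(\xi_1)]-\E_{\Phi_{n-1}(\sigma')}[f(\xi_1)]=\mathcal E\bigl[(f(X)-f(\tilde X))\mathbf 1_{X\neq\tilde X}\bigr]
\]
and applying Cauchy--Schwarz to bound this by $2\sqrt{c(f)}\,\P(\tilde F(0,d^*(n-1),1))^{1/2}$. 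After this, Theorem~\ref{thm:coupling} is applied with the bounded local observable $\sigma'\mapsto\E_{\Phi_{n-1}(\sigma')}[f(\xi_1)]$, and no parameter balancing is required.

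Your approach does go through, with one small caveat: when you set $M\sim e^{(\log n)^{1/4}}$, the coupling term $e^{M/2-c\,e^{(\log(n-1))^{1/4}}}$ is small only if $M/2<c\,e^{(\log(n-1))^{1/4}}$, and you do not know that the constant $c$ from Theorem~\ref{thm:coupling} exceeds $1/2$. You should therefore take $M=\alpha\,e^{(\log n)^{1/4}}$ with $\alpha<2c$ (and $K$ can be anything between $M$ and $d^*(n-1)$). With that adjustment your three-term decomposition is sound; it simply costs you an extra step and a parameter optimisation that the paper's second-moment trick makes unnecessary.
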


\begin{proof} 
%$c'(f)= \sup_{\sigma}\sqrt{\E_{\sigma}(f(\xi_1)^2)}$
We use the Markov property at time $n-1$ to write
\[
\E_{\sigma}\left[f(\xi_n)\right]=\int d\mu^{\sigma}_{n-1}(\sigma')\E_{\sigma'}\left[f(\xi_1)\right],
\]
where $\mu^\sigma_t$ is the distribution of the configuration at time $t$ starting from $\sigma$.
Let $\Phi_t(\sigma')$ be the configuration equal to $\theta_{X(\sigma')}\sigma'$ on $[0,d^*t]$ and equal to $1$ elsewhere (we recall that $d^*$ is a quantity defined in Theorem~\ref{thm:coupling}). Under the basic coupling (denoted by $\mathcal{E}$ in the following computations), the front of the configuration after a time $\delta$ starting with $\sigma'$ is different from the one starting with $\Phi_t(\sigma')$ only if the event $\tilde{F}(0,d^*t,\delta)$ occurs. So, denoting by $X$ and $\tilde{X}$ the first increments starting respectively from configurations $\sigma'$ and $\Phi_{n-1}(\sigma')$, we have that

\begin{align*}
\E_{\sigma'}\left[f(\xi_1)\right]-\E_{\Phi_{n-1}(\sigma')}\left[f(\xi_1)\right] &=\mathcal{E}\left[\left(f(X)-f(\tilde{X})\right)\1_{X\neq \tilde{X}}\right]
\\
&\leq \sqrt{\mathcal{E}\left[\left(f(X)-f(\tilde{X})\right)^2\right]} \sqrt{\mathcal{E}\left[\1_{X\neq \tilde{X}}\right]}
\\
%&\leq 2 \sqrt{\E_{\sigma'}\left[f(\xi_1)^2\right]} \sqrt{\P_{\sigma'}\left(\xi_1(\sigma')\neq \xi_1(\Phi_{t_{n-1}}(\sigma'))\right)}
%\\
&\leq 2\sqrt{\sup_{\sigma\in LO_0}\E_{\sigma}\left[f(\xi_1)^2\right]} \sqrt{\P(\tilde{F}(0,d^*(n-1),\delta))}.
\end{align*}
Therefore, if $n-1\geq\frac{\overline{v}}{d^*}$, %then $d^*t_{n-1}\geq \overline{v}\delta$ and
\[
\sup_{\sigma\in LO_0} \int d\mu^{\sigma}_{n-1}(\sigma')\left(\E_{\sigma'}\left[f(\xi_1)\right]-\E_{\Phi_{n-1}(\sigma')}\left[f(\xi_1)\right]\right)\leq 2e^{-\frac{d^*(n-1)}{2}}\sqrt{\sup_{\sigma\in LO_0} \E_{\sigma}\left[f(\xi_1)^2\right]}.
\]
%In the same way, we introduce the measure $\Phi_t(\nu)$ equal to $\nu$ on $[0,d^*t]$ and equal to $\delta_1$ elsewhere. 
Similarly (taking the expectation w.r.t.\ $\nu$), we have that 
\[
\E_{\nu}\left[f(\xi_1)\right]-\int d\nu(\sigma')\E_{\Phi_{n-1}(\sigma')}\left[f(\xi_1)\right]
\leq 2e^{-\frac{d^*(n-1)}{2}}\sqrt{\sup_{\sigma\in LO_0} \E_{\sigma}\left[f(\xi_1)^2\right]}.
\]
Besides, we can write that
\begin{multline*}
\big|\int d\mu_{n-1}^{\sigma}(\sigma')\E_{\Phi_{n-1}(\sigma')}\left[f(\xi_1)\right]-\int d\nu(\sigma')\E_{\Phi_{n-1}(\sigma')}\left[f(\xi_1)\right]\big|
\\
\leq 2\sqrt{\sup_{\sigma\in LO_0}\E_{\sigma}\left[f(\xi_1)^2\right]}\sup_{\sigma\in LO_0}\left\|\mu^{\sigma}_{n-1}-\nu\right\|^{1/2}_{\left[0,d^*(n-1)\right]}.
\end{multline*}
Finally, putting everything together
\begin{align*}
\sup_{\sigma\in LO_0}\big|\E_{\sigma}\left[f(\xi_n)\right]-\E_{\nu}\left[f(\xi_1)\right]\big|
\leq  2\sqrt{c(f)}\left( 4e^{-\frac{d^*(n-1)}{2}}+\sup_{\sigma\in LO_0} \left\|\mu^{\sigma}_{n-1}-\nu\right\|^{1/2}_{\left[0,d^*(n-1)\right]}\right).
\end{align*}
Applying Theorem~\ref{thm:coupling} we obtain the desired control.
\end{proof}

\begin{lemma}\label{covxinu} There exists $\gamma>0$ such that, for $f:\N\to\R$, $e^{-|x|}f(x)^2\in L^1$ and $j<n$ two positive integers,
\begin{enumerate}
\item $\displaystyle \sup_{\sigma\in LO_0}\big|\Cov_{\sigma}\left[f(\xi_j),f(\xi_n)\right]\big|\leq C(f)e^{-\gamma e^{(\log (n-j))^{1/4}}}$,

and the same holds for the covariance under $\nu$;
\item  for $j\geq \frac{\overline{v}}{d^*}(n-j)$, 
\[
\displaystyle \sup_{\sigma\in LO_0}\big|\Cov_{\sigma}\left[f(\xi_j),f(\xi_n)\right]-\Cov_{\nu}\left[f(\xi_1),f(\xi_{n-j+1})\right]\big|\leq C(f)e^{-\gamma e^{(\log j)^{1/4}}}.
\]
\end{enumerate}
\end{lemma}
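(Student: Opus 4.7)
The plan is to treat the two parts separately by conditioning on an intermediate time via the Markov property, invoking Lemma~\ref{momentxinu} for part~1 and Theorem~\ref{thm:coupling} combined with finite speed of propagation for part~2.

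For part~1, I condition at time $j$. By translation invariance of the FA-1f dynamics and the Markov property, the conditional distribution of $\xi_n$ given $\mathcal{F}_j$ is that of $\xi_{n-j}$ started from $\tilde{\sigma}_j:=\theta_{X(\sigma_j)}\sigma_j\in LO_0$. Setting $h(\eta):=\E_\eta[f(\xi_{n-j})]$, Lemma~\ref{momentxinu} applied with $n-j$ in place of $n$ gives the uniform estimate $|h(\eta)-\E_\nu[f(\xi_1)]|\leq C(f)\,e^{-\gamma e^{(\log(n-j))^{1/4}}}$ for every $\eta\in LO_0$. Since shifting $h$ by a constant does not change the covariance,
\[
\Cov_\sigma\bigl(f(\xi_j),f(\xi_n)\bigr)=\Cov_\sigma\bigl(f(\xi_j),\,h(\tilde{\sigma}_j)-\E_\nu[f(\xi_1)]\bigr),
\]
and Cauchy--Schwarz combined with Lemma~\ref{moment2xi} yields the announced bound. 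The same computation under $\P_\nu$ handles the covariance under $\nu$.

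For part~2, I use two ingredients. First, invariance of $\nu$ for the process seen from the front (Theorem~\ref{thm:coupling}) together with translation invariance of the dynamics implies that the sequence $(\xi_k)_{k\geq 1}$ is stationary under $\P_\nu$, so $\Cov_\nu(f(\xi_1),f(\xi_{n-j+1}))=\Cov_\nu(f(\xi_j),f(\xi_n))$. Second, conditioning at time $j-1$ and using translation invariance, each of $\E_\pi[f(\xi_j)]$, $\E_\pi[f(\xi_n)]$ and $\E_\pi[f(\xi_j)f(\xi_n)]$ (for $\pi=\P_\sigma$ or $\P_\nu$) can be written as $\E_\pi[\Phi(\tilde{\sigma}_{j-1})]$ with $\Phi$ equal to $H_1(\eta):=\E_\eta[f(\xi_1)]$, $H_2(\eta):=\E_\eta[f(\xi_{n-j+1})]$ or $G(\eta):=\E_\eta[f(\xi_1)f(\xi_{n-j+1})]$, respectively. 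Invariance of $\nu$ gives $\E_\nu[\Phi(\tilde{\sigma}_{j-1})]=\int\Phi\,d\nu$, so it suffices to control $|\E_\sigma[\Phi(\tilde{\sigma}_{j-1})]-\int\Phi\,d\nu|$ uniformly for $\Phi\in\{H_1,H_2,G\}$; the product piece $\E_\sigma[f(\xi_j)]\E_\sigma[f(\xi_n)]-\E_\nu[f(\xi_j)]\E_\nu[f(\xi_n)]$ of the covariance is then handled by telescoping, using the uniform bound $\|\Phi\|_\infty\leq c(f)$ that follows from Cauchy--Schwarz and Lemma~\ref{moment2xi}.

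The decisive step is to combine finite speed of propagation with Theorem~\ref{thm:coupling}. Using Lemma~\ref{finitespeedpropagation} I couple two FA-1f trajectories starting from $\eta$ and from an arbitrary modification of $\eta$ outside $[-L,L]$: for $L\geq C\overline{v}(n-j+1)$, the relevant values of $\xi_1$ and $\xi_{n-j+1}$ agree except on an event of probability $O(e^{-cL})$, so each of $H_1,H_2,G$ is within $O(e^{-cL/2})$ in sup norm of a bounded function supported in $[0,L]$ (the unboundedness of $f$ is absorbed via Cauchy--Schwarz and the uniform second moment provided by Lemma~\ref{moment2xi}). The hypothesis $j\geq(\overline{v}/d^*)(n-j)$ ensures that $L$ can be chosen with $L\leq d^*(j-1)$, so that Theorem~\ref{thm:coupling} applied at time $j-1$ bounds $\|\tilde{\mu}_{j-1}^\sigma-\nu\|_{[0,L]}$ by $e^{-c'e^{(\log(j-1))^{1/4}}}$; multiplying by $\|\Phi\|_\infty\leq c(f)$ and adding the localization error yields the announced decay. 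The main technical obstacle is exactly this localization step, because $f$ is only controlled in $L^2$ and one must tame $f(\xi_k)$ on the rare event where the finite-speed coupling fails, which is precisely where Lemma~\ref{moment2xi} is essential.
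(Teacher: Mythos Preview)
Your proposal is correct and follows essentially the same route as the paper. For part~1 both arguments condition at time $j$, subtract the constant $\E_\nu[f(\xi_1)]$, and apply Cauchy--Schwarz together with Lemmas~\ref{moment2xi} and~\ref{momentxinu}; for part~2 both condition at time $j-1$, localize via finite speed of propagation under the hypothesis $d^*(j-1)\geq\overline{v}(n-j+1)$, and invoke Theorem~\ref{thm:coupling}---the only cosmetic difference is that you phrase the $\nu$ side through stationarity of $(\xi_k)_k$ under $\P_\nu$, while the paper keeps the indices $1,n-j+1$ and mimics the truncation trick of Lemma~\ref{momentxinu}.
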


\begin{proof}
Let $j<n$ and $\sigma\in LO_0$. 
\begin{enumerate}
\item The idea is the following one: if the difference between $j$ and $n$ is large enough then $\xi_j$ and $\xi_n$ are almost independent. Let $(\mathcal{F}_{j})_{j\in\N}$ be the filtration associated with the random variables $(\xi_j)_{j\in\N}$. Using the Markov property and Cauchy-Schwarz inequality we have that 
\begin{align*}
\big|\Cov_{\sigma}\left[f(\xi_j),f(\xi_n)\right]\big|&=\big|\Cov_{\sigma}\left[f(\xi_j),\E_{\sigma}\left[f(\xi_n)|\mathcal{F}_{j}\right]\right]\big|
\\
&=\big|\Cov_{\sigma}\left[f(\xi_j),\E_{\sigma_{j}}\left[f(\xi_{n-j})\right]-\E_{\nu}\left[\xi_1\right] \right]\big|
\\
&\leq 2\sqrt{\E_{\sigma}\left[f(\xi_j)^2\right]}\sqrt{\E_{\sigma}\left[\left(
\E_{\sigma_{j}}\left[f(\xi_{n-j})\right]-\E_{\nu}\left[\xi_1\right] \right)^2\right]}.
\end{align*}
Then, we apply Lemma~\ref{momentxinu} and we obtain the first point: 
\[\sup_{\sigma\in LO_0}\big|\Cov_{\sigma}\left[f(\xi_j),f(\xi_n)\right]\big| = O\left(e^{-\gamma e^{(\log (n-j))^{1/4}}}\right),
\]
which is relevant if $n-j$ is large. Combining this with Lemma~\ref{momentxinu} yields the result for the covariance under $\nu$.
\item If $n-j$ is not large but $j$ (and $n$) are, we are looking at the process after large times so the configurations seen from the front are close to configurations sampled by $\nu$. We can use the same trick as in the proof of the previous lemma, replacing the expectations by the covariances to obtain the second point. Indeed, we have 
\begin{multline*}
\Cov_{\sigma}\left[f(\xi_j),f(\xi_n)\right]-\Cov_{\nu}\left[f(\xi_1),f(\xi_{n-j+1})\right]
\\
= \E_{\sigma}\left[f(\xi_j)f(\xi_n)\right]-\E_{\nu}\left[f(\xi_1)f(\xi_{n-j+1})\right]
\\
-\big( \E_{\sigma}\left[f(\xi_j)\right]\E_{\sigma}\left[f(\xi_n)\right]
-\E_{\nu}\left[f(\xi_1)\right]\E_{\nu}\left[f(\xi_{n-j+1})\right]\big).
\end{multline*}
Using the Markov property at time $t_{j-1}$, finite speed propagation under the hypothesis $d^*(j-1)\geq \overline{v}(n-j+1)$ and Theorem~\ref{thm:coupling}, we can show that 
%$d^* t_{j-1}\geq \overline{v}t_{n-j+1}$
\[
|\E_{\sigma}\left[f(\xi_j)f(\xi_n)\right]-\E_{\nu}\left[f(\xi_1)f(\xi_{n-j+1})\right]|=O\left(e^{-\gamma e^{(\log j)^{1/4}}}\right).
\]
We conclude using Lemma~\ref{momentxinu} which says that
\begin{align*}
|\E_{\sigma}\left[f(\xi_j)\right]-\E_{\nu}\left[f(\xi_1)\right]|&=O(e^{-\gamma e^{(\log j)^{1/4}}}) 
\\
\text{ and }|\E_{\sigma}\left[f(\xi_n)\right]-\E_{\nu}\left[f(\xi_{n-j+1})\right]|&=O(e^{-\gamma e^{(\log n)^{1/4}}}).\qedhere
\end{align*}
\end{enumerate}
\end{proof}
The second point in the previous lemma can be generalized in the following way.

\begin{lemma}\label{momentgal}
For any $k,n\in\N$ such that $d^*(k-1)\geq \overline{v}n$ and any bounded function $F:\R^n\to \R$
\[
\sup_{\sigma\in LO_0}\big|\E_{\sigma}\left[F(\xi_k,\ldots, \xi_{k+n-1})\right]-\E_{\nu}\left[F(\xi_1,\ldots, \xi_{n})\right]\big|=O\left(\|F\|_{\infty}e^{-\gamma e^{(\log k)^{1/4}}}\right).
\]
\end{lemma}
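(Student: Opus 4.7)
The plan is to adapt the strategy of Lemma~\ref{momentxinu} and item~(2) of Lemma~\ref{covxinu}: combine the Markov property at time $k-1$ with finite speed of propagation to localise the dependence of the expectation on the initial configuration, and then invoke Theorem~\ref{thm:coupling} to compare the two laws at time $k-1$. More precisely, I would start by writing
\[
\E_\sigma\bigl[F(\xi_k,\ldots,\xi_{k+n-1})\bigr]=\int G(\sigma')\,d\mu^\sigma_{k-1}(\sigma'),\qquad \E_\nu\bigl[F(\xi_1,\ldots,\xi_n)\bigr]=\int G(\sigma')\,d\nu(\sigma'),
\]
where $G(\sigma'):=\E_{\sigma'}[F(\xi_1,\ldots,\xi_n)]$ depends on $\sigma'$ only through $\theta_{X(\sigma')}\sigma'$ by translation invariance of the FA-1f dynamics, and the second identity uses the invariance of $\nu$ for the process seen from the front.

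Next, I would localise $G$ via the map $\Phi_{k-1}$ introduced in the proof of Lemma~\ref{momentxinu}: for $\sigma'\in LO_0$, set $\Phi_{k-1}(\sigma')$ equal to $\theta_{X(\sigma')}\sigma'$ on $[0,d^*(k-1)]$ and to $1$ elsewhere. Coupling by the basic coupling the FA-1f evolutions started from $\sigma'$ and from $\Phi_{k-1}(\sigma')$, the two front trajectories — and therefore the two vectors of increments $(\xi_1,\ldots,\xi_n)$ — agree on the intersection of the events that neither front exits the window $[-\overline v n,\overline v n]$ during $[0,n]$ and that no sequence of successive clock rings links $[d^*(k-1)+1,\infty)$ to this window before time $n$. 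Under the hypothesis $d^*(k-1)\geq \overline v n$, Lemma~\ref{finitespeedpropagation} bounds the complementary probability by $O(e^{-\gamma' n})$, so that
\[
\sup_{\sigma'\in LO_0}\bigl|G(\sigma')-G(\Phi_{k-1}(\sigma'))\bigr|\leq C\|F\|_\infty\,e^{-\gamma' n}.
\]

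Finally, since $\sigma'\mapsto G(\Phi_{k-1}(\sigma'))$ depends on $\sigma'$ only through $\theta_{X(\sigma')}\sigma'$ restricted to $[0,d^*(k-1)]$,
\[
\Bigl|\int G(\Phi_{k-1}(\sigma'))\,d\mu^\sigma_{k-1}(\sigma')-\int G(\Phi_{k-1}(\sigma'))\,d\nu(\sigma')\Bigr|\leq\|F\|_\infty\,\bigl\|\tilde\mu^\sigma_{k-1}-\nu\bigr\|_{[0,d^*(k-1)]},
\]
which Theorem~\ref{thm:coupling} controls by $O(\|F\|_\infty\,e^{-ce^{(\log(k-1))^{1/4}}})$. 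Adding the three contributions, applied both to $\mu^\sigma_{k-1}$ and to $\nu$, gives the announced estimate, whose decay rate is governed by the slowest term, namely the output of Theorem~\ref{thm:coupling}. The main technical subtlety is the localisation step: one has to track both coupled front trajectories simultaneously on $[0,n]$ and verify, using the hypothesis relating $k$ and $n$, that information from outside $[0,d^*(k-1)]$ cannot reach either of them before time $n$ with non-negligible probability.
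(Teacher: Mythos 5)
Your overall architecture is the paper's: Markov property at time $k-1$, replacement of $\sigma'$ by the localised configuration $\Phi_{k-1}(\sigma')$, and then comparison of $\tilde\mu^\sigma_{k-1}$ and $\nu$ on $[0,d^*(k-1)]$ via Theorem~\ref{thm:coupling} (your last step is fine, and even slightly simpler than the paper's since boundedness of $F$ lets you use the total variation distance directly). The gap is in the localisation step, which you yourself flag as the main subtlety. First, the bound $O(e^{-\gamma' n})$ does not follow from your decomposition: under the sole hypothesis $d^*(k-1)\geq\overline{v}n$, the discrepancy region $[d^*(k-1)+1,\infty)$ may be adjacent to your window $[-\overline{v}n,\overline{v}n]$ (take $d^*(k-1)$ barely above $\overline{v}n$), so the event that a chain of rings links it to the window within time $n$ is not unlikely at all, while the event that a front exits the window has probability exponentially small in $n$ only, not in $k$. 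Second, and more importantly, even if a bound of the form $C\|F\|_\infty e^{-\gamma' n}$ held, it would not suffice: the lemma requires an error $O\bigl(\|F\|_\infty e^{-\gamma e^{(\log k)^{1/4}}}\bigr)$ uniformly over all admissible $(k,n)$, in particular for fixed $n$ and $k\to\infty$, and in that regime your localisation error is a constant, so it cannot be ``governed by the output of Theorem~\ref{thm:coupling}'' as you claim.

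The correct estimate, which is what the paper does (exactly as in Lemma~\ref{momentxinu}), is that under the basic coupling the fronts started from $\sigma'$ and from $\Phi_{k-1}(\sigma')$ can disagree at some time in $[0,n]$ only if a chain of successive rings emanating from the right edge $d^*(k-1)$ of the matched zone meets a chain emanating from the initial front position $0$ (the front can only visit a site $z>0$ through successive rings on $[0,z]$), i.e.\ only on the event $\tilde{F}(0,d^*(k-1),n)$. The hypothesis $d^*(k-1)\geq\overline{v}n$ is precisely what makes Lemma~\ref{finitespeedpropagation} applicable, giving
\[
\sup_{\sigma'\in LO_0}\bigl|G(\sigma')-G(\Phi_{k-1}(\sigma'))\bigr|\leq 2\|F\|_\infty\,\P\bigl(\tilde{F}(0,d^*(k-1),n)\bigr)\leq 2\|F\|_\infty e^{-d^*(k-1)},
\]
(the paper writes this with an exponent $1/2$ via Cauchy--Schwarz). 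This error decays exponentially in $k$, not merely in $n$, and is therefore absorbed in the rate $e^{-\gamma e^{(\log k)^{1/4}}}$ coming from Theorem~\ref{thm:coupling}. With this correction your argument matches the paper's proof.
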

\begin{proof} We apply again the method we used for Lemma~\ref{momentxinu}. Applying Markov property at time $t_{k-1}$ we have $\E_{\sigma}\left[F(\xi_k,\ldots,\xi_{k+n-1})\right]=\int d\mu^{\sigma}_{t_{k-1}}(\sigma')\E_{\sigma'}\left[F(\xi_1,\ldots,\xi_n)\right]$. We use the same notation $\Phi_t(\sigma)$ for the configuration equal to $\theta_{X(\sigma)}\sigma$ on $[0,d^*t]$ and equal to $1$ elsewhere; then %$X$ and $\tilde{X}$ are the random variable $(\xi_1,\ldots,\xi_{n})$ when the initial configurations are respectively $\sigma$ and $\Phi_t(\sigma)$. So 
\begin{align*}
|\E_{\sigma'}\left[F(\xi_1,\ldots,\xi_n)\right]-&\E_{\Phi_{k-1}(\sigma')}\left[F(\xi_1,\ldots,\xi_n)\right]| \\
&\leq 2\sqrt{\sup_{\sigma\in LO_0}\E_{\sigma}\left[F(\xi_1,\ldots,\xi_n)^2\right]} \sqrt{\P(F(0,d^*(k-1),n))}
\\
&\leq 2\|F\|_{\infty}e^{-\frac{d^*(k-1)}{2}}\text{ if }d^*(k-1)\geq \overline{v}n.
\end{align*}
Therefore,
\begin{multline*}
\big|\int d\mu_{k-1}^{\sigma}(\sigma')\E_{\Phi_{k-1}(\sigma')}\left[F(\xi_1,\ldots,\xi_n)\right]-\int d\nu(\sigma')\E_{\Phi_{k-1}(\sigma')}\left[F(\xi_1,\ldots,\xi_n)\right]\big|
\\
\leq 2\|F\|_{\infty}\sup_{\sigma\in LO_0}\left\|\mu^{\sigma}_{k-1}-\nu\right\|^{1/2}_{\left[0,d^*(k-1)\right]}.
\end{multline*}
Finally, putting everything together and applying Theorem~\ref{thm:coupling}, we obtain the announced result.\end{proof}

\subsection{Law of large numbers and central limit theorem}

\begin{proof}[Proof of Theorem~\ref{thm:limit}] We first proove~\eqref{eq:lgn}. We take inspiration from the proof of the law of large numbers of~\cite{GK}. %\[X(\sigma_t)-vt=S_n + X(\sigma_t)-X(\sigma_{t_{N}})\]
Let $S_n=\sum_{i=1}^{n} \tilde{\xi}_{i}$ where $\tilde{\xi}_{i}={\xi}_{i}-\E_{\sigma}\left[{\xi}_{i}\right]$ and $\Var_{\sigma} [\tilde{\xi}_{i}]\leq \E_{\sigma}\left[{\xi}_{i}^2\right]\leq c$ by Lemma~\ref{moment2xi}.
First we compute the variance of $\frac{S_n}{n}$:
\begin{align*}
\Var_{\sigma}\left[\frac{S_n}{n}\right]&=\frac{1}{n^2}\sum_{i=1}^n \Var_{\sigma} \left[\tilde{\xi}_{i}\right]
+\frac{2}{n^2}\sum_{j<k}^n \Cov _{\sigma}\left[\tilde{\xi}_{j},\tilde{\xi}_{k}\right].
\end{align*}
The first sum is dominated by $cn$. We control the second one thanks to Lemma~\ref{covxinu}. %Let $n_0$ be the quantity from Lemma~\ref{covxinu}:
\begin{align*}
\sum_{k=1}^n \sum_{j=1}^{k-1} \Cov _{\sigma}\left[\tilde{\xi}_{j},\tilde{\xi}_{k}\right]%&=\sum_{k=1}^n \left(\sum_{j=1}^{k-n_0} \Cov \left[\tilde{\xi}_{j},\tilde{\xi}_{k}\right]+\sum_{j=k-n_0}^{k-1} \Cov \left[\tilde{\xi}_{j},\tilde{\xi}_{k}\right]\right)
%\\
&\leq \sum_{k=1}^n \sum_{j=1}^{k-1} C e^{-\gamma e^{(\log (k-j))^{1/4}}}%+\sum_{j=k-n_0\wedge 0}^{k-1} \left(\big|\Cov_{\nu} \left[\tilde{\xi}_{1},\tilde{\xi}_{k-j+1}\right]\big|+Ce^{-\gamma e^{(\log j)^{1/4}}}\right)\right)
\\
%&\leq \sum_{k=1}^n \sum_{j=1}^k  \frac{C}{j^3}+n_0\sum_{k=n_0}^n \left( C(n_0)+C e^{-\gamma e^{(\log k-n_0)^{1/4}}}\right)
%\\
%&\leq \frac{C'}{n}+n_0 \left( C(n_0)\cdot n+\frac{C}{n}\right)
&\leq n\sum_{j=1}^\infty Ce^{-\gamma e^{(\log j)^{1/4}}}.
\end{align*} 
%where $C(n_0)=\max_{j\in \{1,n_0\}} \big|\Cov_{\nu} \left[\tilde{\xi}_{1},\tilde{\xi}_{k-j+1}\right]\big|$.

So, $\Var_{\sigma}\left[\frac{S_n}{n}\right]\leq \frac{C}{n}$. Applying Chebychev Inequality and Borel-Cantelli Lemma, we deduce that $\frac{S_{n^2}}{n^2}$ converges a.s. To prove the convergence of the complete sequence, we need to control $\Var_{\sigma}\left[\frac{S_p}{p}-\frac{S_n}{n}\right]$ with $p=p(n)=\lfloor\sqrt{n}\rfloor^2$. It is easy to check that $n-p\leq 2\sqrt{n}\leq p$ for $n$ large enough. We introduce the following quantity
\[D_{n,p}=\tilde{\xi}_{p+1}+\ldots+\tilde{\xi}_{n}.\]
We have that

\begin{align*}
\Var_{\sigma}\left[\frac{S_p}{p}-\frac{S_n}{n}\right]&=\Var_{\sigma} \left[\left(\frac{1}{p}-\frac{1}{n}\right) S_p-\frac{1}{n} D_{n,p}\right]
\\
&=\left(\frac{1}{p}-\frac{1}{n}\right)^2 \Var_{\sigma} \left[S_p\right]+\frac{1}{n^2} \Var_{\sigma} \left[D_{n,p}\right]-2\left(\frac{1}{p}-\frac{1}{n}\right)\frac{1}{n}\Cov_{\sigma} \left[S_p,D_{n,p}\right].
\end{align*}
The two first terms are dominated by $\frac{C(n-p)}{n^{2}}\leq\frac{2C}{n^{3/2}}$. Using the same computations as above, we have that
\begin{align*}
\Cov_{\sigma}[S_p,D_{n,p}]= \sum_{i=1}^p\sum_{j=p+1}^n \Cov_{\sigma} [\tilde\xi_i,\tilde\xi_j]\leq C p.
\end{align*}
This yields
\begin{align*}
\Var_{\sigma}\left[\frac{S_p}{p}-\frac{S_n}{n}\right]&\leq \frac{C}{n^{3/2}},
\end{align*}
and by the same arguments as previously, $\frac{S_{p(n)}}{p(n)}-\frac{S_n}{n}$ converges almost surely to $0$, as well as $\frac{S_{p(n)}}{p(n)}$. We deduce that almost surely, $\frac{S_n}{n}$ converges to $0$. Moreover, by Lemma~\ref{momentxinu}, $\frac{1}{n}\sum_{i=1}^n\E_\sigma[\xi_i]$ is the Cesaro mean of a convergent sequence. It is also clear (e.g.\ by finite speed of propagation) that $\frac{X(\sigma_t)-\sum_{i=1}^n\xi_i}{t}$ converges a.s.\ to $0$. To identify the limit in \eqref{eq:lgn} and conclude, we % and we deduce~\eqref{eq:lgn}. To conclude on the convergence of $\frac{X(\sigma_t)}{t}$ we 
observe that
\[\frac{d}{dt} \E_{\sigma}\left[X(\sigma_t)\right]=p\cdot\tilde{\mu}_t^{\sigma}(\tilde{\sigma}(1)=0)-q,\]
which converges to the announced velocity.

To prove~\eqref{eq:tlc}, we apply Theorem~\ref{thm:TCLgal} with $\Phi(k)=\exp(-\exp((\log k)^{1/4}))$, $c^*=\frac{d^*}{\overline{v}}$ and $\nu$ the measure from~Theorem~\ref{thm:coupling}. The required hypotheses were proved in Subsection~\ref{section:hypotheses}. \end{proof}

\begin{appendices}

\section{A central limit theorem}
 The following result is a general version of the central limit proved in~\cite{GLM15}, which was based on a Bolthausen result~\cite{bolt}, itself inspired by Stein's method. It gives a central limit theorem for random variables with some mixing conditions but without stationarity hypotheses. 
  
  \begin{theorem}\label{thm:TCLgal}
  Let $(\sigma_t)$ be a Markov process and $(\mathcal F_t)$ the adapted filtration. Let $(X_i)_{i\geq 1}$ be real random variables satisfying the following hypotheses:
 \begin{enumerate}
 \item\begin{enumerate}
 \item\label{hyp_moment} $\displaystyle \sup_{\sigma}\sup_{n\in\N}\E_{\sigma}[X_n^2]<\infty$;
 \item \label{hyp_adapted} for every $i\geq 1$, $X_i$ is measurable w.r.t.\ $\mathcal{F}_i$;
 \item \label{hyp_markov} for every $k,n\geq 1$, $f:\R^n\to\R$ measurable such that $\sup_\sigma\E_\sigma[f(X_1,\ldots,X_n)]<\infty$, for all initial $\sigma$, we have the Markov property
 \begin{equation}
\E_\sigma[f(X_k,\ldots,X_{k+n-1})\mid\mathcal{F}_{k-1}]=\E_{\sigma_{k-1}}[f(X_1,\ldots,X_{n})];
 \end{equation}
 \end{enumerate}
 \item There exist a decreasing function $\Phi$, constants $ {C}, c^*\geq 1$ and $v\in\R$ and a measure $\nu$ such that%linear operator $ {\E_\nu} {:\R^\Omega}\to \R$ such that
 \begin{enumerate}
 \item\label{hyp_Phi} $\displaystyle \lim_{n\to\infty} e^{(\log n)^2} \Phi(n)=0$;
 \item  {for every $i\geq 1$, $ {\E_\nu}[X_i]=v$;}
  \item  for every $k$, $f:\R\to\R$ s.t.\ $e^{-|x|}f(x)\in L^1(\R)$,
  \begin{equation}
  \sup_{\sigma}|\E_{\sigma}\left[f(X_{k})\right]-\E_\nu[f(X_1)]|\leq C\Phi(k);
  \label{hyp_esp}
  \end{equation}
   \item  for every $k,n$ such that $k {\geq}c^* n$,
 \begin{equation}
 \sup_{\sigma}|\E_{\sigma}\left[X_k X_{k+n}\right] - {\E_\nu}[X_1 X_{n+1}]| {\leq C}\Phi(k);
 \label{hyp_esp2}
 \end{equation}
\item for every $k,n$ such that $k>c^* n$ and any bounded function $F:\R^n\to \R$ 
 \begin{equation}
 \sup_{\sigma}\big|\E_{\sigma}\left[F(X_k, \ldots, X_{k+n-1})\right] - {\E_\nu}\left[F(X_1,\ldots,X_{n})\right]\big|\leq C\|F\|_{\infty}\Phi(k).
 \label{hyp_espF}
 \end{equation}
 \end{enumerate}
 \end{enumerate}
% $\sigma^2=\sum_{n=1}^\infty \Cov_{\sigma}(X_1, X_n)$ 
Then, there exists $s\geq 0$ such that 
\[
\frac{\sum_{i=1}^n X_i-vn}{\sqrt{n}}\xrightarrow{\mathscr{L}} \mathcal{N}(0,s^2).
\]
\end{theorem}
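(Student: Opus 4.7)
I would follow Bolthausen's variant of Stein's method~\cite{bolt}, adapted (as in~\cite{GLM15}) to the non-stationary setting.

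\textbf{Reductions and asymptotic variance.} Apply hypothesis~\ref{hyp_esp} to $f(x)=x$ (valid since $|x|e^{-|x|}\in L^1$) to get $\sup_\sigma|\E_\sigma[X_k]-v|\le C\Phi(k)$; subtracting $v$ from each $X_i$, we may assume $v=0$. Next, establish the existence of the limiting variance
\[
s^2:=\lim_{n\to\infty}\frac{1}{n}\Var_\sigma\!\left(\sum_{i=1}^n X_i\right)=\E_\nu[X_1^2]+2\sum_{k\ge 2}\E_\nu[X_1 X_k].
\]
This requires comparing $\E_\sigma[X_iX_j]$ with the $\nu$-analogue via \ref{hyp_esp2} and the Markov property \ref{hyp_markov}, together with the decay of $\E_\nu[X_1X_k]$ in $k$ (which follows from \ref{hyp_esp2} read under $\nu$ combined with $\E_\nu[X_1]=0$). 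Hypothesis \ref{hyp_moment} and the decay rate provided by \ref{hyp_Phi} absorb all error terms. If $s^2=0$, Chebyshev's inequality yields the CLT with degenerate limit; henceforth assume $s^2>0$.

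\textbf{Stein--Bolthausen via blocks.} Set $W_n:=(s\sqrt n)^{-1}\sum_{i=1}^n X_i$ and $\phi_n(t):=\E_\sigma[e^{itW_n}]$; it suffices to show $\phi_n'(t)+t\phi_n(t)\to 0$ for every $t\in\R$, which gives $\phi_n(t)\to e^{-t^2/2}$. Choose a block length $\ell_n\to\infty$ with $\ell_n/\sqrt n\to 0$ and $n\Phi(\ell_n)\to 0$ (possible by \ref{hyp_Phi}; e.g.\ $\ell_n=\lfloor(\log n)^3\rfloor$). Split $W_n=W_n^{(k)}+R_n^{(k)}$ with $R_n^{(k)}:=(s\sqrt n)^{-1}\sum_{|j-k|\le\ell_n}X_j$. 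Writing
\[
\phi_n'(t)=\frac{i}{s\sqrt n}\sum_{k=1}^n \E_\sigma[X_k e^{itW_n}]
\]
and expanding $e^{itR_n^{(k)}}=1+itR_n^{(k)}+O\bigl((tR_n^{(k)})^2\bigr)$ decomposes the sum into three pieces: a zeroth-order term $\frac{i}{s\sqrt n}\sum_k\E_\sigma[X_k e^{itW_n^{(k)}}]$, vanishing in the limit by the approximate independence between $X_k$ and $W_n^{(k)}$; a first-order term $-\frac{t}{s^2n}\sum_{k,|j-k|\le\ell_n}\E_\sigma[X_kX_j e^{itW_n^{(k)}}]$, asymptotic to $-t\phi_n(t)\cdot s^{-2}n^{-1}\sum_{k,j}\E_\sigma[X_kX_j]$ by the same decorrelation, with the coefficient tending to $1$ by the variance computation above; and a Taylor remainder of order $t^2\ell_n/\sqrt n\to 0$ bounded via \ref{hyp_moment} and Cauchy-Schwarz.

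\textbf{Main obstacle.} The hard point is the quantitative decorrelation in the first two contributions. My plan is to condition on $\mathcal F_{k-\ell_n-1}$ to freeze the ``past'' part of $W_n^{(k)}$, then invoke \ref{hyp_markov} to reduce the residual expectation to one starting from $\sigma_{k-\ell_n-1}$, which involves variables $(X_1,\ldots,X_{n-k+\ell_n+1})$; hypothesis \ref{hyp_espF} then replaces the $\sigma_{k-\ell_n-1}$-expectation by the $\nu$-expectation with error $C\Phi(k-\ell_n-1)$, after which a further application of the Markov property at time $\ell_n+1$ decouples $X_k$ from the right block under $\nu$, using \ref{hyp_esp} and $\E_\nu[X_1]=0$. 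The small-index regime $k\lesssim\ell_n$, where \ref{hyp_espF} cannot be applied usefully, will be controlled by a crude $L^2$ bound via \ref{hyp_moment}, whose cumulative contribution is $O(\ell_n/\sqrt n)=o(1)$. Calibrating $\ell_n$ so that the accumulated errors (of sizes $n\Phi(\ell_n)$ and $\ell_n/\sqrt n$) both vanish is exactly what condition \ref{hyp_Phi} ensures.
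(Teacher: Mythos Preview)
Your overall architecture (reduce to $v=0$, compute the limiting variance, apply Bolthausen's characteristic-function identity with a block length $\ell_n$) is the same as the paper's, and the variance computation and the case $s^2=0$ are fine. The gap is in your ``main obstacle'' paragraph, specifically the sentence ``hypothesis~\eqref{hyp_espF} then replaces the $\sigma_{k-\ell_n-1}$-expectation by the $\nu$-expectation with error $C\Phi(k-\ell_n-1)$''. After conditioning on $\mathcal F_{k-\ell_n-1}$ and shifting, the residual expectation is $\E_{\sigma'}\bigl[X_{\ell_n+1}\,\exp\bigl(it\cdot(\text{future part})\bigr)\bigr]$, and the future part of $W_n^{(k)}$ involves the variables $X_{2\ell_n+2},\ldots,X_{n-k+\ell_n+1}$: this is a block of length of order $n-k$, not of order $\ell_n$. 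Hypothesis~\eqref{hyp_espF} compares $\E_{\sigma'}[F(X_j,\ldots,X_{j+m-1})]$ to $\E_\nu[F(X_1,\ldots,X_m)]$ only when the starting index satisfies $j>c^*m$; here $j\sim\ell_n$ while $m\sim n-k$, so the condition fails for every $k$ except those within $O(\ell_n)$ of $n$. The same obstruction recurs in your treatment of the first-order term $\E_\sigma[X_kX_j\,e^{itW_n^{(k)}}]$. In other words, the hypotheses of this theorem do \emph{not} furnish a mixing inequality that is uniform in the number of variables on each side, which is exactly what the classical Bolthausen argument in~\cite{bolt} uses; this is the substantive difference from the stationary mixing setting.

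The paper circumvents this by \emph{not} stopping the Taylor expansion at second order: it expands $e^{i\lambda(S_n-S_{j,n})/\sqrt{\alpha_n}}$ to order $M\asymp\log n$, so that every term in the polynomial part is of the form $\E_\sigma\bigl[Y_j\prod_{k=1}^m Y_{i_k}\bigr]$ with $m\le M$. A combinatorial lemma then handles each such product by iterated conditioning and applications of~\eqref{hyp_esp}--\eqref{hyp_espF} to blocks of length at most $M$; the point is that with $\ell_n=n^{1/3}$ one has $\ell_n\gg (c^*)^M$, so the index-versus-length condition in~\eqref{hyp_espF} is always met. The Taylor remainder beyond order $M$ is controlled via a sub-Gaussian moment bound $\E_\sigma[e^{\beta|S_n|/\sqrt n}]\le 2e^{c\beta^2}$, itself proved by chopping $S_n$ into $\ell_n$-blocks and applying~\eqref{hyp_espF} block by block. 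Two smaller remarks: the paper first truncates the $X_i$ to reduce to the bounded case (needed for the combinatorial lemma and the exponential-moment bound), which your sketch omits; and your illustrative choice $\ell_n=\lfloor(\log n)^3\rfloor$ does not actually give $n\Phi(\ell_n)\to 0$ under~\eqref{hyp_Phi} alone --- one needs $\ell_n$ at least of order $e^{\sqrt{\log n}}$.
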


 {\begin{lemma}\label{lem:covariances}
The hypotheses of Theorem~\ref{thm:TCLgal} imply that there exists $C'>0$ such that for every $i\leq j$,
\begin{align}
\sup_\sigma\left|\Cov_\sigma[X_i,X_j]\right|&\leq C'\Phi(j-i),\label{hyp_cov2}\\
\left|\Cov_\nu[X_i,X_j]\right|&\leq C'\Phi(j-i),\label{hyp_covnu}\\
\sup_\sigma\left|\Cov_\sigma[X_i,X_j]-\Cov_\nu[X_1,X_{j-i+1}]\right|&\leq C'\Phi(i/c^*)\wedge\Phi(j-i).\label{eq:covariances}
\end{align}
\end{lemma}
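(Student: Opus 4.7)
The three bounds follow by combining the Markov property of hypothesis~(\ref{hyp_markov}) with the mixing estimate~\eqref{hyp_esp}, mimicking the pattern of Lemma~\ref{covxinu}. I would first establish~\eqref{hyp_cov2}: conditioning on $\mathcal{F}_i$ and using~(\ref{hyp_markov}) with $f(x)=x$,
\[
\Cov_\sigma[X_i,X_j] \;=\; \Cov_\sigma\!\Bigl[X_i,\,\E_{\sigma_i}[X_{j-i}]-v\Bigr],
\]
since subtracting a constant from an argument does not alter a covariance. By~\eqref{hyp_esp} applied to the identity (admissible because $|x|e^{-|x|}\in L^1$), the factor $\E_{\sigma_i}[X_{j-i}]-v$ is uniformly bounded by $C\Phi(j-i)$ almost surely. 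Cauchy--Schwarz together with the uniform second moment bound~(\ref{hyp_moment}) then yields~\eqref{hyp_cov2}. The inequality~\eqref{hyp_covnu} follows by the same computation: the Markov identity extends to $\nu$-initial conditions by integration against $\nu$, and $\E_\nu[X_i^2]=\int \E_\sigma[X_i^2]\,d\nu(\sigma)$ inherits the uniform bound.

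For~\eqref{eq:covariances} I would prove the two halves of the minimum separately. The $\Phi(j-i)$ half is immediate from~\eqref{hyp_cov2}, \eqref{hyp_covnu} and the triangle inequality. The $\Phi(i/c^*)$ half splits into two cases according to the sign of $i-c^*(j-i)$. When $i\ge c^*(j-i)$, expand
\[
\Cov_\sigma[X_i,X_j]-\Cov_\nu[X_1,X_{j-i+1}]\;=\;\bigl(\E_\sigma[X_iX_j]-\E_\nu[X_1X_{j-i+1}]\bigr)-\bigl(\E_\sigma[X_i]\E_\sigma[X_j]-v^{2}\bigr),
\]
using the mean hypothesis $\E_\nu[X_1]=\E_\nu[X_{j-i+1}]=v$. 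The first bracket is controlled by~\eqref{hyp_esp2} with $k=i$, $n=j-i$, and the second by writing $\E_\sigma[X_i]\E_\sigma[X_j]-v^{2}=\E_\sigma[X_i](\E_\sigma[X_j]-v)+v(\E_\sigma[X_i]-v)$ and applying~\eqref{hyp_esp} to the identity twice; both brackets are $O(\Phi(i))$, and since $c^*\ge 1$ and $\Phi$ is decreasing, $\Phi(i)\le \Phi(i/c^*)$. In the complementary regime $i<c^*(j-i)$ we have $\Phi(j-i)\le \Phi(i/c^*)$, so the already-established $\Phi(j-i)$ bound dominates.

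I do not foresee a real obstacle: the proof is a mechanical assembly of Cauchy--Schwarz, the Markov property and the decay hypotheses. The only step requiring a little attention is that the uniform bound of~\eqref{hyp_esp} must be applied inside a conditional expectation at the random state $\sigma_i$; this is legitimate precisely because the estimate is uniform in the starting configuration.
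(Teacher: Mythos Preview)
Your argument is correct and follows essentially the same route as the paper's proof: condition on $\mathcal{F}_i$ via the Markov property to reduce $\Cov_\sigma[X_i,X_j]$ to a covariance with the bounded random variable $\E_{\sigma_i}[X_{j-i}]-v$, invoke~\eqref{hyp_esp} and the second-moment bound, and for~\eqref{eq:covariances} split according to whether $i\ge c^*(j-i)$ so that~\eqref{hyp_esp2} applies. Your write-up is in fact slightly more explicit than the paper's in treating the two halves of the minimum and the complementary regime $i<c^*(j-i)$.
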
}

\begin{proof}
 {On the one hand, by Hypothesis \eqref{hyp_adapted} and the Markov property, we have
\begin{align*}
\Cov_\sigma[X_i,X_j]&=\E_\sigma\left[X_i\left(\E_{\sigma_i}[X_{j-i}]-\E_\sigma[X_j]\right)\right]\\
&=O(\Phi(j-i)),
\end{align*}
where the last equality comes from \eqref{hyp_esp} applied twice to $f(x)=x$ and Hypothesis~\eqref{hyp_moment}. The same strategy also shows $\Cov_\nu[X_1,X_{j-i+1}]=O(\Phi(j-i))$.}

 On the other hand, if $i\geq c^*(j-i+1)$, we can apply \eqref{hyp_esp2} and again \eqref{hyp_esp} to get
\[
\Cov_\sigma[X_i,X_j]=\Cov_\nu[X_1,X_{j-i+1}]+O(\Phi(i)).\qedhere
\]
\end{proof}

 \begin{proof}[Proof of Theorem~\ref{thm:TCLgal}]
 { We begin by showing that 
 \begin{equation}\label{eq:limitvar}
\lim_{n\to\infty}\frac{1}{n}\Var_\sigma\left[\sum_{i=1}^nX_i\right]=
\Var_\nu[X_1]+2\sum_{k=2}^\infty\Cov_\nu[X_1,X_k]=:s^2<\infty.
\end{equation}
We have $s^2<\infty$ by $\eqref{hyp_covnu}$. Moreover,
 by \eqref{hyp_esp} applied to $f(x)=x^2$, 
\[
\frac{1}{n}\sum_{i=1}^n\Var_\sigma[X_i]=\frac{1}{n}\sum_{i=1}^n\left(\Var_\nu[X_1]+O(\Phi(i))\right)=\Var_\nu[X_1]+O(1/n).
\]
Similarly, by \eqref{eq:covariances}, 
\begin{align*}
\frac{1}{n}\sum_{i<j\leq n}\Cov_\sigma[X_i,X_j]&=\frac{1}{n}\sum_{i<j\leq n}\left(\Cov_\nu[X_1,X_{j-i+1}]+O(\Phi(i/c_*)\Phi(j-i))\right)\\
&=\sum_{k=2}^\infty\Cov_\nu[X_1,X_k]+o(1),
\end{align*}
which concludes the proof of \eqref{eq:limitvar}.
}
 
  {The proof then depends on the value of $s^2$: if $s^2=0$, Chebychev's inequality shows that $\frac{\sum_{i=1}^n X_i-vn}{\sqrt{n}}\xrightarrow{}0$ in probability.}
 
  {If $s^2>0$, the proof appeals to the variation of Stein's method found in \cite{bolt}.} For every $i$, let $Y_i=X_i-v$. In particular for every $i$, $ {\E_\nu}[Y_i]=0$. First, we prove the result for random variables $Y_i$ bounded by $C_Y$ and satisfying the hypotheses of the theorem. Let $\ell_n=n^{1/3}$ and 
 \begin{align*}
  {S_n=\sum_{k=1}^nY_k},\quad S_{j,n}=\sum_{k=1}^n \1_{|k-j|\leq \ell_n} Y_k\quad\text{and}\quad
 \alpha_n= \sum_{j=1}^n \E_{\sigma}\left[Y_jS_{j,n}\right].
 \end{align*}
The theorem's hypotheses and the boundedness of the $Y_i$'s imply that there exists $C'$ such that
\begin{enumerate}
%\item For every $j<k$ we have that
%\begin{align} \sup_{\sigma}|\Cov_{\sigma}\left[Y_j,Y_k\right]| {\leq C'}\Phi(k-j).
%\end{align}
\item For every $i\leq j<k \leq l$ we have that
\begin{align}
\label{hyp_cov4_1} 
\sup_{\sigma}|\Cov_{\sigma}\left[Y_iY_j,Y_kY_l\right]| {\leq C'}\Phi(k-j)\text{ if }(k-j) \geq  {c^*(l-k)}.
\end{align}
% if $j=k<l$ then $\Phi(l-j)$; if i<j=k then
\item For every $i\leq j\leq k < l$ we have that
\begin{align}
\label{hyp_cov4_2} 
\sup_{\sigma}|\Cov_{\sigma}\left[Y_iY_k,Y_jY_l\right]|& {\leq C'}\Phi(l-k),
\\
\label{hyp_cov4_3} 
\sup_{\sigma}|\Cov_{\sigma}\left[Y_iY_l,Y_jY_k\right]|& {\leq C'}\Phi(l-k).
\end{align}
% de maniere generale c'est Phi(l-le premier different)
%\item For every $i<j$ we have that%\noteO{Pareil que pour la remarque après le lemme 7.3 : à mon avis, tout ce qu'on a (et je pense que ça suffit), c'est $\leq C' \Phi(j-i)$, en combinant \eqref{hyp_cov2} et l'hypothèse \ref{hyp_espF} ou \ref{hyp_esp2}. On pourrait d'ailleurs préciser que c'est grâce à \ref{hyp_espF} qu'on peut contrôler $ {\E_\nu}[Y_1\ldots Y_n]$ par $C_Y^n$.}
%\begin{align}\label{hyp_cov_Psi}
%| {\E_\nu}(Y_i,Y_j)| {\leq C'}\Phi(j-i).
%\end{align}
\end{enumerate}
We just give details for the  {first item}. % {Applying hypotheses \eqref{hyp_adapted} and \eqref{hyp_markov}} (later referred to as ``Markov property'') at time $j$ and Cauchy-Schwarz inequality, we have that 
 %\begin{align*}
%\Cov_{\sigma}\left[Y_j,Y_k\right]&=\Cov_{\sigma}\big[ Y_j, {\E_{\sigma_j}\left[Y_{k-j} \right]}\big]\leq \sqrt{\E_{\sigma}\left[Y_j^2\right]} \sqrt{\E_{\sigma}\left[\left(\E_{\sigma_j} {\left[Y_{k-j} \right]}\right)^2\right]},
% \end{align*}
%and we conclude using hypotheses~\eqref{hyp_moment} and~\eqref{hyp_esp}  {with $f(x)=x$}. 
We have that
 \begin{align*}
\Cov_{\sigma}\left[Y_i Y_j,Y_k Y_l\right]&=\Cov_{\sigma}\big[Y_i Y_j,\E_{\sigma_j}\left[Y_{k-j} Y_{l-j}\right]- {\E_\nu}\left[Y_{1} Y_{l-k+1}\right]\big]\\
&\leq {2C_Y^2\sup_{\sigma'}\left|\E_{\sigma'}\left[Y_{k-j} Y_{l-j}\right]- {\E_\nu}\left[Y_{1} Y_{l-k+1}\right]\right|}%\sqrt{\E_{\sigma}\left[Y_i^2 Y_j^2\right]} \sqrt{\E_{\sigma}\left[\left(\E_{\sigma_j}\left[Y_{k-j} Y_{l-j}\right]- {\E_\nu}\left[Y_{1} Y_{l-k+1}\right]\right)^2\right]},
 \end{align*}
 and we conclude using hypothesis~\eqref{hyp_esp2}. %and the fact that the $(Y_i)$ are bounded. %on pourrait aussi supposer moment 4
The others cases are obtained applying the Markov property at time $k$ and hypothesis~\eqref{hyp_esp}.% Finally the last estimate~\eqref{hyp_cov_Psi} comes from~\eqref{hyp_cov2} and~\eqref{hyp_esp2}.
 
Thanks to~\eqref{hyp_cov2}, we have that 
 \begin{align*}
 |\Var_{\sigma}[S_n]-\alpha_n|&=\Big|\sum_{|i-j|>\ell_n}\Cov_{\sigma}[Y_i,Y_j] {-\sum_{|i-j|\leq\ell_n}\E_\sigma[Y_i]\E_\sigma[Y_j]}\Big|\\
 & {=O(n^2  \Phi(\ell_n)+1)= O(1)};
 \end{align*}
  so $\alpha_n=\Var_{\sigma} [S_n] {(1+o(1))}$ and it is enough to prove that $\frac{S_n}{\sqrt{\alpha_n}}$ is asymptotically normal.
 %changer al phrase, 
 
 The main idea is to use a lemma from~\cite{bolt}, itself inspired from Stein method, which is the following one.
 \begin{lemma}[Lemma~2 in~\cite{bolt}] Let $(\nu_n)_{n\in\N}$ be a sequence of probability measures in $\R$ with 
 \begin{enumerate}
 \item[(a)] $\displaystyle \sup_n\int |x|^2\nu_n(dx)<\infty$
 \item[(b)] \label{itemb}$\displaystyle \lim_{n\to\infty}\int (i\lambda-x)e^{i\lambda x}\nu_n(dx)=0$ for all $\lambda\in\R$.
 \end{enumerate}
 Then the sequence $(\nu_n)$ converges to the standard normal law.
 \end{lemma}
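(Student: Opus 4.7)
The plan is to use a characteristic function approach together with an Arzelà–Ascoli-type equicontinuity argument. Let $\phi_n(\lambda)=\int e^{i\lambda x}\nu_n(dx)$. Hypothesis (a) implies, by Cauchy--Schwarz, that $\int|x|\nu_n(dx)\le M:=\sup_n(\int x^2\nu_n(dx))^{1/2}<\infty$, so each $\phi_n$ is $C^1$ with $\phi_n'(\lambda)=i\int x\,e^{i\lambda x}\nu_n(dx)$ and $|\phi_n'(\lambda)|\le M$. In particular the family $(\phi_n)$ is uniformly bounded (by $1$) and uniformly Lipschitz, hence equicontinuous on $\R$.

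First, I would argue tightness. By Markov's inequality, $\nu_n([-R,R]^c)\le \sup_n\int x^2\nu_n(dx)/R^2\to 0$ uniformly in $n$ as $R\to\infty$, so $(\nu_n)$ is tight. To prove the lemma it suffices to show that every subsequential weak limit equals the standard normal. Fix a subsequence $(\nu_{n_k})$ converging weakly to some $\nu$. By Arzelà–Ascoli applied to $(\phi_{n_k})$ on compacts, and passing to a further subsequence if necessary, $\phi_{n_k}\to\phi$ uniformly on compact subsets of $\R$, where $\phi$ is the characteristic function of $\nu$ (by Lévy continuity).

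The key step is to turn hypothesis (b) into an ODE for $\phi$. Rewrite (b) as
\begin{equation*}
r_n(\lambda):=i\lambda\,\phi_n(\lambda)+i\,\phi_n'(\lambda)=\int(i\lambda-x)e^{i\lambda x}\nu_n(dx)\xrightarrow[n\to\infty]{}0\quad\text{for each }\lambda\in\R.
\end{equation*}
A direct differentiation shows $|r_n'(\lambda)|\le 1+2|\lambda|M+M^2$, so on any compact set the family $(r_n)$ is equicontinuous. Together with pointwise convergence to $0$, this upgrades to uniform convergence $r_n\to 0$ on compact sets. Integrating the identity $\phi_n'(\mu)=-\mu\,\phi_n(\mu)-i\,r_n(\mu)$ from $0$ to $\lambda$ gives
\begin{equation*}
\phi_n(\lambda)-1=-\int_0^\lambda \mu\,\phi_n(\mu)\,d\mu\;-\;i\int_0^\lambda r_n(\mu)\,d\mu.
\end{equation*}
Along the subsequence $n_k$, the first integral converges to $-\int_0^\lambda\mu\,\phi(\mu)d\mu$ by uniform convergence of $\phi_{n_k}$ on $[0,\lambda]$, and the second vanishes by uniform convergence of $r_{n_k}$ to $0$. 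Hence $\phi$ satisfies the integral equation $\phi(\lambda)=1-\int_0^\lambda\mu\,\phi(\mu)d\mu$, i.e. $\phi$ is $C^1$ with $\phi'(\lambda)=-\lambda\phi(\lambda)$ and $\phi(0)=1$. The unique solution is $\phi(\lambda)=e^{-\lambda^2/2}$, so $\nu=\mathcal{N}(0,1)$. Since every weak subsequential limit of the tight sequence $(\nu_n)$ equals $\mathcal{N}(0,1)$, the whole sequence converges weakly to $\mathcal{N}(0,1)$.

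The main obstacle I anticipate is the justification that $r_n\to 0$ \emph{uniformly} on compact sets (rather than only pointwise), because passing to the limit inside $\int_0^\lambda r_n(\mu)d\mu$ requires this upgrade. The crucial observation to overcome it is that the second moment bound in (a) not only ensures boundedness but also equi-Lipschitzness of the maps $\lambda\mapsto\int x^k e^{i\lambda x}\nu_n(dx)$ for $k=0,1$, so a standard Arzelà–Ascoli argument (equicontinuous family converging pointwise on a separable set converges uniformly on compacts) closes the gap.
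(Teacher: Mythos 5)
Your proof is correct. Note that the paper itself does not prove this statement: it is quoted as Lemma~2 of Bolthausen's paper \cite{bolt} and the proof is simply referred to there, so there is no in-paper argument to compare against; your write-up is essentially a faithful reconstruction of the standard (Bolthausen-style) argument. The chain is sound: the uniform second moment gives tightness and lets you differentiate under the integral, hypothesis (b) becomes $i\lambda\phi_n(\lambda)+i\phi_n'(\lambda)=r_n(\lambda)\to 0$, and integrating along a weakly convergent subsequence yields $\phi(\lambda)=1-\int_0^\lambda \mu\,\phi(\mu)\,d\mu$, whose unique solution $e^{-\lambda^2/2}$ identifies every subsequential limit as $\mathcal N(0,1)$. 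One simplification worth noting: the Arzel\`a--Ascoli upgrade of $r_n\to 0$ to uniform convergence on compacts, which you single out as the main obstacle, is not needed. Since $|r_n(\mu)|\le |\mu|+M$ and $|\phi_n|\le 1$, the bounded convergence theorem already gives $\int_0^\lambda r_{n_k}(\mu)\,d\mu\to 0$ and $\int_0^\lambda \mu\,\phi_{n_k}(\mu)\,d\mu\to\int_0^\lambda\mu\,\phi(\mu)\,d\mu$ from mere pointwise convergence, so the equi-Lipschitz bookkeeping (where, incidentally, the constant $1+|\lambda|M+M^2$ suffices in place of $1+2|\lambda|M+M^2$) can be dropped without loss.
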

 The proof can be found in~\cite{bolt}. We want to apply this lemma to the distributions of the random variables $\left(\frac{S_n}{\sqrt{\alpha_n}}\right)_{n\in\N}$.  {By \eqref{hyp_esp} applied to $f(x)=x$, the Markov property applied at time $k$ and the boundedness of the $Y_i$'s, $\E_\sigma[Y_kY_j]=O(\Phi(j-k))$. The verification of item (a) follows.}
 
  {We now need to check item (b).}
Let $\lambda\in\R$ and $A_n=\left(i\lambda-\frac{S_n}{\sqrt{\alpha_n}}\right)e^{i\lambda\frac{S_n}{\sqrt{\alpha_n}}}$. To prove that $\E_{\sigma}\left[A_n\right]$ goes to  $0$ we part $A_n$ in three terms:
 \begin{align*}
 A_n^{(1)}&=i\lambda e^{i\lambda\frac{S_n}{\sqrt{\alpha_n}}} \left(1-\frac{1}{\alpha_n}\sum_{j=1}^n Y_j S_{j,n}\right),
 \\
 A_n^{(2)}&= {-}\frac{1}{\sqrt{\alpha_n}}e^{i\lambda\frac{S_n}{\sqrt{\alpha_n}}}\sum_{j=1}^n Y_j\left(1-e^{ {-}i\lambda\frac{S_{j,n}}{\sqrt{\alpha_n}}}-i\lambda\frac{S_{j,n}}{\sqrt{\alpha_n}}\right),
 \\
  A_n^{(3)}&= {-}\frac{1}{\sqrt{\alpha_n}}\sum_{j=1}^n Y_je^{i\lambda\frac{S_n-S_{j,n}}{\sqrt{\alpha_n}}} .
 \end{align*}
The decaying of the two first terms come easily from the estimates on the covariances.
 \begin{align*}
 \E_{\sigma}\left[ {\left|A_n^{(1)}\right|}^2\right]&=\lambda^2\Var_{\sigma}\left[ \frac{1}{\alpha_n}\sum_{j=1}^n Y_j S_{j,n}\right]=\frac{\lambda^2}{\alpha_n^2}\sum_{i,j=1}^n \Cov_{\sigma}\left[Y_i S_{i,n},Y_j S_{j,n}\right] 
 \\
&=\frac{\lambda^2}{\alpha_n^2}\sum_{i,j=1}^n~~ \sum_{\substack{k:|k-i|\leq \ell_n\\l:|l-j|\leq \ell_n}}\Cov_{\sigma}\left[Y_i Y_k,Y_j Y_l\right].
 \end{align*}
 
If $|i-j|\geq (c^*+2)\ell_n$, then w.l.o.g $i\leq k<j\leq l$ and $(j-k)\geq c^*\ell_n \geq c^*(l-j)$; so applying~\eqref{hyp_cov4_1}, we have that
 \begin{align*}
\Cov_{\sigma}\left[Y_i Y_k,Y_j Y_l\right]= O(\Phi(j-k)).
 \end{align*}
If we are not in the previous case then w.l.o.g $i\leq k\leq j<l$) and applying~\eqref{hyp_cov4_2}
then
\begin{align*}
|\Cov_{\sigma}\left[Y_i Y_k,Y_j Y_l\right]|= O(\Phi(l-j)).
\end{align*}
Consequently,
\begin{align*}
 \E_{\sigma}\left[\left|A_n^{(1)}\right|^2\right]
\leq C\frac{\lambda^2}{\alpha_n^2}\left(n^3 \sum_{m=\ell_n}^n \Phi(m)+ n\ell_n^2\sum_{m=0}^{\ell_n}\Phi(m)\right)=o(1).
 \end{align*}
Using the Taylor expansion of the exponential  {and the fact that $\E_\sigma[S^2_{j,n}]=O(\ell_n)$ thanks to \eqref{hyp_cov2},} we easily control the term $A_n^{(2)}$:
\[
 \E_{\sigma}\left[| A_n^{(2)}|\right]\leq\frac{ {C\lambda^2}}{\sqrt{\alpha_n}}\sum_{j=1}^n \E_{\sigma}\left[|Y_j|\cdot \frac{S_{j,n}^2}{\alpha_n}\right]\leq \frac{ {C\lambda^2}}{\sqrt{\alpha_n}}\cdot \frac{ {C_Y}}{\alpha_n}\cdot n\cdot {\ell_n}=O\left( {\frac{\ell_n}{\sqrt{n}}}\right)=o(1).
 \]

 To study $\E[A_n^{(3)}]$, Bolthausen uses the stationarity of the field but we do not have this property here. We need to analyse carefully this term as it is done in~\cite{GLM15}. First, using the boundedness of the variables $(Y_j)$ and the fact than $\ell_n\ll\sqrt{\alpha_n}$ we just need to prove that 
 \begin{align}\label{limA3}
 \lim_{n\to\infty}\frac{1}{\sqrt{\alpha_n}}\sum_{j=\ell_n}^{n} \E_{\sigma}\left[Y_je^{i\lambda\frac{S_n-S_{j,n}}{\sqrt{\alpha_n}}}\right]=0,~\forall \lambda \in \R.
 \end{align}
 The usefulness of considering the sum from $\ell_n$ instead of $0$ will appear later. We fix $M$ a number (which will eventually depend on $n$) and we develop the exponential in two parts (its partial sum and its remainder):
\begin{align*}
e^{i\lambda\frac{S_n-S_{j,n}}{\sqrt{\alpha_n}}} &=T_{j,n}^M(\lambda)+R_{j,n}^M(\lambda)~\text{ with } \left\{ 
\begin{array}{ll}
T_{j,n}^M(\lambda)&= \displaystyle \sum_{m=0}^M \frac{(i\lambda)^m}{m!} \left(\frac{S_n-S_{j,n}}{\sqrt{\alpha_n}}\right)^m
\\
R_{j,n}^M(\lambda)&= \displaystyle \sum_{m=M+1}^\infty \frac{(i\lambda)^m}{m!} \left(\frac{S_n-S_{j,n}}{\sqrt{\alpha_n}}\right)^m.
\end{array}
\right. 
\end{align*}
Let us first analyse the contribution of $T_{j,n}^M(\lambda)$ to~\eqref{limA3}:
\begin{align*}
\frac{1}{\sqrt{\alpha_n}}\sum_{j=\ell_n}^{n} \E_{\sigma}\left[Y_jT_{j,n}^M(\lambda)\right] &= \frac{1}{\sqrt{\alpha_n}}\sum_{j=\ell_n}^{n}\sum_{m=0}^M\frac{1}{m!}\left(\frac{i\lambda}{\sqrt{\alpha_n}}\right)^m \E_{\sigma}\Bigg[Y_j \Big(\sum_{\substack{1\leq i\leq n\\ |i-j| {>}\ell_n}} Y_i\Big)^m\Bigg] 
\\
&= \frac{1}{\sqrt{\alpha_n}}\sum_{j=\ell_n}^{n}\sum_{m=0}^M\frac{1}{m!}\left(\frac{i\lambda}{\sqrt{\alpha_n}}\right)^m \sum_{i_1,\ldots,i_m\in\tau_j^{(m)}} \E_{\sigma}\left[Y_j \prod_{k=1}^m  Y_{i_k}\right] 
\end{align*}
where $\tau_j^{(m)}=\big\{(i_1,\ldots,i_m)\in\{1,\ldots,n\}^m\text{ such that }\forall k,~|i_k-j|\geq \ell_n\big\}$.

\begin{lemma}\label{lem:combi} If $m\leq \frac{\log n}{7\log(c^*+1)}$, then for any $j\in\{\ell_n,\ldots,n\}$ and any $(i_1,\ldots,i_m)\in{\tau_j^{(m)}}$, 
\[
\E_{\sigma}\left[Y_j \prod_{k=1}^m  Y_{i_k}\right]=O(n^{C'_Y}\Phi(\ell_n)),
\] 
where $C'_Y=\frac{\log C_Y}{7\log c^*}$.
\end{lemma}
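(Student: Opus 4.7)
The plan is to exploit the gap of width at least $\ell_n$ that isolates $Y_j$ from every other factor $Y_{i_k}$ in the product, via iterated Markov property (hypothesis~\ref{hyp_markov}) together with the mixing estimate~\eqref{hyp_espF}. Sort the indices as $k_0<k_1<\dots<k_m$ with $k_q=j$, and split $\prod_r Y_{k_r}=W_-\cdot Y_j\cdot W_+$ with $W_-=\prod_{r<q}Y_{k_r}$ and $W_+=\prod_{r>q}Y_{k_r}$. By assumption, all indices in $W_-$ are $\leq j-\ell_n$ and those in $W_+$ are $\geq j+\ell_n$, so $W_-$ is $\mathcal F_{j-\ell_n}$-measurable.

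The first step is to apply the Markov property at time $j-\ell_n$ to write
\[
\E_\sigma[W_-Y_jW_+]=\E_\sigma\Bigl[W_-\cdot\E_{\sigma_{j-\ell_n}}\bigl[Y_{\ell_n}\widetilde W_+\bigr]\Bigr],
\]
where $\widetilde W_+$ is $W_+$ with indices shifted by $-(j-\ell_n)$, putting its smallest index at $\geq 2\ell_n$. Writing the inner expectation as $\E_{\sigma_{j-\ell_n}}[F(X_{\ell_n},\ldots,X_{k_m-j+\ell_n})]$ for a bounded product function $F$ with $\|F\|_\infty\leq C_Y^{m+1}$, hypothesis~\eqref{hyp_espF} lets us replace $\sigma_{j-\ell_n}$ by $\nu$ with error $O(C_Y^{m+1}\Phi(\ell_n))$, uniformly in $\sigma_{j-\ell_n}$, provided the starting-to-window ratio is right (i.e.\ $\ell_n\geq c^*(k_m-j+1)$). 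When the window is too wide, the procedure is iterated: $\widetilde W_+$ is decomposed along its internal gaps into successive sub-blocks, each peeled off by a further Markov step and bounded via~\eqref{hyp_espF}. The necessary number of iterations is $O(m)$ and each contributes a multiplicative $C_Y^{O(1)}$ factor.

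After the decoupling, the quantity reduces to $\E_\nu[F]\cdot\E_\sigma[W_-]$ plus controlled errors. The remaining $\nu$-expectation still involves an isolated $Y_{\ell_n}$ separated by a gap $\geq\ell_n$ from the rest; a decoupling property under $\nu$ can be derived from~\eqref{hyp_espF} by approximating $\nu$-expectations by $\sigma$-expectations at arbitrarily large shifts and controlling the resulting $\sigma$-covariance with a further Markov step. Since $\E_\nu[Y_i]=0$ for all $i$, this yields $\E_\nu[F]=O(C_Y^{m}\Phi(\ell_n))$. Gathering everything, $|\E_\sigma[Y_j\prod_kY_{i_k}]|=O(C_Y^{O(m)}\Phi(\ell_n))$, and the hypothesis $m\leq\log n/(7\log(c^*+1))$ absorbs $C_Y^{O(m)}$ into $n^{C'_Y}$. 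The main difficulty will be the bookkeeping of the iterated decoupling: ensuring that the sub-windows always fall within the regime of applicability of~\eqref{hyp_espF}, and tracking the accumulation of $C_Y^{\text{const}}$ factors precisely enough to fit inside the polynomial budget $n^{C'_Y}$.
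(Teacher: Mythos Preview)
Your proposal is on the right track and shares the paper's core mechanism: iterated Markov steps combined with hypothesis~\eqref{hyp_espF} to peel off clusters of $Y$-variables to the right of $j$, until an isolated $Y$ remains and \eqref{hyp_esp} together with $\E_\nu[Y_1]=0$ supplies the factor $\Phi(\ell_n)$. The structural difference is that the paper keeps $W_-$ and $Y_j$ together throughout: after the clusters in $W_+$ are stripped off (each contributing a bounded $\E_\nu[\cdot]$ factor and an error), one is reduced to $\E_\sigma[W_-Y_j]$, and a single Markov step at $\max(W_-)$ then isolates $\E_{\sigma'}[Y_{j-\max(W_-)}]=O(\Phi(\ell_n))$. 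Your route---conditioning first at $j-\ell_n$ to split off $W_-$, and then invoking a decoupling under $\nu$ to bound $\E_\nu[Y_jW_+]$---can be made to work, but is more circuitous: the $\nu$-decoupling you describe requires passing back to $\sigma$-expectations via~\eqref{hyp_espF} and re-running the same peeling there, so nothing is gained over the paper's direct approach.

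The point you flag as ``the main difficulty'' but do not resolve is in fact the heart of the paper's proof. Saying that $\widetilde W_+$ is ``decomposed along its internal gaps'' is not enough: an arbitrary gap between consecutive $i_k$'s need not satisfy the ratio condition $k>c^*n$ required by~\eqref{hyp_espF}, since the block to its right may be arbitrarily wide. The paper's device is to use thresholds $(c^*+1)^{m-k}\sqrt{\ell_n}$ for the successive gaps: one looks for the rightmost position $k^*$ where the gap $i_{k^*+1}-i_{k^*}$ exceeds its threshold; by maximality, the remaining block $\{i_{k^*+1},\dots,i_m\}$ then has total width at most $\sum_{r\geq 2}(c^*+1)^{m-r}\sqrt{\ell_n}\leq (c^*+1)^{m-2}\sqrt{\ell_n}\ll\ell_n$ (this is precisely where the hypothesis $m\leq\log n/(7\log(c^*+1))$ enters), so that the ratio condition for~\eqref{hyp_espF} holds at the cut. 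Without this geometric scheme on the $\sqrt{\ell_n}$ scale, the iterated decoupling cannot be carried out.
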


\begin{proof} Let $m\leq  \frac{\log n}{7\log  {(c^*+1)}}$, $j\in\{\ell_n,\ldots,n\}$ and $(i_1,\ldots,i_m)$ such that $\forall k,~|i_k-j|\geq \ell_n$. We have several cases

\begin{enumerate}
\item if $i_m\leq j-\ell_n$, applying Markov property at time $i_m$ and hypothesis~\eqref{hyp_esp}, we have that
\begin{align*}
\left|\E_{\sigma}\left[Y_j \prod_{k=1}^m  Y_{i_k}\right]\right|\leq\E_{\sigma}\left[\prod_{k=1}^{m-1}|Y_{i_k}|\cdot
\left| \E_{\sigma_{i_m}}\left[Y_{j-i_m}\right] \right|\right] =  \cdot O(C_Y^m\Phi(j-i_m))
= O(n^{C'_Y}\Phi(\ell_n)).
\end{align*}
\item if there exists $b\leq m-1$ such that $i_b\leq j-\ell_n<j<j+\ell_n\leq i_{b+1}$ then we distinguish two cases:
\begin{enumerate}
\item\label{condition_lemmecombi} If, for all $k\geq b+2$, $i_k-i_{k-1}\leq (c')^{m-k} \sqrt{\ell_n}$ (with $c'=c^*+1$) then $ c^*(i_m-i_{b+1})\leq c^*(1+c'+\cdots+(c')^{m-3})\sqrt{\ell_n}\leq (c')^{m-2}\sqrt{\ell_n} {\ll} \ell_n \leq i_{b+1}-j$. So, using Markov property at time ${j}$ and hypothesis~\eqref{hyp_espF}, we have that
\begin{align*}
\E_{\sigma}\left[Y_j\prod_{k=1}^m Y_{i_k}\right]
&=\E_{\sigma}\left[Y_j\prod_{k=1}^b Y_{i_k}
\E_{\sigma_{t_j}}\left[
Y_{i_{b+1}-j}\ldots Y_{i_{m}-j}\right] \right] 
\\
&=\E_{\sigma}\left[ Y_j\prod_{k=1}^b Y_{i_k}\right]\left( {\E_\nu}\left[
 Y_{1}\ldots Y_{i_{m}-i_{b+1}}\right]+O\left(C_Y^{m-(b+1)}\Phi(i_{b+1}-j)\right)\right)
\\
% = C_Y^{b}\Phi(\ell_n) (C_Y^{m-(b+1)}+ O\left(C_Y^{m-(b+1)}\Phi(i_{b+1}-j)\right))
&=O(n^{C'_Y}\Phi(\ell_n)),
\end{align*}
where the last equality comes from the first case where all the indices are smaller that $j-\ell_n$.% {and the property $| {\E_\nu}[Y_{i_1}\ldots Y_{i_n}]|\leq C_Y^n$ which can be deduced from hypothesis \eqref{hyp_espF}}.
\item Else, we denote by $k^*=\max \{k\geq b+1 :i_{k+1}\geq i_k +(c')^{m-k-1}\sqrt{\ell_n}\}$. 
Then $c^*(i_m-i_{k^*+1})\leq c^*(1+c'+\cdots+(c')^{m-k^*-2})\leq (c')^{m-k^*-1}\sqrt{\ell_n} \leq i_{k^*+1}-i_{k^*}$. 
Applying Markov property at time $i_{k^*}$  
\begin{align*}
\E_{\sigma}\left[Y_j\prod_{k=1}^m Y_{i_k}\right] &=\E_{\sigma}\left[ Y_j\prod_{k=1}^{k^*} Y_{i_k}\right]\left( {\E_\nu}\left[
 Y_{1}\ldots Y_{i_{m}-i_{k^*+1}}\right]+O\left(C_Y^{m}\Phi(i_{k^*+1}-i_{k^*})\right)\right).
\end{align*}
If all the indices between $b+1$ and $k^*$ satisfy~\eqref{condition_lemmecombi} then we are reduced to the previous case. If not, we iterate until we are reduced to the previous case or to the first case (all indices smaller than $j-\ell_n$).
\end{enumerate}
\item If $i_1\geq j+\ell_n$, then we do the same computation as in the previous case with $b=0$ and we conclude using that $\E_{\sigma}[Y_j]=O(\Phi(j))=O(\Phi(\ell_n))$ because $j\geq \ell_n$.\qedhere
\end{enumerate}
\end{proof}

We choose $M=\frac{\log n}{7\log(c^*+1)}$, we use Lemma~\ref{lem:combi} and thanks to the hypotheses~\eqref{hyp_Phi} on $\Phi$ we get that, for every $\lambda\in\R$,
\begin{align*}
\left|\frac{1}{\sqrt{\alpha_n}}\sum_{j=\ell_n}^{n} \E_{\sigma}\left[Y_jT_{j,n}^M(\lambda)\right]\right| &\leq \frac{1}{\sqrt{\alpha_n}}\cdot n\cdot n^{\log n}\cdot O(n^{C'_Y}\Phi(\ell_n))\cdot e^{\frac{|\lambda|}{\sqrt{\alpha_n}}}=o(1).
\end{align*}
%\[ \lim_{n\to\infty}\frac{1}{\sqrt{\alpha_n}}\sum_{j=\ell_n}^{n} \E\left[Y_jT_{j,n}^M(\lambda)\right]=0.\]

We now have to understand the contribution of $R_{j,n}^M(\lambda)$ to~\eqref{limA3}. We fix $L$ a number (which will eventually depend on $n$).

\begin{align*}
R_{j,n}^M(\lambda) {=\tilde{R}_{j,n}^M(\lambda)+\bar{R}_{j,n}^M(\lambda)}&= \sum_{m=M+1}^\infty \frac{(i\lambda)^m}{m!} \left(\frac{S_n-S_{j,n}}{\sqrt{\alpha_n}}\right)^m\1_{\left\{ \big|\frac{S_n-S_{j,n}}{\sqrt{\alpha_n}}\big|\leq L\right\} }
\\
&~~+\sum_{m=M+1}^\infty \frac{(i\lambda)^m}{m!} \left(\frac{S_n-S_{j,n}}{\sqrt{\alpha_n}}\right)^m\1_{\left\{ \big|\frac{S_n-S_{j,n}}{\sqrt{\alpha_n}}\big|>L\right\} }.
\end{align*}
Using classical formulas on the remainder of the exponential series (and Cauchy-Schwarz inequality for the second inequality), we obtain that
\begin{align}
\frac{1}{\sqrt{\alpha_n}}\sum_{j=\ell_n}^{n} \E_{\sigma}\left[Y_j\tilde{R}_{j,n}^M(\lambda)\right]&\leq  \frac{n}{\sqrt{\alpha_n}}\cdot C_Y\cdot \frac{|\lambda|^{M+1}L^{M+1}}{M!}e^{|\lambda|L}, 
\label{eqR1}
\\
\frac{1}{\sqrt{\alpha_n}}\sum_{j=\ell_n}^{n} \E_{\sigma}\left[Y_j\overline{R}_{j,n}^M(\lambda)\right]&\leq \frac{n}{\sqrt{\alpha_n}}\cdot C_Y\cdot \max_j \E_{\sigma}\left[e^{2|\lambda| \frac{|S_n-S_{j,n}|}{\sqrt{\alpha_n}}}\right]^{1/2}\P_{\sigma}\left( \big|\frac{S_n-S_{j,n}}{\sqrt{\alpha_n}}\big| >L\right)^{1/2}.
\label{eqR2}
\end{align}
The right-hand side of \eqref{eqR1} will be taken care of by an appropriate choice of $L$. The right-hand side of \eqref{eqR2} calls for a better understanding of $S_n$ and $S_{j,n}$. 
\begin{lemma}\label{lem:moment_expo_sn} There exists $c>0$ such that for any $n$ large enough and any $\beta=O(\sqrt{n}\ell_n^{-1})$,
\[
 \E_{\sigma}\left[e^{\beta \frac{|S_n|}{\sqrt{\alpha_n}}}\right]\leq 2e^{c\beta^2}.
 \] 
\end{lemma}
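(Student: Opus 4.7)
The plan is to decompose $S_n$ via the Poisson equation associated with the Markov chain $(\sigma_t)$ and then to apply Azuma--Hoeffding to two resulting bounded martingales. Set $\lambda = \beta/\sqrt{\alpha_n}$: from the identity $\alpha_n = \Var_\sigma[S_n](1+o(1))$ established earlier in the proof of Theorem~\ref{thm:TCLgal}, together with the variance convergence $\Var_\sigma[S_n]/n \to s^2$ (the lemma being only invoked in the non-degenerate case $s^2 > 0$, since the case $s^2 = 0$ was handled by Chebyshev), one has $\alpha_n \ge c_0 n$ for some $c_0 > 0$ and $n$ large, so $\lambda = O(1/\ell_n)$ uniformly in $\sigma$. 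Since $e^{\lambda|S_n|} \le e^{\lambda S_n} + e^{-\lambda S_n}$, it is enough to control $\sup_\sigma \E_\sigma[e^{\pm\lambda S_n}]$ by $e^{c\beta^2}$ up to an absolute constant; the specific constant $2$ in the statement then follows by adjusting $c$ and treating separately the bounded regime $\beta = O(1)$.

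For the decomposition, denote by $P^m f(\sigma') := \E_{\sigma'}[f(\sigma_m)]$ the iterated Markov kernel of $(\sigma_t)$, and define $h(\sigma') := \E_{\sigma'}[Y_1]$ so that the Markov property hypothesis of Theorem~\ref{thm:TCLgal} gives $\E[Y_k \mid \mathcal F_{k-1}] = h(\sigma_{k-1})$. Set $g := \sum_{m \ge 0} P^m h$, i.e., $g(\sigma') = \sum_{m \ge 0} \E_{\sigma'}[Y_{m+1}]$. Since $\E_\nu[Y_1] = \E_\nu[X_1] - v = 0$, Hypothesis~\eqref{hyp_esp} applied to $f(x)=x$ gives $|\E_{\sigma'}[Y_{m+1}]| \le C\Phi(m+1)$, and the summability of $\Phi$ (granted by the decay hypothesis on $\Phi$) ensures $\|g\|_\infty \le C_g$ for some constant $C_g$. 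The Poisson equation $g - Pg = h$ is immediate, and a telescopic rearrangement produces
\[
S_n = \sum_{k=1}^n M_k + \sum_{k=1}^n N_k + g(\sigma_0) - g(\sigma_n),
\]
where $M_k := Y_k - h(\sigma_{k-1})$ and $N_k := g(\sigma_k) - \E[g(\sigma_k) \mid \mathcal F_{k-1}]$ are martingale differences with $|M_k| \le 2 C_Y$ and $|N_k| \le 2 C_g$.

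Applying Hoeffding's lemma conditionally and iterating yields $\E_\sigma[e^{\mu\sum M_k}] \le e^{2 C_Y^2 \mu^2 n}$ and $\E_\sigma[e^{\mu \sum N_k}] \le e^{2 C_g^2 \mu^2 n}$ for every $\mu \in \R$. Combining these via Cauchy--Schwarz with the deterministic estimate $|g(\sigma_0)-g(\sigma_n)| \le 2C_g$ gives
\[
\E_\sigma[e^{\lambda S_n}] \le \exp\bigl( 2 C_g \lambda + C_\star \lambda^2 n \bigr)
\]
for some constant $C_\star > 0$. Substituting $\lambda = \beta/\sqrt{\alpha_n}$ and using $\alpha_n \ge c_0 n$, the exponent is $O(\beta/\sqrt n) + O(\beta^2)$; on the range $\beta = O(\sqrt n/\ell_n)$ the linear term is $O(1/\ell_n) = o(1)$, so for $n$ large it is dominated by $\log 2 + c\beta^2$ for a suitable $c > 0$. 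The same estimate holds for $\E_\sigma[e^{-\lambda S_n}]$, and summing gives the claimed bound.

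The main obstacle is the construction of a solution $g$ to the Poisson equation that is bounded uniformly in $\sigma$: this rests precisely on the summability of $\sum_m \sup_{\sigma'}|\E_{\sigma'}[Y_{m+1}]|$, for which Hypothesis~\eqref{hyp_esp} combined with the rapid decay of $\Phi$ is tailor-made. Once $g$ is bounded, both martingales $\sum M_k$ and $\sum N_k$ have bounded increments uniformly in the initial configuration, and Azuma--Hoeffding closes the argument in a standard way. Were $\Phi$ not summable, the same scheme could still be carried out via a block decomposition leveraging Hypothesis~\eqref{hyp_espF}, at the cost of a noticeably heavier combinatorial analysis.
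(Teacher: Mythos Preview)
Your argument is correct and takes a genuinely different route from the paper. The paper partitions $\{1,\dots,n\}$ into blocks of length $\ell_n$, uses Hypothesis~\eqref{hyp_espF} and a Taylor expansion to bound the conditional exponential moment of each block by $1+c\beta^2\ell_n/n$, iterates over families of blocks separated by at least $c^*\ell_n$ (so that the near-independence kicks in), and finally combines the $\lceil c^*\rceil$ families via H\"older. Your approach instead exploits the Markov structure directly: since Hypothesis~\eqref{hyp_esp} makes $\Phi$ summable, the Poisson equation $g-Pg=h$ has a bounded solution, which lets you write $S_n$ as the sum of two bounded martingale increments plus a bounded boundary term, and Azuma--Hoeffding finishes in one stroke. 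This is cleaner, uses only \eqref{hyp_esp} rather than the stronger \eqref{hyp_espF}, and in fact yields $\E_\sigma[e^{\lambda S_n}]\le e^{O(\lambda)+C_\star\lambda^2 n}$ for \emph{all} $\lambda$, not just $\lambda=O(\ell_n^{-1})$. The paper's block method, by contrast, is agnostic to the underlying Markov chain and would still work if only the mixing estimates on the $X_i$'s were available.

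One cosmetic point: your boundary term contributes a factor $e^{2C_g\lambda}$ to each of $\E[e^{\pm\lambda S_n}]$, so after summing you get $2e^{2C_g\lambda}e^{c\beta^2}$ rather than $2e^{c\beta^2}$; on the stated range $2C_g\lambda=O(1/\ell_n)=o(1)$, so the prefactor is $2(1+o(1))$ rather than exactly $2$. This is harmless for the application (only a bound of the form $Ce^{c\beta^2}$ is ever used downstream), and the paper's own proof is equally informal on this point.
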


\begin{proof}
 {For $A\subset\{1,\ldots,n\}$, let $Z_A=\sum_{k\in A}Y_k$. It is immediate that $|Z_A|\leq C_Y|A|$.}

We partition the interval $\{  {1},\ldots,n\}$ into blocks $B$ of $\ell_n$ successive integers. Let $B$ be a block such that the smaller index $s_B$ of $B$ is larger than $ c^* \ell_n$. % and $Z_B$ the associate sum. The random variables $(Y_i)_i$ being bounded by $C_Y$, we have that
%\[
%\frac{|Z_B|}{\sqrt{n}}\leq C_Y\ell_n n^{-1/2}.
%\] 
Let $t\in \{0,\cdots, s_B- c^* \ell_n\}$. We can apply hypothesis~\eqref{hyp_espF} with the function $F:(x_1,\ldots,x_{\ell_n})\mapsto \exp(\frac{\beta\sum x_i {\wedge (\ell_nC_Y)}}{\sqrt{n}})$ (and the parameter $k=s_{B}-t>c^*\ell_n$):
\begin{align*}
\E_{\sigma}\left[\exp\left(\frac{\beta Z_{B}}{\sqrt{n}}\right)\big| \mathcal{F}_{t}\right]&=\E_{\sigma_{t}}\left[\exp\left(\frac{\beta Z_{B-t}}{\sqrt{n}}\right)\right]
\\
&= {\E_\nu}\left[\exp\left(\frac{\beta Z_{B}}{\sqrt{n}}\right)\right]+O(\|F\|_\infty\Phi(s_{B}-t))
\\
&= {\E_\nu}\left[\exp\left(\frac{\beta Z_{B}}{\sqrt{n}}\right)\right]+O(\exp(\beta C_Y \ell_n n^{-1/2})\Phi(\ell_n))
\end{align*}
where $Z_{B-t}$ designates the sum of $\ell_n$ variables starting from $s_B-t$. Furthermore, we have that
\begin{align*}
 {\E_\nu}\left[\exp\left(\frac{\beta Z_{B}}{\sqrt{n}}\right)\right]&= {\E_\nu}\left[1+\frac{\beta Z_{B}}{\sqrt{n}}+\frac{\beta^2 Z_{B}^2}{2n}+O\left(\frac{\beta^3 Z_{B}^3}{n^{3/2}}\right)\right]
\\
&=1+\frac{\beta^2}{2n} {\E_\nu}\left[Z_{B}^2\right]+O\left(\beta^3 \ell_n n^{- {3/2}}  {\E_\nu}\left[Z_{B}^2\right]\right).
\end{align*}
 {Equation~\eqref{hyp_cov2}} tells us that $ {\E_\nu}[Z_{B}^2]=O(\ell_n)$ so%, if $\beta= \frac{\sqrt{n}}{\ell_n}$ then 
\begin{align}
\label{borne1bloc}
\E_{\sigma}\left[\exp\left(\frac{\beta Z_{B}}{\sqrt{n}}\right)\big| \mathcal{F}_{t}\right]\leq 1+c\frac{\beta^2\ell_n}{n}.
\end{align}

Now we consider $\mathcal{B}$ (and $S_{\mathcal{B}}$ the associate sum) a set of blocks such that the distance between two blocks $B$ and $B'$ of $\mathcal B$ is larger than $ c^* \ell_n$; in other words, if $t_B$ is the maximal index in $B$ and $s_{B'}$ is the minimal one in $B'$ then $s_{B'}-t_B >  c^* \ell_n$. Let $B$ and $B'$ be two such blocks of $\mathcal B$. Markov property and Equation~\eqref{borne1bloc} (with $t=t_B$) implies that
\begin{align*}
\E_{\sigma}\left[\exp\left(\beta \frac{Z_{B}+Z_{B'}}{\sqrt{n}}\right)\right]&= \E_{\sigma}\left[\exp\left(\frac{\beta Z_{B}}{\sqrt{n}}\right)\E_{\sigma_{t_B}}\left[\exp\left(\frac{\beta Z_{B'}}{\sqrt{n}}\right)\big| \mathcal{F}_{t_B}\right]\right]\\
& {\leq}\E_{\sigma}\left[\exp\left(\frac{\beta Z_{B}}{\sqrt{n}}\right)\right]\left(1+c\frac{\beta^2\ell_n}{n}\right).
\end{align*} 
By definition of $\mathcal B$ we clearly have that $|\mathcal B|\leq \frac{n}{ \ell_n}$ so after iteration, we conclude that
\[
\E_{\sigma}\left[\exp\left(\beta \frac{S_{\mathcal B}}{\sqrt{n}}\right)\right]\leq \left(1+c\frac{\beta^2\ell_n}{n}\right)^{|\mathcal B|}\leq \exp\left(\frac{c\beta^2 \ell_n  n}{ n \ell_n}\right)\leq \exp(c'\beta^2).
\]
Let us go back to the whole sum $S_n$; we can write $S_n=Z_0+S_{\mathcal B_1}+\ldots+S_{\mathcal B_{\lceil  c^* \rceil}}$ where the $\mathcal B_i$ are disjoint sets of blocks with the same property as $\mathcal B$ and $Z_0$ is the sum of the $ c^*  \ell_n$ first terms.  {By H\"{o}lder's inequality}, we have that
\[
\E_{\sigma}\left[\exp\left(\beta \frac{S_{n}}{\sqrt{n}}\right)\right]\leq \E_{\sigma}\left[\exp\left(\frac{\beta Z_0}{\sqrt{n}}\right)\right]
\prod_{i=1}^{\lceil  c^* \rceil} \left(\E_{\sigma}\left[\exp\left(\beta \frac{S_{\mathcal B_i}}{\sqrt{n}}\right)^{\lceil  c^* \rceil}\right]\right)^{1/\lceil  c^* \rceil}\leq \exp(c''\beta^2)
\]
where we use that $\E_{\sigma}\left[\exp\left(\frac{\beta Z_0}{\sqrt{n}}\right)\right]\leq 1+\frac{\beta C_Y \ell_n}{\sqrt{n}}$. 
We could proceed similarly to control $\E_{\sigma}\left[\exp\left(-\beta \frac{S_{n}}{\sqrt{n}}\right)\right]$ and finally $\E_{\sigma}\left[\exp\left(\beta \frac{|S_{n}-S_{j,n}|}{\sqrt{n}}\right)\right]$.%; we conclude thanks to the fact that $\alpha_n\approx n$.
 \end{proof} 
 
So, if $L=O(\sqrt{n}\ell_n^{-1})$ then we can apply Lemma~\ref{lem:moment_expo_sn} with $\beta=\epsilon L$ (with $\epsilon$ small enough) and use the exponential Chebychev inequality to obtain that 
\begin{align}\label{eq:chebychev_expo}
\P_{\sigma}\left( \big|\frac{S_n-S_{j,n}}{\sqrt{\alpha_n}}\big| >L\right)\leq e^{-\epsilon L^2} \E_{\sigma}\left[\exp\left(\beta\big|\frac{S_n-S_{j,n}}{\sqrt{\alpha_n}}\big|\right) \right] \leq e^{-\epsilon'L^2}.
\end{align}

We recall that we have previously chosen $M=\frac{\log n}{7\log(c^*+1)}$. Now we choose \[L=\frac{\log n}{7\log(c^*+1)\cdot(10\wedge(|\lambda|+2))}\] and we use Lemma~\ref{lem:moment_expo_sn} and Equation~\eqref{eq:chebychev_expo} in Equation~\eqref{eqR2} to conclude that 
%DETAILS ?
\[\lim_{n\to\infty}\frac{1}{\sqrt{\alpha_n}}\sum_{j=\ell_n}^{n} \E_{\sigma}\left[Y_jR_{j,n}^M(\lambda)\right]=0.\]
Putting everything together we conclude the proof for bounded random variables $Y_i$. We use a classical truncation argument to prove the result for non bounded random variables: let $T^N(x)$ be the truncation operator and $R^N(x)$ the remainder:
\begin{align*}
T^N(x)&=\max\{\min\{x,N\},-N\}
\\
R^N(x)&=x-T^N(x).
\end{align*}
Thanks to~\eqref{hyp_cov2}  we have that
\[
\E_{\sigma}\left[\left(\frac{\displaystyle \sum_{i=1}^n\left( R^{N}(Y_i)-\E_{\sigma}\left[R^{N}(Y_i)\right]\right)}{\sqrt{n}}\right)^2\right]=\frac{1}{n}\sum_{i,j}^n \Cov_{\sigma}\left[R^{N}(Y_i),R^{N}(Y_j)\right]
\] converges to $0$ as $N$ goes to $\infty$ uniformly in $n$. Using moreover that $\Var_{\sigma}\left[R^{N}(Y_i)\right]\to 0$ as $N$ goes to infty, uniformly in $i$, we can conclude that the central limit theorem is still valid for the unbounded variables.
\end{proof}

\section{Contact process estimates}\label{annexe_cp}
In the classical contact process introduced by Harris in~\cite{harris74} (with the convention chosen in section~\ref{section_pc}), zeros flip to ones at rate 1 without any constraint, ones flip to zeros at rate $\lambda>0$ if there is exactly one nearest neighbor zero and at rate $2\lambda$ if  the two nearest neighbors are zeros. Let $(\eta_t^A)$ be such a contact process starting from a non empty finite set $A$ of infected particles, that is, the particles in the set $A$ have value $0$ and the other ones have value $1$. We define 
$\tau^A=\inf\{t>0:\eta_t^A\equiv 1\}$ its extinction time and we denote by $X(\eta_t^A)$ the position of leftmost $0$, \textit{i.e.} the leftmost infected point, at time $t$.

In~\cite{DG83} we can find the following estimates:
\begin{theorem}[Durrett-Griffeath, 1983]\label{DGestimates}
For the contact process with infection rate $\lambda$, there exists $\lambda_c\in(0,\infty)$ such that for $\lambda>\lambda_c$ the followings hold. Firstly, there exist $C_1$ and $C_2$ such that for $A$ non empty finite set and $t\geq 0$
\begin{align}
\label{survival}\P(\tau^A=\infty)&>0\\
\label{survival_initial}\P(\tau^A<\infty)&\le C_1 e^{-C_2 |A|}\\
\label{small_cluster}\P(t<\tau^{\{0\}}<\infty)&\leq C_1 e^{-C_2 t}.
\end{align}
Secondly, there exists $v_{cp}$ such that for every $a<v_{cp}$, $t\ge 0$ and $|x|<v_{cp} t$
\begin{align}
\label{atleastlinear}\P(X(\eta_t)>-at|\tau^{\{0\}}=\infty)&\leq C_1 e^{-C_2 t}\\
\label{coupled_region}   \P(\eta_t^{\{0\}}(x)\neq \eta_t^{\Z}(x)|\tau^{\{0\}}=\infty)&\le C_1 e^{-C_2 t}.
\end{align}
\end{theorem}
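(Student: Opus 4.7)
All these estimates are classical consequences of Harris' graphical construction combined with a block/renormalization argument à la Bezuidenhout--Grimmett comparing the supercritical contact process with supercritical $1$-dependent oriented percolation on $\Z^2$. The plan is to establish the geometric object (the edge speed and the finite ``block event'') first, and to deduce each of \eqref{survival}--\eqref{coupled_region} from it.

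First I would fix $\lambda>\lambda_c$ and introduce the graphical construction on $\Z\times\R_+$. Self-duality of the contact process gives, for $A,B\subset\Z$ finite, $\P(\eta_t^A\cap B\neq\emptyset)=\P(\eta_t^B\cap A\neq\emptyset)$, which immediately yields the existence of a nontrivial upper invariant measure $\bar\nu$ and hence \eqref{survival}: $\P(\tau^{\{0\}}=\infty)=\bar\nu(\{\eta(0)=0\})>0$. For \eqref{survival_initial}, one uses that the events $\{\tau^{\{x\}}<\infty\}$ for $x\in A$ are negatively correlated thanks to the FKG inequality applied to the graphical construction (viewing survival as a decreasing event in the ``recovery'' marks); thus $\P(\tau^A<\infty)\leq\prod_{x\in A}\P(\tau^{\{x\}}<\infty)\leq(1-\P(\tau^{\{0\}}=\infty))^{|A|}$, which is exponentially small in $|A|$.

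The core step, which I expect to be the main obstacle, is the \emph{block construction}: one shows that for $\lambda>\lambda_c$ one can find integers $L,T$ and a macroscopic space-time block $B\subset\Z\times\R_+$ such that, conditionally on having a single infected site in a prescribed left portion of $B$, with probability arbitrarily close to $1$ the infection at time $T$ fills two prescribed translates of that portion, at spatial shifts $\pm L$. This is the Bezuidenhout--Grimmett renormalization, and it is what allows one to dominate the surviving contact process from below by a supercritical oriented site percolation cluster with a $1$-dependent law. Once the block event is in place, the large-deviation estimates for supercritical oriented percolation transfer directly: \eqref{small_cluster} comes from the fact that conditional extinction after time $t$ forces all $\Theta(t/T)$ consecutive attempts of the block event to fail, which has probability $e^{-C_2 t}$; and \eqref{atleastlinear} follows from the shape theorem for the oriented percolation cluster, whose minimal open site grows linearly, together with large-deviation bounds on the same cluster's boundary.

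To obtain \eqref{coupled_region} I would use duality one more time. For a fixed $x$ with $|x|<v_{cp}t$, the event $\{\eta_t^{\{0\}}(x)\neq\eta_t^\Z(x)\}$ translates via duality into the event that the dual contact process started from $\{x\}$ and run up to time $t$ survives but does not reach $0$. Conditioning on survival of the primal process (which on the event of interest can be read off the dual's survival), the dual's leftmost and rightmost infected sites both move ballistically at speed $v_{cp}$ by \eqref{atleastlinear}, so for $|x|<v_{cp}t$ the probability of not touching $0$ by time $t$ is $\leq C_1 e^{-C_2 t}$, yielding \eqref{coupled_region}. Defining the edge speed $v_{cp}=\lim_t X(\eta_t^{(-\infty,0]})/t$ via Liggett's subadditive argument legitimizes the use of $v_{cp}$ throughout. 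The only real work is the block construction; everything else is bookkeeping on top of it.
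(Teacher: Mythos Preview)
The paper does not prove Theorem~\ref{DGestimates}; it is stated in Appendix~\ref{annexe_cp} as a citation of classical results from \cite{DG83}, so there is no proof in the paper to compare your proposal against. Your outline via a block/renormalization comparison with oriented percolation is a legitimate (and nowadays standard) route to these estimates, although the original 1983 proofs are one-dimensional and rely on the edge processes $r_t=\max\eta_t^{(-\infty,0]}$, $l_t=\min\eta_t^{[0,\infty)}$ rather than on Bezuidenhout--Grimmett, which came later.

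That said, your argument for \eqref{survival_initial} contains a genuine error. The events $\{\tau^{\{x\}}<\infty\}$, $x\in A$, are all \emph{decreasing} in the graphical construction (more infection arrows or fewer recovery marks make extinction less likely), and FKG gives \emph{positive} correlation for events of the same monotonicity:
\[
\P\Big(\bigcap_{x\in A}\{\tau^{\{x\}}<\infty\}\Big)\ \geq\ \prod_{x\in A}\P(\tau^{\{x\}}<\infty),
\]
which is the wrong direction for the product bound you want. There is no FKG-based shortcut here. A correct argument goes through self-duality: $\P(\tau^A<\infty)=\bar\nu(\eta\equiv 1\text{ on }A)$, and then one uses that the upper invariant measure $\bar\nu$ stochastically dominates a nontrivial product Bernoulli measure of infected sites (a fact which itself ultimately rests on the block construction). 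Alternatively, derive \eqref{survival_initial} directly from the oriented-percolation comparison you set up later: $\tau^A<\infty$ forces all $\Theta(|A|)$ seed blocks to be bad.

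Your sketch for \eqref{coupled_region} is also a bit loose. Via duality the event $\{\eta_t^{\{0\}}(x)\neq\eta_t^{\Z}(x)\}$ becomes ``the dual process from $x$ is nonempty at time $t$ but does not contain $0$ at time $t$'', and linear edge speed alone does not guarantee $0$ is actually occupied---you only know $0$ lies between the leftmost and rightmost dual-infected sites. In one dimension the missing ingredient is the deterministic coupled-region identity: on $\{\tau^{\{0\}}=\infty\}$, $\eta_t^{\{0\}}$ agrees with $\eta_t^{\Z}$ on the interval $[l_t,r_t]$, so \eqref{coupled_region} reduces to large deviations for $l_t,r_t$, which is \eqref{atleastlinear}. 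You should invoke that identity explicitly rather than appeal only to the edge speed.
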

From this theorem, we derive the following estimate (used in the proof of Corollary~\ref{cor:zerozone}). 

\begin{corollary}\label{zerosCP} For the threshold contact process defined in paragraph~\ref{section_pc}, if $q>\bar{q}:=\frac{2\lambda_c}{1+2\lambda_c}$ the following holds. For every $t>0$ and $l\in\N$, if $I_l$ is a subset of $[-v_{cp} t,v_{cp} t]$ with size $l$ (where $v_{cp}$ is the constant given by~Theorem~\ref{DGestimates}) then there exist $C_3,C_4$ such that
\begin{align}\label{eq_zerosCP}
\P\left(\forall x\in I_l,\eta_t^{\{0\}}(x)=1 |\tau^{\{0\}}=\infty\right)\leq  C_3 e^{-C_4 t\wedge l}.
\end{align}
\end{corollary}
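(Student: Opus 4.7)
The plan is to transfer the classical contact process estimates of Theorem~\ref{DGestimates} to the threshold CP via a domination coupling combined with a restart argument in the spirit of Lemma~\ref{couplage1}. After rescaling time by the factor $p$, the threshold process has recovery rate $1$ and $1\to 0$ rate $q/p$ whenever at least one neighbor is infected. Since $q/p>2\lambda_c$ by hypothesis, the classical CP of parameter $\lambda:=q/(2p)>\lambda_c$ is supercritical, and a rate-by-rate comparison yields a basic coupling of $\eta^{\mathrm{thr}}$ with such a classical CP $\eta^{\mathrm{cl}}$, both started from $\delta^{0}$, satisfying $\eta^{\mathrm{thr}}_t\leq\eta^{\mathrm{cl}}_t$ pointwise for every $t\geq 0$. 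Consequently $\{\tau^{\mathrm{cl}}=\infty\}\subset\{\tau^{\mathrm{thr}}=\infty\}$ and $\{\forall x\in I_l,\,\eta^{\mathrm{thr}}_t(x)=1\}\subset\{\forall x\in I_l,\,\eta^{\mathrm{cl}}_t(x)=1\}$.

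The main difficulty is that we condition on survival of the threshold, not of the dominated classical CP. To bridge this, I would apply verbatim the restart construction of Lemma~\ref{couplage1}, now with the classical CP of parameter $\lambda$ in the role of the process to be restarted and with the threshold providing the seeding zero (which exists whenever the threshold is still alive) each time the current classical CP dies. Since each restart survives with positive probability, this produces random $(T,Y)$ with exponentially decaying tails such that the shifted process $\eta^{\mathrm{cl}}_{T+\cdot}(Y+\cdot)$ is a surviving classical CP which still dominates the corresponding shift of $\eta^{\mathrm{thr}}$.

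On the event $\{T\leq c t,\,|Y|\leq a t\}$, whose complement contributes $O(e^{-c' t})$, one may choose $a,c$ small enough that $I_l-Y$ lies inside the coupled region $[-v_{cp}(t-T),v_{cp}(t-T)]$ of the restarted classical CP (up to absorbing a small $\varepsilon$ into the constants, so that the statement really controls $I_l\subset[-(v_{cp}-\varepsilon)t,(v_{cp}-\varepsilon)t]$). Applying~\eqref{coupled_region} site by site then lets us replace the restarted classical CP by the stationary process $\eta^{\mathrm{cl},\Z}_{t-T}$ at a union-bound cost $O(l\,e^{-c t})$, and attractiveness ensures that the law of $\eta^{\mathrm{cl},\Z}_s$ is stochastically dominated, for every $s\geq 0$, by the upper invariant measure $\bar\nu$ of the supercritical classical CP.

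It then remains to bound $\bar\nu(\forall x\in J,\,\eta(x)=1)\leq Ce^{-cl}$ for every interval $J$ of length $l$, which is a standard consequence of supercriticality (exponential decay of the probability of a macroscopic ``hole'' under $\bar\nu$, via block-construction / oriented-percolation comparison of the supercritical CP). Combining the three contributions $O(e^{-cl})$, $O(l\,e^{-c t})$ and $O(e^{-c' t})$, and dividing by the positive constant $\P(\tau^{\mathrm{thr}}=\infty)$, yields the announced $C_3\, e^{-C_4(t\wedge l)}$. The main obstacle is really the hole estimate on $\bar\nu$, which is classical but external to the present paper; the restart step itself requires no new ideas beyond Lemma~\ref{couplage1}.
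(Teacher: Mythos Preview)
Your argument is essentially correct, but it takes a considerably longer route than the paper and introduces machinery that is not needed.

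The paper's proof is three lines. After the one-line monotone reduction to the classical contact process with parameter $\lambda/2=q/(2p)>\lambda_c$, it uses \emph{self-duality} of the classical contact process: $\P(\forall x\in I_l,\ \eta^{\Z}_t(x)=1)=\P(\tau^{I_l}\le t)\le \P(\tau^{I_l}<\infty)\le C_1e^{-C_2 l}$, which is exactly~\eqref{survival_initial}. Then~\eqref{coupled_region} compares $\eta^{\{0\}}$ with $\eta^{\Z}$ inside $[-v_{cp}t,v_{cp}t]$, conditionally on survival, at cost $O(l\,e^{-C_2t})$. That is the whole proof.

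Two comments on your version. First, the ``hole estimate on $\bar\nu$'' that you flag as external is not: by duality, $\bar\nu(\eta\equiv 1\text{ on }I_l)=\P(\tau^{I_l}<\infty)$, which is precisely~\eqref{survival_initial}. So there is no need to pass through the upper invariant measure or to invoke block constructions; duality gives the bound for $\eta^{\Z}_s$ at every finite time~$s$ directly. Second, your restart argument is a legitimate way to convert the conditioning on $\{\tau^{\mathrm{thr}}=\infty\}$ into a conditioning on survival of a classical CP---and you are right that the paper's ``obvious monotonicity'' is terse on this point, since~\eqref{coupled_region} is stated only for the classical process. Your construction works (the unconditional exponential tails of $(T,Y)$ transfer to the conditioned measure by dividing by $\P(\tau^{\mathrm{thr}}=\infty)>0$), but it forces you to shrink $v_{cp}$ to $v_{cp}-\varepsilon$, which the paper's direct argument avoids. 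In short: the restart is careful but heavy, and the detour through $\bar\nu$ is unnecessary once you notice that~\eqref{survival_initial} already encodes the hole estimate via duality.
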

\begin{proof} By obvious monotonicity between threshold contact process and contact process (taking $\lambda=\frac{q}{p}$) it is enough to prove the result for the contact process with parameter $\lambda/2$, which satisfies $\lambda/2>\lambda_c$. Let $t>0$, $l\in\N$ and $I_l$ a set of size $l$.
Using duality and~\eqref{survival_initial} we can obtain:
\begin{align}\label{eq2}
\P\left(\forall x\in I_l, \eta_t^{\Z}(x)=1\right)\leq C_1 e^{-C_2 l}.
\end{align}
If $I_l$ is included in $[-v_{cp} t,v_{cp} t]$ then, using~\eqref{coupled_region} and~\eqref{eq2}, we have that
\begin{align*}
&\P\left(\forall x\in I_l:\eta_t^{\{0\}}(x)=1 |\tau^{\{0\}}=\infty\right)\\ 
&~~~~~\leq \P\left(\forall x\in I_l:\eta_t^{\Z}(x)=1 |\tau^{\{0\}}=\infty\right)+ \P\left(\exists x\in I_l:\eta_t^{\{0\}}(x)\neq \eta_t^{\Z}(x) \big|\tau^{\{0\}}=\infty\right)\\
&~~~~~\leq C_1 e^{-C_2 l} +2v_{cp} t C_1 e^{-C_2 t} \\ %because l\leq \alpha t
&~~~~~\leq  C_3 e^{-C_4 t\wedge l}.\qedhere
\end{align*}
\end{proof}
\end{appendices}

\subsection*{Acknowledgements}
This work has been supported by the ERC Starting Grant 680275 MALIG, by the ANR projects LSD (ANR-15-CE40-0020), MALIN (ANR-16-CE93-0003) and PPPP (ANR-16-CE40-0016), and by the LABEX MILYON (ANR-10-LABX-0070) of  Université de Lyon, within the program "Investissements d'Avenir" (ANR-11-IDEX-0007) operated by the French National Research Agency (ANR). The authors also want to thank Fabio Martinelli for useful discussions.

\bibliographystyle{alpha}
%\bibliography{biblio_fa1f}

\end{document}